\documentclass[12pt, a4paper, oneside, reqno]{amsart} 
\usepackage{cmap}
\usepackage[utf8]{inputenc}
\usepackage[english]{babel}
\usepackage{amssymb,amsfonts,amsmath,cite,enumerate,float,indentfirst, amsthm, mathrsfs}
\usepackage{chngcntr}

\newtheorem{theorem}{Theorem}[section]
\newtheorem{lemma}[theorem]{Lemma}
\newtheorem{proposition}[theorem]{Proposition}
\newtheorem{corollary}[theorem]{Corollary}

\theoremstyle{definition}
\newtheorem{definition}{Definition}[section]
\theoremstyle{definition}
\newtheorem{construction}{Construction}[section]

\theoremstyle{definition}
\newtheorem{remark}{Remark}[section]

\usepackage{microtype}
\usepackage{hyperref}
\usepackage{mathtext}
\usepackage{graphicx}
\usepackage{tikz}
\usepackage{wrapfig}
\usepackage{verbatim}
\usepackage{pgfplots}
\usepackage{subcaption}
\usepackage{url}
\tikzstyle{red dot}=[fill=red, draw=black, shape=circle]
\tikzstyle{green dot}=[fill=green, draw=white, shape=rectangle]
\tikzstyle{new style 2}=[ultra thick]
\tikzstyle{new style 1}=[dashed,-]
\tikzstyle{new style 3}=[- , draw=ultra thick]

\pgfplotsset{compat=1.9}
\makeatletter
\numberwithin{equation}{section}

\newcommand{\Mod}{\mathrm{Mod}}
\newcommand{\Homeo}{\mathrm{Homeo}}
\hypersetup{
 colorlinks=true,
 linkcolor=red,
 filecolor=magenta, 
 urlcolor=cyan,
 pdftitle={Overleaf Example},
 }
 
\begin{document}
       
\title[Simple Closed Geodesics on Hyperbolic Surfaces]{Lengths of Simple Closed Geodesics on Hyperbolic Surfaces in Prescribed Homology Classes}
\author[Igor Patsankov]{Igor M. Patsankov} 
\address{Lomonosov Moscow State University, Russia}
\address{Steklov Mathematical Institute of Russian Academy of Sciences, Moscow, Russia}
\email{ipatsankov@mail.ru}
\thanks{This work was supported by the Russian Science Foundation under grant no.~26-11-00052, \\
\url{https://rscf.ru/en/project/26-11-00052/}.}
\keywords {Hyperbolic surface, closed geodesics, mapping class group, cycle complex.}
\subjclass[2020]{57K20, 53C22, 51M10}
\date{\today}
\begin{abstract}
    A classical question in the theory of hyperbolic surfaces is the study of lengths of closed 
    geodesics under various constraints. A celebrated result in this area is 
    M. Mirzakhani's asymptotic formula for the number of simple closed geodesics of length $\le L$ 
    on a hyperbolic surface of genus $g$ with $n$ punctures. We investigate the number of simple 
    closed geodesics of length $\le L$ representing a fixed primitive nonzero homology class $x$ on a 
    hyperbolic surface $S$. We denote this number by $h_{S}(L, x)$. It follows from Mirzakhani's result 
    that $h_{S}(L, x) \le C L^{6(g-1) + 2n}$. However, numerical evidence suggests 
    that this bound is apparently not asymptotically sharp. We prove that for a surface $S$ of 
    genus $g$ with $n$ punctures and $b$ geodesic boundary components, under the condition that 
    $g \ge 1$ and $g+n+b \ge 3$, there exists a constant $C_1 > 0$ such that for sufficiently 
    large $L$ the inequality
    \[ h_{S}(L, x) \ge C_1 L^{6(g-1) + 2(n + b-1)} \]
    holds. In the special case of a torus with two punctures $S_{1, 2}$, we 
    obtain the following stronger result: there exists a constant $C_2 > 0$ such that for 
    sufficiently large $L$ the inequality
    \[ h_{S_{1, 2}}(L, x) \ge C_2 L^{3.011057 \ldots  } \]
    holds.
\end{abstract}

\maketitle

 \section{Introduction} 

    Let $h_{S}(L, x)$ be the number of oriented simple closed geodesics of length $\le L$
    on a hyperbolic surface $S$ which belong to a primitive nonzero homology class
    $x \in H_1(S, \mathbb{Z})$.
    Throughout this paper we assume that $S$ is an oriented finite-type surface
    with a complete finite-area metric and geodesic boundary components.
    The main goal of our work is to obtain lower bounds for the function $h_S(L,x)$.

    The problem of understanding the asymptotics of lengths of closed geodesics under various 
    constraints has a rich history.
    Thanks to the works of Delsarte, Huber and Selberg, it is known that  
    \[
        c_{S}(L) \sim e^L/L, 
    \]
    where $c_{S}(L)$ denotes the number of primitive closed geodesics of length
     $\le L$ on a hyperbolic surface $S$.
    For more details on this result, see \cite{Bus}.
    Analogous statements hold for the growth rate of the number of closed geodesics on compact manifolds 
    of negative curvature \cite{Ma}.

    Of particular interest are simple closed geodesics.
    In 2008, Mirzakhani published a remarkable paper \cite{Mirz} devoted to counting simple closed geodesics,
    in which she obtained an asymptotic formula for the number of simple closed geodesics of length at most 
    $L$ on hyperbolic surfaces.  In contrast to the previous result, Mirzakhani's formula depends on
    the topological type of the surface. 
    
    Let $S_{g, n}$ denote a hyperbolic surface of genus $g$ with $n$
    punctures, and let $s_{g, n}(L)$ be the number of simple closed geodesics of length $\le L$ on $S_{g, n}$.
    Then the main result of \cite{Mirz} states that
    \[
        s_{g, n}(L) \sim c L^{6g-6+2n}, 
    \]
    where the constant $c$ depends only on the hyperbolic metric and the topological type of the surface.

    Alongside the general problem of counting simple closed geodesics, Mirzakhani also solved the problem of 
    counting geodesics in individual orbits of the mapping class group $\mathrm{Mod}_{g, n}$.
     Consider the counting function
    \[
        s_{g, n}(L, \gamma) = \# \{\alpha \in \mathrm{Mod}_{g, n} \cdot \gamma \mid \ell_{\alpha} \le L \},
    \]  
    where $\gamma$ is a fixed simple closed curve on $S_{g, n}$, and $\ell_{\alpha}$ denotes
     the length of the geodesic representative in the free isotopy class of $\alpha$. 
     Since there are only finitely many mapping class group orbits of simple closed curves, 
     the counting function $s_{g, n}(L)$ decomposes as the finite sum
    \[
        s_{g, n}(L) = \sum_{\gamma} s_{g, n}(L, \gamma),
    \]
    where the sum runs over a set of representatives of the orbits. Mirzakhani proved that for any 
    simple closed curve $\gamma$, one has the asymptotic
    \begin{equation} \label{eq_Mirz}
        s_{g, n}(L, \gamma)\sim c_{\gamma} L^{6g-6+2n},
    \end{equation}
    where the constant $c_{\gamma}$ now depends on the topological type of the curve, as well as on the hyperbolic metric.

    A natural problem now arises: given a primitive homology class $x \neq 0$,
    what is the growth rate of the number of simple closed geodesics of length $\le L$ representing $x$?
    \begin{remark}
        In the context of the above-mentioned results of Delsarte, Huber, and Selberg 
        on not necessarily simple geodesics, an analogous problem was considered by 
        Phillips and Sarnak \cite{P_S}.
        As far as the author knows, for simple closed geodesics this question has 
        not been studied so far.
    \end{remark}
    It is well known that on closed surfaces, the homology classes representable by simple 
    closed non-separating curves are precisely the primitive ones, 
    i.e. those not divisible by any integer greater than $1$. 
    For non-closed surfaces, we need the following definition.

   \begin{definition}
    A homology class $x \in H_1(S, \mathbb{Z})$ is called \textit{strictly primitive} if its image 
    is primitive after capping off all boundary components and punctures with disks.
    \end{definition}
    It is easy to see that for non-closed surfaces, the homology classes representable by simple
    closed non-separating geodesics are exactly the strictly primitive classes. 
    For closed surfaces, the notions of primitive and strictly primitive coincide.

    Let $S_{g, n}^{b}$ denote a surface of genus $g$ with $n$ punctures and $b$ boundary components.
    Let $x \in H_{1}\left(S_{g, n}^{b},\mathbb{Z}\right)$ be a nonzero strictly primitive
    homology class.
    Let $X$ be a complete finite-area hyperbolic metric on $S_{g, n}^{b}$ such
    that the boundary components are geodesic.
    Using the above notation, we define the counting function
    \[
        h_{g,n}^{b}(L, x, X) = \# \{ \alpha \in x \mid  \ell_{\alpha}(X) \le L\}, 
    \] 
    where $\alpha \in x$ means that $\alpha$ is an oriented simple closed geodesic
    representing the class $x$.
    \begin{remark}
    In what follows, by a hyperbolic metric on $S_{g, n}^{b}$ we shall always mean a complete
    finite-area hyperbolic metric for which
    the boundary components are geodesic. Such a metric exists if
    and only if the Euler characteristic $\chi (S_{g, n}^{b}) < 0$.
    \end{remark}

    A natural first question is  whether the leading order of growth of the function $h$ depends on 
    the choice of metric $X$, 
    the strictly primitive nonzero homology class $x$, or the replacement of a geodesic boundary component by a puncture. 
    The answer to this question is given by our first result.
    \begin{theorem} \label{th_nezavisimost_description}
        Let $S_{g_1, n_1}^{b_1}$ and $S_{g_2, n_2}^{b_2}$ be surfaces such that 
        $b_1 + n_1 = b_2 + n_2$, let $X_1, X_2$ be hyperbolic metrics on 
        $S_{g_1, n_1}^{b_1}$, $S_{g_2, n_2}^{b_2}$, respectively,
        and let $x_1 \in H_{1}\left(S_{g_1, n_1}^{b_1}, \mathbb{Z}\right)$, $x_2 \in H_{1}\left(S_{g_2, n_2}^{b_2}, \mathbb{Z}\right)$ be strictly primitive homology classes.
        Then
        there exists a constant $c > 0$ such that for all sufficiently large $L$ the inequality
        \[ h_{g_2, n_2}^{b_2}\left(\frac{L}{c}, x_2,X_2\right) \le h_{g_1, n_1}^{b_1}\Big(L, x_1,X_1\Big) \le h_{g_2, n_2}^{b_2} \Big(c L, x_2,X_2 \Big) \]
        holds.
    \end{theorem}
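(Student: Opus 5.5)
The plan is to construct, for the two data sets, a homeomorphism $\phi\colon S_{g_1,n_1}^{b_1}\to S_{g_2,n_2}^{b_2}$ sending boundary components and punctures to \emph{ends} (a boundary component or a puncture) and satisfying $\phi_*(x_1)=x_2$. We would then show that $\phi$ distorts lengths of simple closed geodesics by at most a bounded factor $c$. Since $\phi$ gives a bijection between oriented simple closed curves in $x_1$ and in $x_2$, both inequalities follow at once.

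Before anything else we must record a point about the genera. Such a $\phi$ exists only when $g_1=g_2$, because the surfaces have the same number $b+n$ of ends. Moreover, the displayed inequality cannot hold when $g_1\neq g_2$. Indeed, Mirzakhani's bound gives $h\le CL^{6g-6+2(n+b)}$, and for $g\ge 1$, $g+n+b\ge 3$ the main lower bound of the paper gives $h\ge C_1L^{6(g-1)+2(n+b-1)}$. Both bounds are polynomial, and a linear rescaling $L\mapsto cL$ cannot compare polynomial growths of different degree. For example, take $g_1=1$ and $g_2=3$ with $n+b=2$: then $h$ on the first surface is $O(L^{4})$, while $h$ on the second is $\gg L^{12}$. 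So the proof I would write treats the comparison with $g_1=g_2=g$, which is the only reading under which the statement is true. The actual content is then independence from the metric, from the class, and from the puncture/boundary type. I propose to make this hypothesis explicit in the statement.

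\textbf{Step 1 (change of class).} Fix a surface $S=S_{g,n}^b$. For strictly primitive $x,x'$ we would pick simple closed nonseparating curves $\alpha\in x$ and $\alpha'\in x'$. Both complements are connected of the same type, with the same distribution of ends, so there is an orientation-preserving $f$ fixing each end with $f(\alpha)=\alpha'$. Composing $f$ with point-pushes and twists about curves separating off ends, we can adjust the coefficients of the boundary/puncture classes so that $f_*x=x'$. This works because these operations add any multiple of a boundary class to $[\alpha]$ while keeping the curve simple. Then $f$ is a bijection of the relevant sets of curves that preserves lengths after pulling back the metric. This reduces the comparison to a single class and two metrics.

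\textbf{Step 2 (change of metric, including cusp $\leftrightarrow$ boundary).} For two metrics $X,Y$ on the same surface with all ends of the same type, we would take a $K$-bilipschitz homeomorphism; one exists because the convex cores are compact, or have cusps that can be matched isometrically. This gives $\ell_\alpha(Y)\le K\ell_\alpha(X)$ for all closed curves. To replace a cusp by a geodesic boundary component, we would compare a cusped metric $X$ with a metric $X'$ having a boundary of length $\varepsilon$. We would use the standard fact that simple geodesics avoid a fixed-size horoball neighbourhood of the cusp and a fixed-width collar of the boundary. On the complement of these neighbourhoods, the surfaces are bilipschitz with a constant depending only on the two metrics. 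Hence the lengths of simple geodesics are comparable up to a multiplicative constant, plus an additive constant times the number of crossings of the thin region. The additive term is at most a constant times the length, since each crossing costs a definite length.

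The main obstacle is this last comparison in Step 2. A simple geodesic can pass arbitrarily close to the cusp only by spiralling in the collar, so we must bound the extra length there. I would try Basmajian/collar-lemma estimates to show that each excursion of a simple geodesic into the thin part has length comparable in $X$ and $X'$. Combining Steps 1 and 2 yields $\ell_{\phi(\alpha)}(X_2)\le c\,\ell_\alpha(X_1)$ and the symmetric inequality, which gives the two-sided bound.
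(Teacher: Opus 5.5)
Your proposal is correct in substance and follows the paper's route. The paper proves the theorem by chaining three results:
\begin{itemize}
\item Proposition~\ref{th_nezavisimost_ot_metriki}, the change of metric via Wolpert's lemma, which plays the role of your bilipschitz homeomorphism;
\item Proposition~\ref{th_ne_zavisimosti_assimptotik_ot_klassa_gomologiy}, which pulls the metric back by a homeomorphism carrying $x_1$ to $x_2$, i.e.\ your Step~1;
\item Proposition~\ref{th_nezavisimost_ot_b}, which uses a diffeomorphism $\Psi\colon\Omega(S)\to\Omega(S_\infty)$ between compact cores (Construction~\ref{con_Psi}).
\end{itemize}
Your remark on the genus is also right. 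All three propositions work in a fixed genus, so the theorem is proved, and true, only for $g_1=g_2$, and your counterexample shows this hypothesis cannot be dropped. In Step~1 the point-push adjustment is superfluous: if $f(\alpha)=\alpha'$ as oriented curves, then already $f_*x=x'$.

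What needs correcting is what you call the main obstacle. It does not exist, and the claim behind it is false. By the collar theorem (Corollary~\ref{cor_collar}), a simple closed geodesic never enters the spikes or the boundary collars at all. This holds for the cusped metric and for the metric with boundary alike. Concretely, normalize the cusp stabilizer to $z\mapsto z+1$. A geodesic crossing the horocycle of length $2$, i.e.\ $\operatorname{Im} z=1/2$, lifts to a semicircle of Euclidean diameter $>1$. Such a semicircle meets its translate by $1$ transversally, so the projected closed geodesic is not simple. Thus there is no spiralling toward the cusp, no excursions into the thin part, no additive term, and no need for Basmajian-type estimates.

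The comparison is purely multiplicative. $\Psi$ maps the $X$-geodesic to a curve in the right homotopy class of length at most $C\ell_\gamma(X)$, and the $X_\infty$-geodesic in that class is no longer. Conversely, the $X_\infty$-geodesic is itself simple, hence lies in $\Omega(S_\infty)$, so $\Psi^{-1}$ applies to it. This is exactly Lemma~\ref{le_ocenka_general_Mirz}. You had already stated the avoidance fact in Step~2; you only needed to apply it to the geodesics in both metrics instead of worrying about crossings.
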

    This theorem will be proved in Section \ref{RefProject_General_case}.
    \begin{remark}
        In view of this theorem, we reduce the general problem to the case $h_{g,n}(L, x)$, 
        where the metric symbol is omitted for simplicity of notation.
    \end{remark}

    Consider the function $s_{g, n}(L, \gamma_0)$, where $\gamma_0$ belongs to the topological
    class of non-separating curves.
    It is well known that the homology classes represented by simple closed non-separating
    geodesics coincide precisely with the nonzero strictly primitive classes.
    Thus, the function $s_{g, n}(L, \gamma_0)$ can be written as an infinite
    sum over strictly primitive classes $x$ of the functions $h_{g,n}(L, x)$:
    \[
        s_{g, n}(L, \gamma_0) = \sum_{x - \text{ strictly primitive}} h_{g, n}(L, x).
    \]       
    As we have already noted, we are interested in the following question:
    does the growth order of the functions $h_{g, n}(L, x)$ coincide with that of $s_{g, n}(L, \gamma_0)$?
    
    The case $g=0$ is uninteresting, since on surfaces of genus $0$ there are no strictly primitive 
    homology classes, and 
    the homology classes of simple closed geodesics 
    are in bijection with the topological types of 
    simple closed geodesics, up to orientation.
    Note that in the simplest case of the once-punctured torus, for each strictly primitive 
    homology class there is a unique simple closed geodesic.
    Therefore, for sufficiently large $L$, the function $h_{1,1}(L,x) = 1$. Thus, in this case
    the growth order is strictly smaller than the growth order $cL^2$ from Mirzakhani's estimate.

    We have obtained the following general estimate.  
    \begin{theorem} \label{th_general_description}
        Let $X$ be a fixed hyperbolic metric on the surface $S_{g,n}$, with $g \ge 1$ and $g+n \ge 3$. Then for
        any strictly primitive homology class $x \neq 0$ there exist constants $C_1, C_2, L_0 > 0$ such that
        for any $L \ge L_0$,
    \[ C_1 L^{6(g-1) + 2(n-1)} \le h_{g,n}(L,x) \le C_2 L^{6(g-1) + 2n}.\]
    \end{theorem}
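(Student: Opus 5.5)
The upper bound is immediate. Every oriented simple closed geodesic representing a nonzero strictly primitive class $x$ is non-separating, so it lies in the mapping class group orbit of a fixed non-separating curve $\gamma_0$. Hence
\[
h_{g,n}(L,x)\le 2\,s_{g,n}(L,\gamma_0),
\]
where the factor $2$ accounts for orientations. Mirzakhani's asymptotic \eqref{eq_Mirz} then gives the bound $C_2L^{6(g-1)+2n}$ for large $L$. All the work is in the lower bound.

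For the lower bound I would cut along a fixed curve to reduce to a surface with one more boundary component and a smaller mapping class group. Fix a simple closed non-separating geodesic $\alpha$ representing $x$, and a simple closed curve $\beta$ meeting $\alpha$ exactly once. Cut $S_{g,n}$ along $\beta$ to obtain $S' = S_{g-1,n}^{2}$, whose two boundary components are $\beta^{\pm}$. The key observation is that $x$ is preserved by the stabilizer of $\beta$ acting through mapping classes of $S'$ that are trivial on $H_1$ of the closed-up surface modulo $[\beta]$. More concretely, I would use the Torelli-type subgroup. Take a simple arc $a$ in $S'$ joining $\beta^+$ to $\beta^-$; its endpoints glue up to give the closed curve $\alpha$. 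Every other simple arc $a'$ from $\beta^+$ to $\beta^-$ with the same endpoints closes up to a simple closed curve meeting $\beta$ once. Its homology class equals $x + k[\beta] + (\text{combination of peripheral classes and classes of } S')$. Restricting to arcs $a' = \phi(a)$ with $\phi$ in the subgroup $\mathcal{K}\subset \Mod(S')$ acting trivially on $H_1(S',\partial S')$ forces the homology class to be exactly $x$, up to adding multiples of $[\beta]$ coming from twisting at the endpoints. I would fix that twisting by normalizing the endpoints, for instance by measuring the twist relative to a fixed transversal and correcting with a power of $T_\beta$.

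This gives an injection from $\mathcal{K}\cdot a$ into the set of curves in $x$, with $\ell(\text{closed curve}) \le \ell(a') + \text{const}\cdot \ell(\beta)$ after the twist normalization. It remains to count arcs in an orbit of a finite-index-type subgroup on $S'$. By Mirzakhani-type counting for arcs (orthogeodesics) in mapping class group orbits on a surface with boundary, the number of arcs in $\Mod(S')\cdot a$ of length $\le L$ grows like $L^{6(g-1)-6+2(n+2)} = L^{6(g-1)+2(n-1)}$. This is exactly the claimed exponent, matching the dimension of the Teichmüller space of $S'$ with boundary lengths fixed.

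The main obstacle is that $\mathcal{K}$ has infinite index in $\Mod(S')$, so the orbit count does not transfer directly. I would therefore not use homological triviality. Instead I would exploit the fact that, for arcs with endpoints on $\beta^\pm$, the homology class of the closed-up curve depends on $a'$ only through its class in $H_1(S',\partial S')$ restricted to the $\beta$-direction. Concretely, the class of $a'\cup(\text{segment of }\beta)$ equals $x$ exactly when the algebraic intersection of $a'$ with a basis of closed curves in $S'$ matches that of $a$. Rather than counting this directly, I would use the hypothesis $g+n\ge 3$. It guarantees that $S'$ contains a subsurface $\Sigma\cong S_{g-1,n}^{1}$ (or with extra boundary), disjoint from $a$ after a small modification of $a$ near $\beta^-$, whose boundary $\partial\Sigma$ is separating in $S$. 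Mapping classes supported on $\Sigma$ then act trivially on the intersection numbers with curves outside $\Sigma$. However, they change intersections with curves in $\Sigma$, which suggests the cleaner route: choose $a$ to pass through a region $P$ whose complement structure is such that all modifications happen in a subsurface whose homology is killed.

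My first attempt would be to take $\Sigma$ to be the complement in $S$ of a regular neighbourhood of $\alpha\cup\beta$; this is $S_{g-1,n}^{1}$. Then I would count curves of the form $\alpha$ banded along $\phi(\delta)$, where $\delta$ is an arc in $\Sigma$ with endpoints on $\partial\Sigma$, $\phi\in\Mod(\Sigma)$, and the band sum is performed with both orientations cancelling. Such a curve is homologous to $x$, since the arc contributes a null-homologous loop $\delta\cdot\bar\delta$. Counting such arcs in $\Sigma$ gives $L^{\dim\mathcal{T}(\Sigma)} = L^{6(g-1)-6+2(n+1)} = L^{6(g-1)+2(n-1)}$ by Mirzakhani's orbit count for arcs. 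The band sum has length at most $2\ell(\phi\delta)+O(1)$ and is injective up to bounded multiplicity. This arc-banding construction is the step I expect to require the most care: simplicity of the band sum, bounded-to-one injectivity, and the length comparison all need checking.
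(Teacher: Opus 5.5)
Your upper bound is correct and matches the paper's. The lower bound has a genuine gap: neither of your two constructions reaches the exponent $6(g-1)+2(n-1)$.

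Your first route (arcs in $S'=S_{g-1,n}^{2}$ from $\beta^{+}$ to $\beta^{-}$) has the right dimension count. But, as you note yourself, fixing the homology class restricts you to an orbit of an infinite-index Torelli-type subgroup, and you have no counting result for that.

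Your final route counts arcs in $\Sigma\cong S_{g-1,n}^{1}$, which has only \emph{one} boundary component. There the orbit count is $L^{6(g-1)-6+2(n+1)}=L^{6(g-1)+2n-4}$. Identifying this with $L^{6(g-1)+2(n-1)}=L^{6(g-1)+2n-2}$ is an arithmetic slip, so even if everything else worked you would lose a factor $L^{2}$. For $S_{1,2}$, for instance, your formula gives exponent $0$, while the theorem asserts exponent $2$.

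Independently, the band sum is never pinned down. A loop of the form ``$\alpha$, then a path, then the same path backwards'' is homotopic to $\alpha$ and produces nothing new. A genuine band sum of $\alpha$ with a closed curve built from $\phi(\delta)$ and an arc of $\partial\Sigma$ changes the homology class by the class of that curve. That class varies with $\phi$, and you do not control it.

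The missing idea is to cut along a curve \emph{in} the class $x$ rather than along a transverse curve, and let a separation property handle the homology for closed curves.
\begin{itemize}
\item The paper chooses disjoint $\alpha,\alpha'$ which, together with a puncture $c$, cobound a pair of pants $S_{0,1}^{2}$, oriented so that $[\alpha']=[\alpha]+[c]=x$. When $n=0$ (so $g\ge 3$), it instead uses a bounding pair cutting off a copy of $S_{1}^{2}$.
\item Cutting along $\alpha$ gives $S_{g-1,n}^{2}$, in which $\alpha'$ is separating.
\item Every curve in the orbit of $\alpha'$ under mapping classes of $S_{g-1,n}^{2}$ (fixing boundary and punctures) again cuts off the same boundary copy of $\alpha$ together with $c$ (resp.\ a handle). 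Hence it represents $x$ in $S_{g,n}$.
\item Geodesic lengths in the cut surface coincide with those in $S_{g,n}$, and distinct curves there remain distinct after regluing.
\end{itemize}
Theorem~\ref{th_general_Mirz}, applied to closed curves on $S_{g-1,n}^{2}$ (two boundary components), then gives exactly $L^{6(g-1)-6+2(n+2)}=L^{6(g-1)+2(n-1)}$. No arcs and no infinite-index subgroups are involved.
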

    In this theorem, the lower bound is the essential one, while the upper bound follows from Mirzakhani's theorem.
        
    Our main result is a lower bound for $h$ in the first nontrivial 
    case, namely a torus with two punctures. In this case, Mirzakhani's upper bound 
    is of the form $c L^4$, while our general lower bound from Theorem \ref{th_general_description}
    is of the form $c L^2$. We shall prove the following theorem.
    \begin{theorem} \label{th_S_1_2_description}
    Let $X$ be a fixed hyperbolic metric on the surface $S_{1,2}$.
    Then for any fixed 
    strictly primitive nonzero homology class $x \in H_1(S_{1,2}, \mathbb{Z})$,
    there exist constants $C > 0$ and $L_0 > 0$ such that
    for any $L \ge L_0$,
    \[
        h_{1,2}(L, x) \ge C L^{3 + \eta},
    \]
    where $\eta = 0.011057 \ldots$
   \end{theorem}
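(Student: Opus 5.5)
The plan is to build explicitly a large family of distinct simple closed curves in the class $x$, control their lengths by combinatorial data, and count that data. By Theorem~\ref{th_nezavisimost_description} we may fix a convenient metric and a convenient representative class. Choose a simple closed nonseparating curve $\alpha_0 \in x$ and complete it to a pants decomposition $\{\alpha_0,\beta\}$ of $S_{1,2}$, where $\beta$ separates off the two punctures. Use Dehn--Thurston coordinates $(m_{\alpha_0},t_{\alpha_0},m_\beta,t_\beta)\in\mathbb{Z}^4$ for simple closed curves. By the standard comparison (Mirzakhani), the hyperbolic length $\ell_\gamma(X)$ is bounded above and below by constant multiples of $\|(m,t)\|_1$. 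Counting curves of length $\le L$ in $x$ therefore reduces, up to constants, to counting lattice points of $\ell^1$-norm $\le L$ in the set $\Omega_x\subset\mathbb{Z}^4$ of coordinates of curves in the class $x$.

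The first step is to describe $\Omega_x$. The homology class of the curve with coordinates $(m,t)$ is a piecewise-linear function of these coordinates on each chamber. The naive expectation is that the condition ``class $=x$'' cuts out a codimension-one subset of the $4$-dimensional cone, which would give $\asymp L^3$. The catch is that this function is not linear: within the chamber where the arcs of $\gamma$ in the pair of pants containing the punctures wind around a puncture, the homology class only sees the parity and signs of some coordinates. I would therefore decompose $\Omega_x$ along the action of the subgroup $G_x\subset \Mod_{1,2}$ stabilizing $x$. This subgroup contains the bounding-pair maps $T_{\alpha}T_{\alpha'}^{-1}$ and the point-pushing maps of the punctures; these are the edges of the cycle complex of homologous disjoint curves.

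The second step produces the polynomial part $L^3$. I would exhibit a $3$-parameter family of curves obtained from $\alpha_0$ by words $T_\beta^{a}\,P^{b}\,(T_{\alpha_0}T_{\alpha_0'}^{-1})^{c}$, where $P$ is a point push. For these one checks directly that the Dehn--Thurston coordinates are injective, linear in $(a,b,c)$, and have norm $\asymp |a|+|b|+|c|$ times a fixed vector. This already gives $h_{1,2}(L,x)\ge CL^3$ and recovers the flavour of Theorem~\ref{th_general_description}.

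The third step, which produces the exponent $\eta$, iterates this construction. A curve $\gamma\in x$ disjoint from a homologous curve $\gamma'$ determines a new $3$-parameter family based at $\gamma$. Concatenating such moves gives a tree of curves indexed by words in a semigroup of piecewise-linear maps $A_i$ on the coordinate cone, with coordinate norms growing roughly multiplicatively along words. The count then behaves like
\[\sum_{\text{words } w,\ \|w\|\le L} \Big(\frac{L}{\|w\|}\Big)^3 ,\]
and this sum is $\asymp L^{3+\eta}$, where $\eta$ is the critical exponent $\inf\{s:\sum_w \|w\|^{-(3+s)}<\infty\}$ shifted by $3$. Equivalently, $3+\eta$ is the root of a pressure equation $\sum_i \lambda_i^{-s}=1$ for the relevant expansion factors $\lambda_i$, possibly a matrix version given by a joint-spectral-radius-type quantity. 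I expect solving this equation numerically to give $\eta=0.011057\ldots$, with rigorous lower estimates obtained by restricting to a finite subsystem of generators.

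The main obstacle is the third step. Two things must be established simultaneously. First, the curves produced by distinct words must be genuinely distinct, which requires a ping-pong or cone-invariance argument in Dehn--Thurston coordinates ensuring the maps $A_i$ send a fixed cone strictly inside disjoint subcones. Second, the lengths must be controlled multiplicatively from above uniformly along words, i.e. $\|A_w v\|\le C\prod \lambda_{i_k}\|v\|$, so that the lattice-point count is not lost to bounded distortion accumulating over long words. I would first try to choose the generators so that all maps are linear on a common invariant cone, which makes both the injectivity and the norm estimates follow from positivity of the matrices (Perron--Frobenius). Failing that, I would pass to a sub-semigroup where this linearity holds, at the cost of a smaller but still positive $\eta$.
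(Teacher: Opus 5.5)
\textbf{Verdict: the proposal has genuine gaps.} The homology bookkeeping is wrong, Step 2's family does not exist, and the length control, distinctness and exponent are left unproved.

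\textbf{Homology.} In $S_{1,2}$, two disjoint, non-isotopic, non-separating curves that agree modulo $[\delta]$ always cobound a once-punctured annulus. Indeed, cut along one of them: the complement is planar, so the other curve separates it, with exactly one puncture on each side. Hence their classes in $H_1(S_{1,2};\mathbb{Z})$ differ by $\pm[\delta]\neq 0$, and there is no ``curve $\gamma\in x$ disjoint from a homologous curve $\gamma'$'' to iterate from. Every edge of the cycle complex shifts the integral class by $\pm[\delta]$.

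The paper tracks this sign. By item~5 of Lemma~\ref{le_specialnye_coordinaty_D_T}, the shift at step $k$ is $(-1)^{m_k/2}\sigma\,[\delta]$. A path therefore returns to $x$ only if it has even length $2n$ and the partial sums $q_k$ are balanced. This is why the count in Theorem~\ref{th_reshenie_combinatornoy_zadachi} carries parity condition (5), and only $\binom{2n}{n}$ of the parity patterns survive. Your iteration counts curves in all the classes $x+k[\delta]$ at once and has no mechanism for landing back in $x$.

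\textbf{Step 2.} The factor $T_{\alpha_0}T_{\alpha_0'}^{-1}$ fixes $\alpha_0$, so the family has at most two parameters. Moreover, $T_\beta$ does not commute with a push $P$ along a non-separating loop: $\beta$ must cross the boundary of the push annulus, since that boundary separates the two punctures. So the length of $T_\beta^aP^b(\alpha_0)$ grows roughly like $|a|\,|b|$, not $|a|+|b|$, and the Dehn--Thurston coordinates of such words are not linear in the exponents. In the paper the $L^3$ comes instead from two tree steps $(m_1,t_1,m_2,t_2)$ with length $\lesssim m_2(m_1+|t_1|)+|t_2|$, which is a divisor-type count.

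\textbf{The ``main obstacle'' is the proof itself.} The two items you flag in Step 3 are the substance of the argument, and the proposal leaves both open.
\begin{itemize}
\item \emph{Distinctness} needs no ping-pong. $\mathcal{B}_{1,2}$ is one-dimensional and contractible, hence a tree, so indexing vertices by paths from a root is automatically injective.
\item \emph{Uniform multiplicative length control} comes from the convexity Lemma~\ref{le_vypuklost_B}, proved via $\mathrm{Drain}\circ\mathrm{Surger}$. It makes lengths non-decreasing along paths leaving a shortest vertex. That is exactly what allows the hexagon estimate $\ell_{\gamma_{(m,t)}}\le m\ell_{\gamma_1}+|t|\ell_{\gamma_0}+mC$ to be applied at every depth: it requires $\ell_{\gamma_0}\le\ell_{\gamma_1}$, with $C$ depending only on $X$. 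This yields $\ell\le \mathrm{const}\cdot\|A(m_2,t_2)\cdots A(m_n,t_n)\|$.
\item A single global Dehn--Thurston chart plus a Perron--Frobenius argument, as you suggest, is not shown to give either of these.
\end{itemize}

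\textbf{The exponent.} The value $0.011057\ldots$ is not an intrinsic critical exponent that a pressure equation would return. It is the optimum of an explicit, lossy lower bound, built from:
\begin{itemize}
\item submultiplicativity over pairs of matrices;
\item the truncation $k_i\ge s=M^3r(M)$;
\item Lemma~\ref{le_o_poiske_max_urovnya}, optimized at $M\approx e^{11.2}$ and $\alpha\approx 0.33$.
\end{itemize}
Saying you expect the numerics to give $\eta$ does not prove the stated inequality. You would need an actual lower bound of at least this size for a concretely specified counting problem, and your proposal never formulates one.
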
  

    \begin{remark}
        The weaker bound $h_{1,2} \ge C L^3$ is significantly easier to obtain. 
        At an early stage of this work, the estimate $C L^3$ appeared to be a plausible 
        candidate for the correct asymptotics. We therefore consider it especially 
        significant that we have been able to prove a bound strictly stronger than $3$.
    \end{remark}
    The constants in the estimates from Theorems \ref{th_general_description} and \ref{th_S_1_2_description}
    depend on the metric and on the class $x$. Despite their different appearances, 
    all these estimates are united by a common idea.
    We will give a sketch of the proof using Figure \ref{fig_00}.
    \begin{figure}[h]
        \centering
        \begin{tikzpicture} 
            \draw (7.7,-1.4) -- node[above] {$\alpha$} cycle ; 
            \draw (7.7,0.7) -- node[above] {$\alpha'$} cycle ; 
            \draw[ultra thick] (1,0) to [out=90,in=180] (4,2) to [out=0,in=0] (7,2) to
            [out=0,in=90] (13,0) to [out=-90,in=0] (7,-2) to [out=180,in=-90] (1,0); 
            \draw[very thick] (7,2) to [out=-45,in=45] (7,0.07);
            \draw[dashed] (7,2) to [out=-135,in=135] (7,0.07);
            \draw[very thick] (3.5,0) to [out=-45,in=-135] (4.5, 0); 
            \draw[very thick] (3.6,-0.1) to [out=45,in=135] (4.4, -0.1); 
            \draw[very thick] (6.5,0) to [out=-45,in=-135] (7.5, 0); 
            \draw[very thick] (6.6,-0.1) to [out=45,in=135] (7.4, -0.1); 
            \draw[very thick] (9.5,0) to [out=-45,in=-135] (10.5, 0); 
            \draw[very thick] (9.6,-0.1) to [out=45,in=135] (10.4, -0.1); 
            \draw[very thick] (7,-0.2) to [out=-45,in=45] (7,-2);
            \draw[dashed] (7,-0.2) to [out=-135,in=135] (7,-2);
        \end{tikzpicture}
        \caption{ Bounding pair.}
        \label{fig_00}

    \end{figure}

    Let $[\alpha] = x$. 
    Note that $[\alpha'] = [\alpha]$, since these geodesics together separate the surface. 
    Consider the auxiliary surface $S_{2}^{2}$, obtained from $S_{3,0}$ by cutting along $\alpha$. 
    Then, using the corollary from Mirzakhani's theorem (see Theorem \ref{th_general_Mirz}), 
    we obtain that for sufficiently large $L$ the following inequalities hold:
    \[\frac{1}{C} L^{10} \le s_{2}^{2}(L, \alpha') \le  C L^{10}.\]
    Moreover, for any geodesic $\gamma \in \mathrm{Mod}_{2}^{2} \cdot \alpha'$, we have $[\gamma] = [\alpha]$.
    Consequently, for sufficiently large $L$, we obtain the estimate
    \[
        \frac{1}{C} L^{10}  \le  h_{3,0}(L,x).
    \]
    The lower bounds in Theorem \ref{th_general_description} are obtained  similarly.
   
    The main effort in our work was directed towards obtaining the lower bound for 
    the function $h_{1,2}(L,x)$ from Theorem \ref{th_S_1_2_description}. To prove this, 
    we apply the above construction iteratively, at each step cutting along the curves 
    produced earlier, which yields an improved estimate. A key role in describing these 
    iterations is played by the cycle complex $\mathcal{B}_{g}$ and 
    its generalization $\mathcal{B}_{g, n}$.
    This complex was first considered in the paper \cite{Bestvina}, which is devoted to 
    the cohomological dimension of the Torelli group, and was constructed only for closed surfaces.
    In the present work, we consider its direct generalization to surfaces with boundary.
   
    We now outline the proof in this case. We shall identify geodesics representing strictly primitive 
    classes $x + my$, where $m \in \mathbb{Z}$, with vertices of the cycle complex $\mathcal{B}_{1,2}$, 
    where $y$ denotes the class of a curve homotopic to a loop around a puncture. Let $\gamma_0$ denote
    the vertex corresponding to the shortest such geodesic.
    The complex $\mathcal{B}_{1,2}$ is a tree with vertices of infinite degree. Two vertices are joined
     by an edge if the corresponding geodesics are disjoint. Thus, in this case $\mathcal{B}_{1,2}$ is 
    a subcomplex of the curve complex. We prove that the restriction of $\mathcal{B}_{1,2}$ to geodesics 
    of length $\le L$ is a contractible complex. This fact allows us to iteratively estimate the lengths 
    of geodesics corresponding to vertices at a fixed distance in the cycle complex from the distinguished 
    vertex $\gamma_0$, which represents a geodesic of shortest length in the class $x$. The above argument 
    is described in more detail in the proof of Theorem \ref{th_ocenka_S_1_2}.
    After this procedure, we reduce the problem to a noncommutative analogue of the generalized Dirichlet 
    divisor problem, which is a classical number-theoretic problem. The solution of this purely 
    combinatorial problem is the subject of Theorem \ref{th_reshenie_combinatornoy_zadachi}.

    It proved useful for our purposes to consider the surface $S_{0,4}$.
    This case is of interest because for it there is a known necessary and sufficient 
    condition for a closed geodesic to be simple in terms of Dehn--Thurston coordinates,
    which in general describe integral multicurves
    (collections of pairwise disjoint geodesics with integral weights).
    This fact was used in the work \cite{Rivin}.

    We obtain a lower bound for the number of curves representing the class $[\gamma_0]$ and
    corresponding to vertices
    at distance $2k$ in the cycle complex from the distinguished vertex $\gamma_0$. 
    This bound is of the form
    \[
        h_{1,2}(L,x) \ge C_k L^{3}(\log L)^{k-1},
    \]
    where the inequality holds for sufficiently large $L$. 
    We then sum these estimates over all $k$ and obtain
    the lower bound for $h_{1,2}(L,x)$ from Theorem \ref{th_S_1_2_description}. 
    \begin{figure}[H]
        \centering
        \begin{tikzpicture} [scale=0.9]
            \draw (0,0) -- node[below] {$\gamma_0$} cycle ;  
            \filldraw[black] (0,0) circle (1pt);
            \filldraw[black] (0,6.2) circle (0.5pt); 
            \filldraw[black] (0,6.7) circle (0.5pt);
            \filldraw[black] (0,7.2) circle (0.5pt);

            \filldraw[black] (6.2, 6.2) circle (0.5pt); 
            \filldraw[black] (6.5, 6.7) circle (0.5pt);
            \filldraw[black] (6.8, 7.2) circle (0.5pt);

            \draw[thick] (-3.5,5.25) to   (-3.75,5.625);
            \draw[thick] (-3.5,5.25) to   (-3.25,5.625);
            \filldraw[black] (-3.75,5.625) circle (1pt);
            \filldraw[black] (-3.25,5.625) circle (1pt);
            \draw[thick] (-2.5,5.25) to   (-2.75,5.625);
            \draw[thick] (-2.5,5.25) to   (-2.25,5.625);
            \filldraw[black] (-2.75,5.625) circle (1pt);
            \filldraw[black] (-2.25,5.625) circle (1pt);
            \draw[thick] (-3,4.5) to   (-3.5,5.25);
            \draw[thick] (-3,4.5) to   (-2.5,5.25);
            \filldraw[black] (-3.5,5.25) circle (1pt);
            \filldraw[black] (-2.5,5.25) circle (1pt);
            \draw[thick] (-1.5,5.25) to   (-1.75,5.625);
            \draw[thick] (-1.5,5.25) to   (-1.25,5.625);
            \filldraw[black] (-1.75,5.625) circle (1pt);
            \filldraw[black] (-1.25,5.625) circle (1pt);
            \draw[thick] (-0.5,5.25) to   (-0.75,5.625);
            \draw[thick] (-0.5,5.25) to   (-0.25,5.625);
            \filldraw[black] (-0.75,5.625) circle (1pt);
            \filldraw[black] (-0.25,5.625) circle (1pt);
            \draw[thick] (-1,4.5) to   (-1.5,5.25);
            \draw[thick] (-1,4.5) to   (-0.5,5.25);
            \filldraw[black] (-1.5,5.25) circle (1pt);
            \filldraw[black] (-0.5,5.25) circle (1pt);
            \draw[thick] (-2,3) to   (-3,4.5);
            \draw[thick] (-2,3) to   (-1,4.5);
            \filldraw[black] (-3,4.5) circle (1pt);
            \filldraw[black] (-1,4.5) circle (1pt);
            \draw[thick] (0.5,5.25) to   (0.25,5.625);
            \draw[thick] (0.5,5.25) to   (0.75,5.625);
            \filldraw[black] (0.25,5.625) circle (1pt);
            \filldraw[black] (0.75,5.625) circle (1pt);
            \draw[thick] (1.5,5.25) to   (1.25,5.625);
            \draw[thick] (1.5,5.25) to   (1.75,5.625);
            \filldraw[black] (1.25,5.625) circle (1pt);
            \filldraw[black] (1.75,5.625) circle (1pt);
            \draw[thick] (1,4.5) to   (0.5,5.25);
            \draw[thick] (1,4.5) to   (1.5,5.25);
            \filldraw[black] (0.5,5.25) circle (1pt);
            \filldraw[black] (1.5,5.25) circle (1pt);
            \draw[thick] (2.5,5.25) to   (2.25,5.625);
            \draw[thick] (2.5,5.25) to   (2.75,5.625);
            \filldraw[black] (2.25,5.625) circle (1pt);
            \filldraw[black] (2.75,5.625) circle (1pt);
            \draw[thick] (3.5,5.25) to   (3.25,5.625);
            \draw[thick] (3.5,5.25) to   (3.75,5.625);
            \filldraw[black] (3.25,5.625) circle (1pt);
            \filldraw[black] (3.75,5.625) circle (1pt);
            \draw[thick] (3,4.5) to   (2.5,5.25);
            \draw[thick] (3,4.5) to   (3.5,5.25);
            \filldraw[black] (2.5,5.25) circle (1pt);
            \filldraw[black] (3.5,5.25) circle (1pt);
            \draw[thick] (2,3) to   (1,4.5);
            \draw[thick] (2,3) to   (3,4.5);
            \filldraw[black] (1,4.5) circle (1pt);
            \filldraw[black] (3,4.5) circle (1pt);
            \draw[thick] (0,0) to   (-2,3);
            \draw[thick] (0,0) to   (2,3);
            \filldraw[black] (-2,3) circle (1pt);
            \filldraw[black] (2,3) circle (1pt);

            \draw[thick, dotted] (-3.5,4.5) to [out=90,in=180] (-3, 4.75) to [out=0,in=180] (3,4.75) to [out=0,in=90] (3.5,4.5);
            \draw[thick, dotted] (-3.5,4.5) to [out=-90,in=180] (-3, 4.25) to [out=0,in=180] (3,4.25) to [out=0,in=-90] (3.5,4.5);

            \draw[thick, dotted] (-4.25,5.625) to [out=90,in=180] (-4,5.8) to [out=0,in=180] (4,5.80) to [out=0,in=90] (4.25,5.625);
            \draw[thick, dotted] (-4.25,5.625) to [out=-90,in=180] (-4,5.45) to [out=0,in=180] (4,5.45) to [out=0,in=-90] (4.25,5.625);
                
            \draw (4,4.5) -- node[right] {$\ge C_1 L^3 $} cycle ;
            \draw (4.75,5.625) -- node[right] {$\ge C_2 L^3 \log L $} cycle ;
        \end{tikzpicture} 
        \caption{Iterative bound.}
    \end{figure}
    On the odd layers, the class $[\gamma_0]$ does not arise, since the vertices of the 
    $k$-th layer represent homology classes of the form $[\gamma_0] + qy$, where the parity of 
    $q$ coincides with the parity of $k$.

    In addition, during this work we wrote a program for the numerical estimation of the growth 
    order of the function $h_{1,2}(L, x)$.
    Assume that $h_{1,2}(L, x) = C_x L^{\alpha_x} (1 + o(1))$ as $L \to \infty$. 
    Note that the exponent $\alpha_x$ is approximated by the function 
    $\alpha(L, x) = \log_2 \left( \frac{h_{1,2}(2L, x)}{h_{1,2}(L, x)} \right)$
     as $L$ tends to infinity. Figure \ref{fig0} shows the graph of $\alpha(L, x)$ 
     for both the zero and a nonzero strictly primitive class.
    One clearly sees that $\alpha(L, 0)$ converges to $4$, which agrees with
     Mirzakhani's result for separating curves. This provides evidence that the program 
     correctly estimates the growth order of $h_{1,2}(L, x)$. 
     In the case of a nonzero class $x$, the function $\alpha(L, x)$ appears to 
     tend to a value close to $3.5$. This suggests that the growth order of 
     $h_{1,2}(L, x)$ is strictly smaller than that of $s_{1,2}(L)$, which is $4$.
    \begin{figure}[h]
        \centering
        \begin{tikzpicture}
    \begin{axis}[
            legend pos = north east, 
            height = 0.31\paperheight, 
            width = 0.82\paperwidth,
            xmin = 10,
            xmax = 200,
            line width=1pt
        ]
            
                        \legend{ 
                            $ x = 0 $, 
                            $ x \neq 0$, 
                            $4$, 
                            $3.5 $
                            };

                        \addplot[solid,  draw = violet] table[row sep=crcr]  {
                        17 4.42146 \\
                18 4.06465 \\
                19 4.36768 \\
                20 4.02393 \\
                21 4.2963 \\
                22 4.04782 \\
                23 4.29912 \\
                24 4.00729 \\
                25 4.24021 \\
                26 4.02259 \\
                27 4.23129 \\
                28 3.99053 \\
                29 4.1869 \\
                30 3.98456 \\
                31 4.17322 \\
                32 3.97766 \\
                33 4.14941 \\
                34 3.98959 \\
                35 4.15543 \\
                36 3.98701 \\
                37 4.1435 \\
                38 3.9964 \\
                39 4.13985 \\
                40 3.97536 \\
                41 4.1162 \\
                42 3.97917 \\
                43 4.11452 \\
                44 3.97661 \\
                45 4.10118 \\
                46 3.97824 \\
                47 4.10222 \\
                48 3.97616 \\
                49 4.09504 \\
                50 3.9801 \\
                51 4.08995 \\
                52 3.97417 \\
                53 4.08376 \\
                54 3.97896 \\
                55 4.08469 \\
                56 3.97646 \\
                57 4.07447 \\
                58 3.97955 \\
                59 4.0783 \\
                60 3.98088 \\
                61 4.07676 \\
                62 3.98351 \\
                63 4.0723 \\
                64 3.97349 \\
                65 4.06288 \\
                66 3.97651 \\
                67 4.06344 \\
                68 3.97668 \\
                69 4.05739 \\
                70 3.97545 \\
                71 4.05748 \\
                72 3.97683 \\
                73 4.05704 \\
                74 3.97993 \\
                75 4.05424 \\
                76 3.97619 \\
                77 4.05204 \\
                78 3.98059 \\
                79 4.05479 \\
                80 3.97952 \\
                81 4.04844 \\
                82 3.9793 \\
                83 4.04961 \\
                84 3.98132 \\
                85 4.04975 \\
                86 3.98289 \\
                87 4.04714 \\
                88 3.97834 \\
                89 4.04384 \\
                90 3.98118 \\
                91 4.04552 \\
                92 3.9815 \\
                93 4.04152 \\
                94 3.98005 \\
                95 4.04139 \\
                96 3.98109 \\
                97 4.04136 \\
                98 3.98245 \\
                99 4.03873 \\
                100 3.9796 \\
                101 4.03742 \\
                102 3.98254 \\
                103 4.03942 \\
                104 3.981 \\
                105 4.03382 \\
                106 3.97963 \\
                107 4.03431 \\
                108 3.9825 \\
                109 4.0362 \\
                110 3.98368 \\
                111 4.03395 \\
                112 3.9803 \\
                113 4.0322 \\
                114 3.98336 \\
                115 4.0341 \\
                116 3.98375 \\
                117 4.03153 \\
                118 3.98256 \\
                119 4.03166 \\
                120 3.9839 \\
                121 4.03232 \\
                122 3.98465 \\
                123 4.03003 \\
                124 3.98228 \\
                125 4.02883 \\
                126 3.98428 \\
                127 4.03042 \\
                128 3.98394 \\
                129 4.02718 \\
                130 3.98278 \\
                131 4.0275 \\
                132 3.9847 \\
                133 4.02874 \\
                134 3.98565 \\
                135 4.02676 \\
                136 3.98281 \\
                137 4.02552 \\
                138 3.98544 \\
                139 4.02757 \\
                140 3.98558 \\
                141 4.02516 \\
                142 3.98456 \\
                143 4.02557 \\
                144 3.98596 \\
                145 4.02622 \\
                146 3.98629 \\
                147 4.02419 \\
                148 3.98466 \\
                149 4.02398 \\
                150 3.98675 \\
                151 4.02557 \\
                152 3.98665 \\
                153 4.02311 \\
                154 3.98529 \\
                155 4.02286 \\
                156 3.98683 \\
                157 4.02416 \\
                158 3.98711 \\
                159 4.02221 \\
                160 3.98457 \\
                161 4.02091 \\
                162 3.98647 \\
                163 4.02248 \\
                164 3.98669 \\
                165 4.02031 \\
                166 3.9854 \\
                167 4.02055 \\
                168 3.98689 \\
                169 4.02162 \\
                170 3.98759 \\
                171 4.02023 \\
                172 3.98603 \\
                173 4.01991 \\
                174 3.98781 \\
                175 4.02107 \\
                176 3.98715 \\
                177 4.01868 \\
                178 3.9862 \\
                179 4.01894 \\
                180 3.98797 \\
                181 4.02036 \\
                182 3.9883 \\
                183 4.01879 \\
                184 3.98654 \\
                185 4.01807 \\
                186 3.98811 \\
                187 4.0195 \\
                188 3.98838 \\
                189 4.01785 \\
                190 3.98721 \\
                        };
                
                        \addplot[solid,  draw = teal] table[row sep=crcr]  {
                        17 4.64386 \\
                18 5.08746 \\
                19 4.44294 \\
                20 4.72792 \\
                21 4.44846 \\
                22 4.72792 \\
                23 4.56986 \\
                24 4.79442 \\
                25 4.29068 \\
                26 4.51907 \\
                27 4.22497 \\
                28 4.41954 \\
                29 4.25956 \\
                30 4.47199 \\
                31 4.22386 \\
                32 4.39819 \\
                33 4.06695 \\
                34 4.24336 \\
                35 3.95146 \\
                36 4.10236 \\
                37 3.91461 \\
                38 4.05889 \\
                39 3.91684 \\
                40 4.0518 \\
                41 3.89139 \\
                42 4.03132 \\
                43 3.88802 \\
                44 4.00961 \\
                45 3.87568 \\
                46 3.99382 \\
                47 3.87933 \\
                48 3.99145 \\
                49 3.81074 \\
                50 3.92052 \\
                51 3.8035 \\
                52 3.90779 \\
                53 3.79128 \\
                54 3.89449 \\
                55 3.80253 \\
                56 3.89334 \\
                57 3.77386 \\
                58 3.86913 \\
                59 3.74327 \\
                60 3.83372 \\
                61 3.74637 \\
                62 3.83111 \\
                63 3.74238 \\
                64 3.82636 \\
                65 3.72708 \\
                66 3.81239 \\
                67 3.71791 \\
                68 3.79226 \\
                69 3.72128 \\
                70 3.79723 \\
                71 3.71894 \\
                72 3.79335 \\
                73 3.70298 \\
                74 3.77141 \\
                75 3.6983 \\
                76 3.76802 \\
                77 3.69454 \\
                78 3.76424 \\
                79 3.69785 \\
                80 3.7612 \\
                81 3.68494 \\
                82 3.7512 \\
                83 3.67208 \\
                84 3.73499 \\
                85 3.66401 \\
                86 3.72415 \\
                87 3.66348 \\
                88 3.72323 \\
                89 3.6587 \\
                90 3.71961 \\
                91 3.66044 \\
                92 3.7151 \\
                93 3.66227 \\
                94 3.7193 \\
                95 3.65961 \\
                96 3.71507 \\
                97 3.6495 \\
                98 3.70159 \\
                99 3.64595 \\
                100 3.69878 \\
                101 3.6387 \\
                102 3.69122 \\
                103 3.63894 \\
                104 3.68734 \\
                105 3.63494 \\
                106 3.68486 \\
                107 3.62901 \\
                108 3.67759 \\
                109 3.6253 \\
                110 3.67074 \\
                111 3.62804 \\
                112 3.67454 \\
                113 3.62116 \\
                114 3.66855 \\
                115 3.62038 \\
                116 3.66293 \\
                117 3.62125 \\
                118 3.66574 \\
                119 3.61673 \\
                120 3.66091 \\
                121 3.61323 \\
                122 3.65453 \\
                123 3.61252 \\
                124 3.65486 \\
                125 3.61026 \\
                126 3.65261 \\
                127 3.6104 \\
                128 3.64911 \\
                129 3.6066 \\
                130 3.64758 \\
                131 3.60043 \\
                132 3.64034 \\
                133 3.59869 \\
                134 3.63606 \\
                135 3.60034 \\
                136 3.63897 \\
                137 3.59519 \\
                138 3.63368 \\
                139 3.59575 \\
                140 3.63143 \\
                141 3.59655 \\
                142 3.6332 \\
                143 3.59334 \\
                144 3.6298 \\
                145 3.59006 \\
                146 3.62419 \\
                147 3.59182 \\
                148 3.62684 \\
                149 3.58953 \\
                150 3.6248 \\
                151 3.59004 \\
                152 3.62253 \\
                153 3.58791 \\
                154 3.62187 \\
                155 3.5843 \\
                156 3.61824 \\
                157 3.58361 \\
                158 3.61451 \\
                159 3.58393 \\
                160 3.6168 \\
                161 3.57977 \\
                162 3.61227 \\
                163 3.57838 \\
                164 3.60832 \\
                165 3.57951 \\
                166 3.61103 \\
                167 3.57785 \\
                168 3.60877 \\
                169 3.57535 \\
                170 3.60471 \\
                171 3.57535 \\
                172 3.6054 \\
                173 3.57336 \\
                174 3.60357 \\
                175 3.57389 \\
                176 3.60192 \\
                177 3.5733 \\
                178 3.6028 \\
                179 3.5693 \\
                180 3.59852 \\
                181 3.56929 \\
                182 3.5964 \\
                183 3.57011 \\
                184 3.59839 \\
                185 3.56815 \\
                186 3.59636 \\
                187 3.56741 \\
                188 3.59357 \\
                189 3.56872 \\
                190 3.59628 \\
                        };
                
                        \addplot[solid,  draw = black]  table[row sep=crcr] {
                        a  b \\
                        17 4 \\
                        190 4 \\
                        };
                    
                        \addplot[solid,  draw = orange]  table[row sep=crcr]  {
                        a   c \\
                        17  3.5 \\
                        190  3.5 \\
                        };
                    
                    \end{axis}
        \end{tikzpicture}
            
                \caption{Graphs of the function $\alpha(L, x) = \log_2 \left( \frac{h_{1,2}(2L, x)}{h_{1,2}(L,x)}\right)$ for various $x$.}
                \label{fig0}
    \end{figure}

    We shall use the following notation. Let $g, h: \mathbb{R} \rightarrow \mathbb{R}$ be functions.
    Then $g \ll h$ if and only if there exist constants $M, L_0 > 0$ such that for any $L \ge L_0$, we have $|g(L)| \le M h(L)$.
    If both $g \ll h$ and $h \ll g$ hold, we write $g \asymp h$.

    We now briefly outline the structure of the paper.
    In Section \ref{Preliminaries} we collect the necessary background material
    on curves on hyperbolic surfaces \ref{RefProject-Background_material},
    Dehn--Thurston coordinates \ref{RefProject_D-T_coord},
    the Bestvina--Bux--Margalit cycle complex \ref{Complex_B_B_M}, and the collar theorem \ref{subsec_collar}.
    The only new result in this section is Lemma \ref{le_vypuklost_B}, which establishes the convexity 
    of the length function on the cycle complex $\mathcal{B}_{1,2}$.
    In Section \ref{RefProject_General_case} we prove Theorems \ref{th_nezavisimost_description} and 
    \ref{th_general_description}.
    In Section \ref{Case S_1_2} we prove the main result of this work, namely Theorem 
    \ref{th_S_1_2_description}.
    
    \medskip
    \noindent\textbf{Acknowledgments.}
    I am deeply grateful to my advisor, Alexander Alexandrovich Gaifullin, for his invaluable guidance and
    support throughout this research. I am particularly thankful for his patience, insightful advice, and 
    constructive comments, which helped to structure the work and significantly improve its quality.
    I am also grateful to Maxim Alexandrovich Korolev for useful discussions.
    The author is a winner of the all-Russia mathematical August Moebius contest of graduate and 
    undergraduate student papers and thanks the jury and the board for the high praise of his work.

    \section{Preliminaries} \label{Preliminaries}
    We shall use the following standard notation. Let $S_{g,n}^{b}$ denote a two-dimensional 
    surface of genus $g$ with $b$ boundary components and $n$ punctures. For convenience,
    we sometimes use the abbreviated notation $S_g = S_{g,0}^{0}$, $S_{g, n} = S_{g,n}^{0}$, 
    and $S_{g}^{b} = S_{g,0}^{b}$.

   \subsection{Curves on surfaces}\label{RefProject-Background_material}
   Let $\gamma$ be an oriented simple closed curve on the surface $S_{g,n}^{b}$. We denote by
    $[\gamma]$ its homology class in $H_1(S_{g,n}^{b}, \mathbb{Z})$.

   For isotopy classes of simple closed curves $\alpha$ and $\beta$, we denote by 
   $\hat{i}(\alpha,\beta)$ the algebraic intersection number and by $i(\alpha,\beta)$ 
   the geometric intersection number.

    \begin{definition} \label{def_multicurve}
    A formal sum $\gamma = \sum^{k}_{i=1} a_i \gamma_i$ is called an \textit{integral multicurve} on $S_{g,n}^{b}$ 
    if the $\gamma_i$ are pairwise non-homotopic, simple, closed, essential, pairwise disjoint curves, 
    and $a_i \in \mathbb{Z}_{>0}$.
    A curve is called essential if it is not homotopic to a point, to a puncture, or to a boundary component 
    of $\partial S_{g,n}^{b}$.
    An integral multicurve is said to be \textit{oriented} if each of its components is equipped with an orientation.
    \end{definition}
    Let $\mathcal{MC}^{b}_{n, g}$ denote the set of isotopy classes of unoriented integral multicurves
    on the surface $S_{g,n}^{b}$. In what follows, we shall not distinguish between an integral multicurve
    and its isotopy class. We shall identify an integral multicurve $\gamma = \gamma_1$ with 
    the simple closed curve $\gamma_1$.
    
    \begin{definition}
    The length $\ell_{\gamma}$ of a homotopy class of a simple closed curve $\gamma$ on a hyperbolic surface
    $S_{g,n}$ is defined to be the length of its geodesic representative. This definition is well-posed, 
    since any homotopy class of a simple closed curve $\gamma$ contains a unique geodesic representative.
    For an integral multicurve $\gamma = \sum^{k}_{i=1} a_i \gamma_i$, we define its length by
    \[\ell_{\gamma} = \sum^{k}_{i=1} a_i\ell_{\gamma_i}. \]
    \end{definition}

    Let $\Homeo^{+}(S,\partial S)$ be the group of orientation-preserving homeomorphisms of the surface 
    that are the identity on the boundary.
    Let $\Homeo_0(S,\partial S)$ denote the identity component of this group. The mapping class group 
    $\Mod_{g,n} = \Mod(S_{g,n})$ is defined as
    \[
    \Mod_{g,n} = \Homeo^{+}(S,\partial S) / \Homeo_0(S,\partial S).
    \]

    \begin{theorem}[M. Mirzakhani \cite{Mirz}] \label{th_Mirz}
        Let $\gamma$ be an integral multicurve on the surface $S_{g,n}$ with a fixed hyperbolic metric $X$. 
        Let $\Mod_{g,n}(\gamma)$ denote the orbit of $\gamma$ under the action of the mapping class group. 
        Denote by $s_X(L, \gamma)$ the number of integral multicurves of length $\le L$
        belonging to $\Mod_{g,n}(\gamma)$. Then the following asymptotic holds:
        \[s_X(L, \gamma) \sim n_{\gamma}(X) L^{6(g-1)+2n},\]
        where $n_{\gamma}(X)$ is a positive function depending only on $X$ and $\gamma$.
    \end{theorem}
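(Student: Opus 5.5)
Since Theorem~\ref{th_Mirz} is quoted from \cite{Mirz}, I would not reprove it inside this paper. If I were to reconstruct it, I would follow the now-standard measure-theoretic route on the space $\mathcal{ML}_{g,n}$ of measured laminations. Let $d = 6(g-1)+2n = \dim \mathcal{ML}_{g,n}$, and for $L>0$ consider the counting measures
\[
\mu_L = \frac{1}{L^{d}} \sum_{\alpha \in \Mod_{g,n}(\gamma)} \delta_{\alpha/L}
\]
on $\mathcal{ML}_{g,n}$, where integral multicurves are viewed as integral points of $\mathcal{ML}_{g,n}$. The length function $\lambda \mapsto \ell_\lambda(X)$ extends continuously and homogeneously to $\mathcal{ML}_{g,n}$. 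Set $B_X = \{\lambda : \ell_\lambda(X) \le 1\}$. Then $s_X(L,\gamma) = L^{d}\,\mu_L(B_X)$, so the theorem reduces to showing $\mu_L \to c_\gamma\, \mu_{\mathrm{Th}}$ weakly, where $\mu_{\mathrm{Th}}$ is the Thurston measure.

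The key steps, in order, are as follows.

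\textbf{Step 1: precompactness and absolute continuity.} Using train-track (or Dehn--Thurston) coordinates, in which $\ell_\lambda(X)$ is comparable to a norm, one checks that the integral points in any bounded region $L\cdot K$ number $O(L^{d})$. Hence the family $\{\mu_L\}$ is precompact, and every subsequential limit $\mu_\infty$ satisfies $\mu_\infty \le C\mu_{\mathrm{Th}}$.

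\textbf{Step 2: invariance and ergodicity.} Each $\mu_L$ is $\Mod_{g,n}$-invariant, so every limit $\mu_\infty$ is too. By Masur's ergodicity of the $\Mod_{g,n}$-action on $\mathcal{ML}_{g,n}$ with respect to $\mu_{\mathrm{Th}}$, an invariant measure absolutely continuous with respect to $\mu_{\mathrm{Th}}$ must equal $c\,\mu_{\mathrm{Th}}$ for some constant $c \ge 0$.

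\textbf{Step 3: passing to $B_X$.} The set $\partial B_X$ is a level set of a homogeneous continuous function, so $\mu_{\mathrm{Th}}(\partial B_X) = 0$. Therefore $\mu_L(B_X) \to c\,\mu_{\mathrm{Th}}(B_X)$ along the subsequence.

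The main obstacle is showing that the constant $c$ is independent of the subsequence and is strictly positive. The ergodicity argument alone only yields $\limsup$/$\liminf$ control. To pin $c$ down, I would integrate $s_X(L,\gamma)$ over moduli space $\mathcal{M}_{g,n}$ with respect to the Weil--Petersson volume. Mirzakhani's integration formula, obtained by unfolding over the stabilizer of $\gamma$ and using Fenchel--Nielsen coordinates, expresses
\[
\int_{\mathcal{M}_{g,n}} s_X(L,\gamma)\, dX
\]
as a polynomial in $L$ of degree exactly $d$, with positive leading coefficient coming from Weil--Petersson volumes of the moduli spaces of the complementary pieces of $\gamma$. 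Write $c(X) = \lim_L \mu_L(B_X)$ along a subsequence, with a common subsequence for all $X$ obtained by dominated convergence. Comparing leading terms gives
\[
\int_{\mathcal{M}_{g,n}} c\,\mu_{\mathrm{Th}}(B_X)\, dX \,=\, \text{(leading coefficient)} \,>\, 0 .
\]
This fixes $c = c_\gamma$ independently of the subsequence, and gives positivity of $n_\gamma(X) = c_\gamma\, \mu_{\mathrm{Th}}(B_X)$.

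A secondary technical point, which I expect to be routine but tedious, is the uniform domination needed to apply dominated convergence over the noncompact moduli space. For this I would bound $\mu_L(B_X)$ by a constant times $\prod_{\ell_{\beta}(X) < \epsilon} \ell_{\beta}(X)^{-1}$, the product taken over short curves $\beta$ on $X$; this bound is integrable with respect to the Weil--Petersson measure.
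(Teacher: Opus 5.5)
The paper gives no proof of this statement and simply cites \cite{Mirz}; your outline is exactly Mirzakhani's original argument, so it is correct and consistent with how the paper uses the result. That argument takes subsequential limits of the counting measures on $\mathcal{ML}_{g,n}$, uses Masur's ergodicity to identify every limit as $c\,\mu_{\mathrm{Th}}$, and then applies the integration formula over $\mathcal{M}_{g,n}$ to show that $c>0$ does not depend on the subsequence. One small correction: the subsequence is common to all $X$ automatically, because $\mu_L$ does not depend on $X$ at all; dominated convergence is used only to exchange $\lim_{L_i}$ with $\int_{\mathcal{M}_{g,n}}\,dX$, not to produce the subsequence.
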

    Let $T_{\beta} \in \Mod_{g,n}$ denote the left Dehn twist along the curve $\beta$.

    \subsection{Dehn--Thurston coordinates} \label{RefProject_D-T_coord}

    We now recall the construction of Dehn--Thurston coordinates on the set $\mathcal{MC}_{g, n}^{b}$, following \cite{HP}.

\begin{construction}
Fix an orientation on $S_{g, n}^{b}$ and a pants decomposition $\mathcal{P} = \{\alpha_i\}_{i=1}^{3(g-1)+n+b}$,
where each $\alpha_i$ is a simple closed geodesic. On each $\alpha_i$, fix a closed arc $b_i$ and a point $v_i 
\in \alpha_i \setminus b_i$. Choose a small closed neighbourhood $A_i = U(\alpha_i)$ and identify it with 
the standard annulus $A = S^1 \times [-1,1]$ so that $\alpha_i = S^1 \times \{0\}$. 
Let $P_j$ denote the connected components of $S_{g, n}^{b} \setminus \bigcup_{i} \mathring{U}(\alpha_i)$. 
On each $P_j$, fix a family of arcs representing all possible topological types of intersection of
an integral multicurve with $P_j$. Denote this set of arcs by $h_j$.
We assume that the arcs in $h_j$ are pairwise non-homotopic, nontrivial, and that their endpoints lie in 
$\bigcup_i (b_i \times \{-1, 1\})$.

\begin{figure}[h]
    \centering
    \begin{subfigure}[b]{0.44\textwidth}
           \centering
           \begin{tikzpicture}=

              \begin{scope}[xscale=1.3, yscale=1.3]
                \begin{scope}[rotate=0]
                \draw [very thick] (0.866, 0.5)  to [ out = 120,  in = -80]  (0.7, 1.5)  to [ out = -160,  in = -20]  (-0.7, 1.5)  to [ out = -100,  in = 60]  (-0.866, 0.5) ;
                \draw [very thick]  (0.7, 1.5)  to [ out = 160,  in = 20]  (-0.7, 1.5);
                
                \draw [ultra thick , color = red]  (0.3, 1.39)  to [ out = -170,  in = -10]  (-0.3, 1.39);
                \draw [ thick , color = red] (0.3, 1.45)  to [out = -90,  in = 90] (0.3, 1.33);
                \draw [ thick , color = red] (-0.3, 1.45)  to [out = -90,  in = 90] (-0.3, 1.33);
                \draw [ thick]  (0.18, 1.35)  to [out = -90,  in = 150] (1.259, -0.519);

                \draw [ thick]  (0, 1.35)  to [out = -90,  in = 60] (-0.3, -1.05);
                \draw [ thick, dashed]  (-0.3, -1.05) to [out = 145,  in =-85] (-1.08, 0.28);
                \draw [ thick]  (-1.08, 0.255)  to [out = 20,  in = -95] (-0.25, 1.36);

                \end{scope}

                \begin{scope}[rotate=120]
                \draw [very thick] (0.866, 0.5)  to [ out = 120,  in = -80]  (0.7, 1.5)  to [ out = -160,  in = -20]  (-0.7, 1.5)  to [ out = -100,  in = 60]  (-0.866, 0.5) ;
                \draw [very thick]  (0.7, 1.5)  to [ out = 160,  in = 20]  (-0.7, 1.5);

                \draw [ultra thick , color = red]  (0.3, 1.39)  to [ out = -170,  in = -10]  (-0.3, 1.39);
                \draw [ thick , color = red] (0.3, 1.45)  to [out = -90,  in = 90] (0.3, 1.33);
                \draw [ thick , color = red] (-0.3, 1.45)  to [out = -90,  in = 90] (-0.3, 1.33);
                \draw [ thick]  (0.18, 1.35)  to [out = -90,  in = 150] (1.259, -0.519);

                \draw [ thick]  (0, 1.35)  to [out = -90,  in = 60] (-0.3, -1.05);
                \draw [ thick, dashed]  (-0.3, -1.05) to [out = 145,  in =-85] (-1.08, 0.28);
                \draw [ thick]  (-1.08, 0.255)  to [out = 20,  in = -95] (-0.25, 1.36);
                \end{scope}

                \begin{scope}[rotate=-120]
                \draw [very thick] (0.866, 0.5)  to [ out = 120,  in = -80]  (0.7, 1.5)  to [ out = -160,  in = -20]  (-0.7, 1.5)  to [ out = -100,  in = 60]  (-0.866, 0.5) ;
                \draw [very thick]  (0.7, 1.5)  to [ out = 160,  in = 20]  (-0.7, 1.5);

                \draw [ultra thick , color = red]  (0.3, 1.39)  to [ out = -170,  in = -10]  (-0.3, 1.39);
                \draw [ thick , color = red] (0.3, 1.45)  to [out = -90,  in = 90] (0.3, 1.33);
                \draw [ thick , color = red] (-0.3, 1.45)  to [out = -90,  in = 90] (-0.3, 1.33);
                \draw [ thick]  (0.18, 1.35)  to [out = -90,  in = 150] (1.259, -0.519);

                \draw [ thick]  (0, 1.35)  to [out = -90,  in = 60] (-0.3, -1.05);
                \draw [ thick, dashed]  (-0.3, -1.05) to [out = 145,  in =-85] (-1.08, 0.28);
                \draw [ thick]  (-1.08, 0.255)  to [out = 20,  in = -95] (-0.25, 1.36);
                \end{scope}
            \end{scope}
            \end{tikzpicture}
           \caption{The general case of the set $h_j$.}

   \label{fig_Coord_D_T_1}
    \end{subfigure}
    \begin{subfigure}[b]{0.44\textwidth}
           \centering
           \begin{tikzpicture}=
              \draw (1.1, -0.25) -- node[above] {$\alpha$} cycle ;
              \draw (-0.5, 1.4) -- node[left] {$P_{1}$} cycle ;
              \draw (-0.5, -1.4) -- node[left] {$P_{-1}$} cycle ;
              \draw (-2, -0) -- node[left] {$v$} cycle ;
              \draw (-0.3, -0.4) -- node[below] {$b$} cycle ;
              \draw (1, 1.15) -- node[above] {$h_{1}$} cycle ;
              \draw (1.15, -1.55) -- node[above] {$h_{-1}$} cycle ;

              \draw [very thick] (-2, 2)  to [ out = -90,  in = 90]  (-2, -2)
                                          to [out = -20,  in = 200]  (-1, -2)
                                          to [out = 15,  in = 165]  (1, -2)
                                          to [out = -20,  in = 200]  (2, -2)
                                          to [out = 90,  in = -90]  (2, 2);

              \draw [very thick] (-2, 2)  to [out = -20,   in = 200]  (-1, 2)
                                          to [out = -15,  in = 195]  (1,  2)
                                          to [out = -20,   in = 200]  (2,  2);
            \draw [very thick] (-2, 2)   to [out = 20,   in = 160]  (-1, 2);
            \draw [very thick] (1,  2)   to [out = 20,   in = 160]  (2,  2);

            \draw [very thick, dashed] (-2, -2)   to [out = 20,   in = 160]  (-1, -2);
            \draw [very thick, dashed] (1,  -2)   to [out = 20,   in = 160]  (2,  -2);

            \draw [very thick] (-2, 0)   to [out = -20,  in = 200] (2, 0);
            \draw [very thick, dashed] (-2, 0)   to [out = 20,   in = 160]  (2, 0);

            \draw [thick] (-2, 0.6)   to [out = -20,  in = 200] (2, 0.6);
            \draw [ thick, dashed] (-2, 0.6)   to [out = 20,   in = 160]  (2, 0.6);

            \draw [ thick] (-2, -0.6)   to [out = -20,  in = 200] (2, -0.6);
            \draw [ thick, dashed] (-2, -0.6)   to [out = 20,   in = 160]  (2, -0.6);

            \draw [very thick, dashed] (0.7, 1.92)   to [out = -15,  in = -165] (2, 1.8);
            \draw [very thick, dashed] (0.7, -1.92)   to [out = 15,  in = 165] (2, -1.8);

            \draw [very thick] (-0.5,  0.24)   to [out = 90,  in = -120] (0.7, 1.92);
            \draw [very thick] (-0.5, -0.97)   to [out = -90,  in = 120] (0.7, -1.92);

            \draw [very thick] (-0.25,  0.2)   to [out = 90,  in = -120] (2, 1.8);
            \draw [very thick] (-0.25, -1)   to [out = -90,  in = 120] (2, -1.8);
              
            \draw [ultra thick , color = red] (-0.7, -0.35)   to [out = -9,  in = 184] (0.3, -0.39);
            \draw [ thick , dashed,   color = red] (-0.7, -0.95)   to [out = 90,  in = -90] (-0.7, 0.3);
            \draw [ thick , dashed,   color = red] (0.3, -0.95)   to [out = 90,  in = -90] (0.3, 0.3);
            \draw[decorate, decoration={brace, amplitude=6pt}] 
                  (2.15,0.6) -- (2.15,-0.6) node[midway, right=8pt] {$A$};
            \filldraw[red] (-2, 0) circle (2 pt);  
           \end{tikzpicture}
           \caption{Construction of Dehn--Thurston coordinates in the case of $S_{0}^{4}$.}
   \label{fig_Coord_D_T}
    \end{subfigure}

\end{figure}

Let $\gamma$ be an integral multicurve on $S_{g, n}^{b}$.
If the parameter $m_i = 0$, then $\gamma$ contains $k \in \mathbb{N} \cup \{0\}$
parallel copies of the curve $\alpha_i$. In this case, the parameter $t_i$ is set equal to $k$.

We now consider the case where $m_i > 0$.
Assume that the integral multicurve $\gamma$ and the integral multicurve $\sum_i \alpha_i$ are in minimal position.
We isotop the integral multicurve $\gamma$ so that the arcs of $\gamma \cap P_j$ lie in small metric neighbourhoods of arcs from the set $h_j$ of the corresponding topological type, and such that $\gamma \cap \partial P_j \subset \bigcup_i (b_i \times \{-1, 1\})$.
Finally, we isotop $\gamma$ so that for any $\tau \in [-1, 1]$ and any curve $\alpha_i$, we have
$i(\gamma, \alpha_i \times \{\tau\}) = m_i$.

We now define the twist parameter $t_i$. The absolute value $|t_i|$ is equal to the minimal number of intersections of the arcs
of $\gamma \cap A_i$ with the segment $v_i \times [-1,1]$, where the minimum is taken over all isotopies of the integral multicurve $\gamma$ that are the identity outside $\alpha_i \times (-1,1)$. The sign of $t_i$ is positive if some arc of $\gamma \cap A_i$ twists to the right, and negative if some arc twists to the left. Note that arcs twisting in both directions cannot occur.

If the set $\{\alpha_{i_{1}}, \ldots, \alpha_{i_{q}} \}$ bounds a pair of pants in $S_{g, n}^{b}$, then the following equality holds:
\[
i(\alpha_{i_{1}}, \gamma) + \ldots + i(\alpha_{i_{q}}, \gamma) = m_{i_{1}} + \ldots + m_{i_{q}} \equiv \hat{i}(\alpha_{i_{1}}, \gamma) + \ldots + \hat{i}(\alpha_{i_{q}}, \gamma) \equiv 0 \pmod{2}.
\]
Note that, in general, $q$ can take any value in $\{1, 2, 3\}$.
\end{construction}

Let $Z(\mathcal{P}) \subset \left(\mathbb{Z}_{\ge 0} \times \mathbb{Z}\right)^{3(g-1)+n+b}$ be 
the set of tuples satisfying the following conditions:
\begin{enumerate}
  \item If $m_i = 0$, then $t_i \ge 0$.
  \item If the set $\{\alpha_{i_{1}}, \ldots, \alpha_{i_{q}} \}$ bounds a pair of pants in $S_{g, n}^{b}$, then the sum
  $m_{i_{1}} + \ldots + m_{i_{q}}$ is even.
\end{enumerate}
With this notation, the following theorem holds.
\begin{theorem}[Dehn, see \cite{HP}] \label{th_Den}
Consider the map  
 \begin{gather*}
   D : \mathcal{MC}^{b}_{n, g} \rightarrow Z(\mathcal{P}) \\
   \gamma \mapsto \bigl(m_1(\gamma), t_1(\gamma), \ldots, m_d(\gamma), t_d(\gamma)\bigr), 
 \end{gather*}
 where the coordinates $\bigl(m_1(\gamma), t_1(\gamma), \ldots, m_d(\gamma), t_d(\gamma)\bigr)$ are the parameters constructed above, and $d = 3(g-1)+n+b$.
 Then the map $D$ is a bijection.
\end{theorem}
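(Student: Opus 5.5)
The plan is to prove that $D$ is well defined with values in $Z(\mathcal{P})$, then to construct an explicit inverse map $Z(\mathcal{P}) \to \mathcal{MC}^{b}_{n,g}$, and finally to show that the two maps are mutually inverse.

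\textbf{Well-definedness.} First I check that $D(\gamma) \in Z(\mathcal{P})$ and that the coordinates do not depend on the representative chosen in the isotopy class. Put $\gamma$ in minimal position with $\sum_i \alpha_i$. Then $m_i = i(\gamma, \alpha_i)$ is an isotopy invariant. The parity condition holds because each arc of $\gamma \cap P_j$ has two endpoints on $\partial P_j$, which is exactly the congruence recorded at the end of the Construction. The condition $t_i \ge 0$ when $m_i = 0$ holds by definition. Invariance of $t_i$ when $m_i > 0$ is handled in the injectivity step below.

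\textbf{Inverse map.} Given a tuple $(m_i, t_i) \in Z(\mathcal{P})$, I build a multicurve in three stages.
\begin{enumerate}
\item In each pair of pants $P_j$ with boundary curves $\alpha_{i_1}, \alpha_{i_2}, \alpha_{i_3}$, the numbers $m_{i_1}, m_{i_2}, m_{i_3}$ determine a unique system of disjoint essential arcs up to isotopy, built from the model arcs $h_j$. Concretely, the number $a_{rs}$ of arcs joining $\alpha_{i_r}$ to $\alpha_{i_s}$, where $r = s$ gives arcs returning to the same boundary, is obtained by solving a linear system. The even-sum hypothesis makes the solution integral. Two arc types that would cross are never both used: the solution only uses arcs returning to $\alpha_{i_r}$ when $m_{i_r}$ exceeds the sum of the other two. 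Degenerate pants with one or two punctures or boundary curves of $S$ are treated the same way, with $m = 0$ on those boundaries.
\item In each annulus $A_i$, join the $m_i$ endpoints on $b_i \times \{-1\}$ to those on $b_i \times \{1\}$ by parallel arcs that twist $|t_i|$ times around the core in the direction given by the sign of $t_i$.
\item When $m_i = 0$, add $t_i$ parallel copies of $\alpha_i$.
\end{enumerate}
The resulting curves are disjoint and simple. I must then check three things: the result has no bigons with the curves $\alpha_i$, so that $i(\gamma, \alpha_i) = m_i$; it has no inessential components; and parallel components are merged into one curve with an integer weight. All three follow from the bigon criterion: the arcs inside each $P_j$ are essential, so any bigon would lie inside an annulus, which is impossible for monotonically twisting arcs. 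Reading off the coordinates of this multicurve returns the original tuple, which gives surjectivity.

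\textbf{Injectivity, the main obstacle.} The hard step is showing that the twist parameters are genuine isotopy invariants, so that two multicurves with equal coordinates are isotopic. After an isotopy, both multicurves agree on every $P_j$, since the arc systems there are determined by the $m_i$. They can then differ only inside the annuli, by fractional or integral twisting. The hard part is ruling out an isotopy that changes $t_i$ by sliding arcs across a pair of pants. My first approach is to use a transverse curve $\beta_i$ dual to $\alpha_i$, lying in the union of the one or two pants adjacent to $\alpha_i$. Its intersection number $i(\gamma, \beta_i)$ is an isotopy invariant and, for fixed $m$, is a function of $t_i$ that is strictly monotone, up to the finite ambiguity fixed by the choice of $b_i$ and $v_i$. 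Combined with $i(\gamma, T_{\alpha_i}\beta_i)$, this should pin down $t_i$ uniquely; this is the classical argument of \cite{HP}. If this becomes messy, the fallback is to lift to the universal cover and compare the endpoints of lifts of $\gamma$ crossing a lift of $\alpha_i$: a difference in $t_i$ shifts these endpoints by a nontrivial deck translation, contradicting isotopy.
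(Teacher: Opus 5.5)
The paper gives no proof of this statement; it is quoted from Penner--Harer \cite{HP}, so your outline has to stand on its own. Its routine parts are fine:
\begin{itemize}
\item the arc systems in each $P_j$ are determined by the $m_i$, with the even-sum condition giving integrality;
\item the bigon check against the $\alpha_i$ goes through;
\item two normalised representatives with the same coordinates can be made to coincide in every $P_j$. Their arcs in each $A_i$ then have the same endpoints and the same signed number of crossings with $v_i\times[-1,1]$, hence are isotopic rel $\partial A_i$.
\end{itemize}
Two corrections. First, in $A_i$ the $m_i$ arcs must be placed so that their \emph{total} number of crossings with $v_i\times[-1,1]$ is $|t_i|$ (fractional twisting). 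If each arc twists $|t_i|$ times, you only realise $t_i\in m_i\mathbb{Z}$ and miss, e.g., $\gamma_{(2,1)}$ on $S_0^4$. Second, the last bullet is the \emph{easy} half. Invariance of $t_i$ does not give ``equal coordinates $\Rightarrow$ isotopic'', which is free. It gives that $D$ is well defined: one isotopy class cannot be put in normal form with two different values of $t_i$. Equivalently, your reconstruction map $Z(\mathcal{P})\to\mathcal{MC}^{b}_{n,g}$ is injective. That is the real content of the theorem.

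The genuine gap is that this step is never carried out, and the one concrete claim you make about it is false. For fixed $m$'s, $i(\gamma,\beta_i)$ is not monotone in $t_i$. A full twist $T_{\alpha_i}^{\pm1}$ shifts $t_i$ by $\mp m_i$, and since $m_i>0$ we have $i(T_{\alpha_i}^k\gamma,\beta_i)\to\infty$ as $k\to\pm\infty$, so the function grows in both directions. Concretely, on $S_0^4$ the paper's own change of coordinates $m'=2|t|$ says $i(\gamma_{(m,t)},\gamma_{(2,0)})=2|t|$, an even function of $t$.

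Adding $T_{\alpha_i}\beta_i$ is the right repair. In this example $i(\gamma_{(m,t)},T_{\gamma_{(0,1)}}\gamma_{(2,0)})=2|t+m|$, and the pair $(2|t|,2|t+m|)$ determines $t$ because $m>0$. To make it a proof in general, you must:
\begin{itemize}
\item fix explicit positions of $\beta_i$ and $T_{\alpha_i}\beta_i$ relative to the normalised $\gamma$;
\item prove these positions are minimal by excluding bigons, which needs a case analysis by the arc types of $\gamma$ in the pants adjacent to $\alpha_i$ (in particular arcs returning to $\alpha_i$) and by the fractional part of the twist;
\item derive the resulting piecewise-linear formulas in $t_i$;
\item check that these formulas separate distinct values of $t_i$.
\end{itemize}
None of this is in the proposal. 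Writing ``this is the classical argument of \cite{HP}'' leaves you exactly where the paper is, citing the theorem.

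The universal-cover fallback does not close the gap as stated. A change of $t_i$ by a non-multiple of $m_i$ is not induced by any power of $T_{\alpha_i}$, so there is no deck translation relating the lifts. These fractional changes are precisely the delicate case.
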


\begin{remark}
 The Dehn--Thurston coordinates depend on the choice of the parameters
  \[\bigcup_i\bigl(\alpha_i, b_i, v_i, \{h_{j_{k}}\}_{k=1}^{s_i}\bigr).\]
\end{remark}

We shall be interested in a particular case. Let us list the features of the Dehn--Thurston coordinates for the set $\mathcal{MC}_{0}^{4}$. 
The coordinates are indexed by a pair 
$(m, t) \in 2\mathbb{N} \times \mathbb{Z} \cup \{0\} \times \mathbb{N}$.
Here $\mathbb{N}$ denotes the set of positive integers.
The pants decomposition $\mathcal{P} = \{\alpha\}$ consists of a single curve.
The set $S_{0}^{4} \setminus \mathring{U}(\alpha)$ consists of exactly two components $P_{-1}, P_1$.  
Each set $h_j$ consists of exactly one arc.
The set $Z(\mathcal{P}) = 2\mathbb{N} \times \mathbb{Z} \cup \{0\} \times \mathbb{N}$.
In this case, the Dehn--Thurston coordinates depend on the quintuple $(\alpha, b, v, h_{-1}, h_1)$.

Moreover, in what follows we shall use the fact that in any homotopy class of a simple closed curve 
on a hyperbolic surface with geodesic boundary, there exists a unique simple closed geodesic. 
This statement is a basic fact in the theory of hyperbolic surfaces; a proof can be found 
in \cite{Farb_Margalit}.  
Thus, the Dehn--Thurston coordinates parametrise \textit{integral multigeodesics}, and the 
length of an integral multicurve is identified with the length of the corresponding integral multigeodesic.
 
\begin{lemma} \label{le_ocenka_dliny_v_koordinatah_D_T}
   Let Dehn--Thurston coordinates corresponding to the quintuple $(\alpha, b, v, h_{-1}, h_1)$ 
   be fixed on the set $\mathcal{MC}_{0}^{4}$.
   Denote by $\gamma_{(m,t)}$ the integral multicurve on $S_{0}^{4}$ whose coordinates are $(m, t)$.
   Then the following inequality holds:
   \[\ell_{\gamma_{(m,t)}} \le \frac{m}{2}\left(\ell_{h_{-1}} +\ell_{h_1}\right) + |t|\ell_{\alpha}. \]
\end{lemma}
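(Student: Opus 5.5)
The geodesic length is at most the length of any curve in the same free homotopy class. So I will build an explicit closed curve representing $\gamma_{(m,t)}$ (or one curve per component, for a multicurve) and bound its length by summing the lengths of its pieces. Here $\ell_{h_{\pm1}}$ is taken to mean the length of the fixed arc $h_{\pm 1}$ (realised, say, as a shortest arc of its type with endpoints on $b\times\{\pm1\}$). I also shrink the annulus $A=U(\alpha)$ so that its width is negligible; any error this introduces is absorbed by a limiting argument.

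\textbf{Case $m=0$.} Here $\gamma_{(0,t)}=t\alpha$, and $\ell=t\ell_\alpha$, so the inequality holds trivially.

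\textbf{Case $m>0$.} Follow the construction of the coordinates.
\begin{enumerate}
\item In each pair of pants $P_{\pm1}$, the multicurve meets $P_{\pm1}$ in arcs parallel to $h_{\pm1}$. Since $P_{\pm 1}$ has two boundary components that are boundary components of $S_0^4$ and one copy of $\alpha$, each arc has both endpoints on $\alpha$. Hence there are exactly $m/2$ such arcs in each of $P_{-1}$ and $P_1$, and each has length arbitrarily close to $\ell_{h_{\pm1}}$ (take parallel copies in a thin neighbourhood).
\item Inside $A$, the curve consists of $m$ crossing arcs. I replace them by a standard picture: first the $m$ straight arcs $\{p\}\times[-1,1]$ with $p\in b$, then a twist of total amount $|t|$ spread across them. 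Concretely, the $m$ strands together must traverse $\alpha$ a total of $|t|$ times in the $S^1$ direction. By definition $|t|$ counts intersections with $v\times[-1,1]$, so realise the twisting by strands running along $\alpha$. The combined horizontal length is then about $|t|\ell_\alpha$, plus the lengths of sub-arcs of $b$, which are $O(\ell_b)$ per strand.
\end{enumerate}

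The constant $O(m\,\ell_b)$ from the sub-arcs of $b$ would spoil the clean inequality. I handle it by a choice. I choose $b$ as a tiny arc and let its length tend to $0$; or I absorb it into the shortest representatives of the arcs $h_{\pm 1}$, whose endpoints are allowed to vary on $b$. Since the right-hand side depends only on $\ell_{h_{\pm1}}$ and $\ell_\alpha$, taking the infimum over widths of $A$ and lengths of $b$ gives the stated bound.

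The main obstacle is justifying that the twisting can be distributed so that the horizontal length is only $|t|\ell_\alpha$ rather than $m|t|\ell_\alpha$. I expect the Dehn--Thurston twist $t$ counts total intersections of all $m$ strands with $v\times[-1,1]$, so the total horizontal displacement is $|t|$ laps. One way to argue this is to write $\gamma_{(m,t)}$ as the image of $\gamma_{(m,t_0)}$ with $|t_0|<m$ under a fractional twist, giving a curve of length at most $\tfrac m2(\ell_{h_{-1}}+\ell_{h_1})+|t|\ell_\alpha$ up to $\epsilon$. Concretely, each strand crossing $v\times[-1,1]$ contributes at most one lap of $\alpha$, which gives length at most $|t|\ell_\alpha+\epsilon$. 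Summing the contributions from steps 1 and 2 and letting $\epsilon\to0$ finishes the proof.
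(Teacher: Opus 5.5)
Your strategy is the same as the paper's one-line proof: build a representative of $\gamma_{(m,t)}$ from $\frac m2$ parallel copies of each of $h_{-1},h_1$ plus $m$ strands in $A$, then use that the geodesic is shortest in its class. The step that fails is the last one, where you remove the annulus-crossing and $b$-terms by shrinking $A$ and $b$ and ``taking the infimum''. The quintuple is fixed, and the right-hand side is computed from the fixed arcs $h_{\pm1}$, which end on $b\times\{\pm1\}\subset\partial A$. Shrinking $A$ forces you to lengthen each $h_{\pm1}$ by exactly the width you save. Shrinking $b$ forces you to slide their endpoints, which changes $\ell_{h_{\pm1}}$.

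These terms are genuinely present. A curve with $m=2$ meets $\alpha$ twice, and each of its two complementary arcs is at least as long as the common perpendicular $o_\pm$ from $\alpha$ to itself in the corresponding pair of pants. Take $A$ a thin metric collar of width $w$ and $h_{\pm1}=o_\pm\cap P_{\pm1}$, which is a shortest arc of its type in $P_{\pm1}$, i.e.\ your own suggested realisation. Choose $b$ to contain the four endpoints. Then $\ell_{\gamma_{(2,0)}}\ge\ell_{h_{-1}}+\ell_{h_1}+2w$, contradicting the claimed bound at $(m,t)=(2,0)$. Even in the zero-width normalisation, where $h_{\pm1}=o_\pm$ end on $\alpha$, the bound fails as soon as the feet of $o_-$ and $o_+$ on $\alpha$ differ. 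So the inequality cannot be proved for an arbitrary quintuple. What your construction honestly gives is the bound up to an additive error of order $m(w+\ell_b)$.

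Your twist bookkeeping can be made exact, and doing so pins down this error. Lift $b$ to $\mathbb{R}$, let $p_1<\dots<p_m$ and $q_1<\dots<q_m$ be the bottom and top endpoints, and set $q_{j+km}=q_j+k\ell_\alpha$ for $k\in\mathbb{Z}$. Disjoint strands must join $p_i$ to $q_{i+s}$ for a single integer $s$. By Hermite's identity $\sum_{i=1}^{m}\lfloor (i-1+s)/m\rfloor=s$, where no summand has sign opposite to $s$, the strands cross $v\times[-1,1]$ exactly $|s|$ times, so $|s|=|t|$. The total signed horizontal displacement then telescopes to $s\ell_\alpha+\sum_j q_j-\sum_i p_i$. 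Counting one lap per crossing, as you do, misses strands that shift inside $b$ without crossing $v$. Hence the only losses are the endpoint mismatch $\sum_j|q_j-p_j|$ and the $m$ crossings of $A$.

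The clean statement avoiding both is the one the paper actually uses later. For $mt\neq0$, \eqref{eq_SR} exhibits $\gamma_{(m,t)}$ as a surgery of $\frac m2\gamma_{(2,0)}$ and $|t|\alpha$, whence $\ell_{\gamma_{(m,t)}}\le\frac m2\ell_{\gamma_{(2,0)}}+|t|\ell_\alpha$. This is \eqref{eq_alpha}, with $\gamma_0$ in the role of $\alpha$; the cases $mt=0$ are direct. Consequently the lemma holds for a given quintuple exactly when $\ell_{\gamma_{(2,0)}}\le\ell_{h_{-1}}+\ell_{h_1}$, necessity being the case $(2,0)$. This holds, for instance, when the $h_{\pm1}$ end on $\alpha$ and are the two halves of the geodesic $\gamma_{(2,0)}$.

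The paper's one-line proof glosses over the same point. This is harmless there, because the lemma is never invoked, and the estimate actually used, Lemma \ref{th_conbinatorial_lenght_S_0_4}, carries an explicit $mC$ term.
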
 
\begin{proof}
   This lemma is a direct consequence of the construction of Dehn--Thurston coordinates
   described in Section \ref{RefProject_D-T_coord}, together with the fact that a simple closed geodesic
   is the shortest curve in its homotopy class.
\end{proof}

\begin{lemma} \label{le_o_harakterizacii_multicrivyh}
  Let Dehn--Thurston coordinates corresponding to the quintuple $(\alpha, b, v, h_{-1}, h_1)$ 
  be fixed on the set $\mathcal{MC}_{0}^{4}$.
  Denote by $\gamma_{(m,t)}$ the integral multicurve on $S_{0}^{4}$ whose coordinates are $(m, t)$.

  Then the integral multicurve $\gamma_{(m,t)}$ consists of
  $\Delta = \gcd\left(\frac{m}{2}, t\right)$ parallel
  copies of the simple closed geodesic
  $\gamma_{\left(\frac{m}{\Delta}, \frac{t}{\Delta}\right)}$.
\end{lemma}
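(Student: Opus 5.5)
The plan is to prove the lemma in three steps: first establish that Dehn--Thurston coordinates are additive over parallel copies, then show the coordinates of a single simple closed curve are coprime, and finally combine these with the bijection of Theorem \ref{th_Den}.

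\textbf{Step 1: additivity over parallel copies.} Let $\beta$ be a simple closed essential curve on $S_0^4$ other than $\alpha$, with coordinates $(m_\beta, t_\beta)$. Consider the integral multicurve $k\beta$, that is, $k$ parallel copies of $\beta$. I claim that
\[
D(k\beta) = (k m_\beta,\, k t_\beta).
\]
For the $m$-coordinate this is immediate: $i(k\beta,\alpha) = k\, i(\beta,\alpha)$, since parallel copies in minimal position with $\alpha$ remain in minimal position. For the twist coordinate, put $\beta$ in the standard form of the construction, with arcs near $h_{\pm 1}$ and uniform twisting inside $A$. Then $k$ parallel copies are obtained by thickening this standard representative. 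Each arc of $k\beta \cap A$ twists in the same direction as the arcs of $\beta$. The minimal number of crossings with $v\times[-1,1]$ is therefore $k|t_\beta|$, since the minimisation is done arc by arc and arcs twisting in both directions cannot occur. The case $m_\beta = 0$, i.e. $\beta = \alpha$, is handled directly by the convention $t = k$.

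\textbf{Step 2: coordinates of a single curve are coprime.} Let $\beta$ be a simple closed curve with $m_\beta > 0$. I claim that $\gcd(m_\beta/2, t_\beta) = 1$. I would use the classical identification of simple closed curves on $S_0^4$ with slopes $p/q$, obtained via the torus double cover, or equivalently via the pillowcase model. In that model, a connected multicurve whose arcs all follow $h_{\pm1}$ and which crosses $\alpha$ exactly $m$ times with twist $t$ corresponds to a linear foliation of slope determined by $(m/2, t)$. Its number of components is $\gcd(m/2, t)$. Concretely, passing through the annulus $A$ induces a rotation by $t$ on the $m/2$ strands pairing the endpoints of the arcs $h_{-1}$ and $h_{1}$. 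The orbits of the rotation $j \mapsto j + t \pmod{m/2}$ correspond to the components, and there are $\gcd(m/2, t)$ of them. A connected curve must therefore have coprime coordinates.

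\textbf{Step 3: conclusion.} Take $(m,t)$ with $m>0$ and set $\Delta=\gcd(m/2,t)$. The pair $(m/\Delta, t/\Delta)$ lies in $Z(\mathcal P)$, because $m/\Delta$ is even. Let $\beta$ be the multicurve with these coordinates. By the component count from Step 2, $\beta$ has $\gcd(m/(2\Delta), t/\Delta) = 1$ component, so it is a simple closed curve. By Step 1, $D(\Delta\beta) = (m,t)$. Injectivity of $D$ from Theorem \ref{th_Den} then gives $\gamma_{(m,t)} = \Delta\beta$. The case $m=0$ is trivial: the multicurve is $t\alpha$, consistent with reading the lemma as $\gamma_{(0,1)}$ with $\Delta = t$.

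The main obstacle is the component count in Step 2, namely the careful check that the gluing permutation of strands across $A$ is exactly a cyclic shift by $t$ on $m/2$ strands. This requires tracking how each of $h_{-1}$ and $h_1$, being a single arc with both endpoints on $b\times\{\pm1\}$, pairs up its $m$ endpoint strands in a nested fashion. I would first attempt this by drawing the pillowcase picture explicitly.
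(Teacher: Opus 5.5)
Your proposal is correct and is essentially the paper's argument. The paper only says that the lemma follows directly from the construction of Dehn--Thurston coordinates, and your additivity step, the component count of the normal form, and the appeal to injectivity of $D$ are exactly the details it leaves implicit.

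One correction to Step~2. Under the paper's convention, $|t|$ counts all crossings with $v\times[-1,1]$, so the annulus $A$ carries $m$ strands shifted by $t$, not $m/2$ strands. The nested copies of $h_{\pm1}$ pair endpoint $i$ with $m+1-i$. Hence components correspond to orbits on $\mathbb{Z}/m$ of the group generated by $i\mapsto i+2t$ and $i\mapsto 1-i$. The orbits of the first map are the residue classes modulo $\gcd(2t,m)=2\Delta$, and the second acts on these without fixed points, so there are exactly $\Delta=\gcd(m/2,t)$ components. Your count is therefore right, even though it does not come from a literal cyclic shift on $m/2$ strands.
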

\begin{proof}
  This fact follows directly from the construction of Dehn--Thurston coordinates.
\end{proof}

 In what follows, we shall be particularly interested in the two curves $\gamma_{(2,0)}$ and $\gamma_{(0,1)}$,
 which are illustrated in Figure \ref{fig_gamma_2_0_gamma_0_1}.
 
 \begin{figure}[h]
  \centering
  \begin{tikzpicture} 
    \draw (1.1, -0.25) -- node[below] {$\gamma_{(0, 1)}$} cycle ;
    \draw (0.4, 1.4) -- node[left] {$\gamma_{(2, 0)}$} cycle ;

    \draw [very thick] (-2, 2)  to [ out = -90,  in = 90]  (-2, -2)
                                to [out = -20,  in = 200]  (-1, -2)
                                to [out = 15,  in = 165]  (1, -2)
                                to [out = -20,  in = 200]  (2, -2)
                                to [out = 90,  in = -90]  (2, 2);

    \draw [very thick] (-2, 2)  to [out = -20,   in = 200]  (-1, 2)
                                to [out = -15,  in = 195]  (1,  2)
                                to [out = -20,   in = 200]  (2,  2);
   \draw [very thick] (-2, 2)   to [out = 20,   in = 160]  (-1, 2);
   \draw [very thick] (1,  2)   to [out = 20,   in = 160]  (2,  2);

   \draw [very thick, dashed] (-2, -2)   to [out = 20,   in = 160]  (-1, -2);
   \draw [very thick, dashed] (1,  -2)   to [out = 20,   in = 160]  (2,  -2);

   \draw [very thick] (-2, 0)   to [out = -20,  in = 200] (2, 0);
   \draw [very thick, dashed] (-2, 0)   to [out = 20,   in = 160]  (2, 0);

   \draw [thick] (-2, 0.6)   to [out = -20,  in = 200] (2, 0.6);
   \draw [ thick, dashed] (-2, 0.6)   to [out = 20,   in = 160]  (2, 0.6);

   \draw [ thick] (-2, -0.6)   to [out = -20,  in = 200] (2, -0.6);
   \draw [ thick, dashed] (-2, -0.6)   to [out = 20,   in = 160]  (2, -0.6);

   \draw [very thick] (-0.5, -0.39)   to [out = 90,  in = -120] (0.7, 1.92);
   \draw [very thick] (-0.5, -0.39)   to [out = -90,  in = 120] (0.7, -1.92);

   \draw [very thick, dashed] (0.7, 1.92)   to [out = -15,  in = -165] (2, 1.8);
   \draw [very thick, dashed] (0.7, -1.92)   to [out = 15,  in = 165] (2, -1.8);

   \draw [very thick] (-0.25, -0.39)   to [out = 90,  in = -120] (2, 1.8);
   \draw [very thick] (-0.25, -0.39)   to [out = -90,  in = 120] (2, -1.8);
    
   \draw [ultra thick , color = red] (-0.7, -0.35)   to [out = -9,  in = 184] (0.3, -0.39);
   \draw [ thick , dashed,   color = red] (-0.7, -0.95)   to [out = 90,  in = -90] (-0.7, 0.3);
   \draw [ thick , dashed,   color = red] (0.3, -0.95)   to [out = 90,  in = -90] (0.3, 0.3);
    
 \end{tikzpicture}

   \caption{The curves $\gamma_{(2,0)}$ and $\gamma_{(0,1)}$}
   \label{fig_gamma_2_0_gamma_0_1}
 \end{figure}
  We now describe, in terms of the coordinates $(m,t)$, the action of the left Dehn twist along the 
  geodesic $\gamma_{(0,1)}$ and the left half-twist along the geodesic $\gamma_{(2,0)}$ on integral
  multicurves.
 \begin{lemma} \label{le_deystvie_T_gamma_0_1}
    Let Dehn--Thurston coordinates corresponding to the quintuple $(\alpha, b, v, h_{-1}, h_1)$ be fixed 
    on the set $\mathcal{MC}_{0}^{4}$.
    Denote by $\gamma_{(m,t)}$ the integral multicurve on $S_{0}^{4}$ whose coordinates are $(m, t)$.
    Let $T_{\gamma_{(0,1)}}$ and $T_{\gamma_{(2,0)}}$ denote the left Dehn twists along the simple closed 
    curves $\gamma_{(0,1)}$ and $\gamma_{(2,0)}$.
    Then
 \begin{align}
    &T^{\pm 1}_{\gamma_{(0,1)}}\left(\gamma_{(m, t)}\right) = \gamma_{(m, t \mp m)}, \label{align_1} \\ 
    &T^{\pm1}_{\gamma_{(2,0)}}\left(\gamma_{(m, t)}\right) = \gamma_{\bigl(|m \pm 4t|, t \cdot \theta(m \pm 4t)\bigr)}, \label{align_2}
 \end{align}
 \begin{equation*}
          \text{ where $\theta(x) = $}  
       \begin{cases}
           1,& x > 0\\
          -1, &  x \le 0\\
       \end{cases}
 \end{equation*}
 \end{lemma}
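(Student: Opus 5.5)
The plan is to prove \eqref{align_1} directly from the construction of the twist parameter. For \eqref{align_2} I would reduce to linear algebra via the classical ``pillowcase'' model of $S_0^4$. By Lemma \ref{le_o_harakterizacii_multicrivyh}, $\gamma_{(m,t)}$ is $\Delta$ parallel copies of a primitive curve, and both sides of each formula are homogeneous of degree one in $(m,t)$. It therefore suffices to treat a single simple closed curve, with $\gcd(m/2,t)=1$ or $(m,t)=(0,1)$.

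For \eqref{align_1}: if $m=0$, then $\gamma_{(0,t)}$ consists of copies of $\alpha=\gamma_{(0,1)}$, which the twist fixes, so the formula holds trivially. If $m>0$, I would choose the twist $T_{\gamma_{(0,1)}}$ to be supported in the annulus $A=\alpha\times[-1,1]$. It leaves the arcs in $P_{\pm1}$ and the points $\gamma\cap(b\times\{\pm1\})$ unchanged, so $m$ is preserved. Each of the $m$ strands of $\gamma\cap A$ acquires one extra full turn around $\alpha$, so the minimal number of crossings with $v\times[-1,1]$ changes by exactly $m$. The direction of the change is fixed by the sign convention for $t$: a left twist moves $t$ to $t-m$ and its inverse moves $t$ to $t+m$. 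One must also check that no cancellation lowers the minimum. This holds because all strands twist in the same direction, as noted in the construction.

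For \eqref{align_2}: identify $S_0^4$, topologically, with the quotient $T^2/\{\pm1\}$, where the four boundary components come from the four fixed points of the involution. Choose the identification so that $\alpha$ is the image of the slope $(1,0)$ and $\gamma_{(2,0)}$ is the image of the slope $(0,1)$. Classical theory, or a direct comparison of the construction for the curves in Figure \ref{fig_gamma_2_0_gamma_0_1}, then shows that the map $\gamma_{(m,t)}\mapsto \pm(m/2,\,t)\in(\mathbb{Z}^2\setminus 0)/\pm$ is a bijection onto essential curves. It is equivariant for the action of twists: the preimage of a curve on the sphere is two parallel curves on the torus, so a twist about it lifts to the square of the corresponding torus twist. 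Hence $T_{\gamma_{(0,1)}}^{\pm1}$ acts by $(q,p)\mapsto(q,p\mp 2q)$, which agrees with \eqref{align_1} with $q=m/2$, $p=t$ and calibrates the sign conventions. By the symmetric argument, $T_{\gamma_{(2,0)}}^{\pm1}$ acts by $(q,p)\mapsto(q\pm 2p,\,p)$, that is, $m\mapsto m\pm4t$ with $t$ unchanged. Finally, normalize the representative in $(\mathbb{Z}^2\setminus 0)/\pm$ so that it lies in $Z(\mathcal P)$. If $m\pm4t>0$, no change is needed. If $m\pm4t<0$, replace $(q,p)$ by $(-q,-p)$, which gives $(|m\pm4t|,-t)$. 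If $m\pm4t=0$, the curve is a copy of $\alpha$, and we need the coordinate $(0,|t|)$, which equals $(0,-t)$ provided the sign of $t$ in this case is as forced by the orientation conventions. This is exactly the factor $\theta(m\pm4t)$ in \eqref{align_2}.

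The main obstacle is making the identification of the Dehn--Thurston coordinates $(m/2,t)$ with the linear pillowcase coordinates precise. Its signs must be compatible with the chosen $(\alpha,b,v,h_{-1},h_1)$ and with ``left'' twists, and the edge case $m\pm4t=0$ needs separate care. I would first check the identification on the generators $\gamma_{(2,0)},\gamma_{(0,1)}$ and on $\gamma_{(2,\pm1)}$ by drawing them. I would then use \eqref{align_1}, proved independently, together with the fact that twists act by linear maps, to propagate the identification to all curves, since every curve is reached from these by twists about $\gamma_{(0,1)}$ and $\gamma_{(2,0)}$.
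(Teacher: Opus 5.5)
Your route differs from the paper's. The paper never leaves Dehn--Thurston coordinates. It takes dual coordinates $(m',t')$ built on $\gamma_{(2,0)}$ and uses the surgery description $\gamma_{(m,t)}=\mathrm{SR}\bigl(t\gamma_{(0,1)},\tfrac m2\gamma_{(2,0)}\bigr)$ (for $t>0$, with the order swapped for $t<0$). From this it gets the change of coordinates $m'=2|t|$, $t'=-\theta(t)\tfrac m2$, and then conjugates \eqref{align_1}. Your lift to the torus, where both twists act by level-$2$ unipotent matrices, is a sound and more conceptual framework. It shows that the relative sign in \eqref{align_2} is forced once \eqref{align_1} is known, and it explains the three orbits used later in Theorem~\ref{th_resenie_zadachi_S_0_4}.

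However, all the content sits in the identification $\gamma_{(m,t)}\leftrightarrow\pm(m/2,t)$ for \emph{every} $(m,t)$, and the method you propose for it is circular. $T_{\gamma_{(0,1)}}$ preserves $m$, so \eqref{align_1} can only transport the identification within a level set $\{m=\mathrm{const}\}$. Even there it only says that the correspondence commutes with $t\mapsto t-m$, so you would need a separately checked curve in each admissible residue class of $t$ modulo $m$. To change $m$ you must apply $T_{\gamma_{(2,0)}}$, and knowing the Dehn--Thurston coordinates of $T_{\gamma_{(2,0)}}(\gamma)$ is exactly \eqref{align_2}. Starting from $\gamma_{(2,0)},\gamma_{(0,1)},\gamma_{(2,\pm1)}$, you only ever reach curves with $m\in\{0,2\}$.

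The missing ingredient is a direct description of $\gamma_{(m,t)}$ valid for all $(m,t)$, which is precisely what the paper's surgery formula supplies (the paper also only asserts it as easy to see from the construction). Lift that description to the torus. The right-turning resolution of $|t|$ copies of $\gamma_{(0,1)}$ with $\tfrac m2$ copies of $\gamma_{(2,0)}$ becomes a consistent smoothing of $2|t|$ parallel curves of one slope with $m$ parallel curves of the other. This yields a multicurve in the class $\pm(m,2t)$, i.e. $2\gcd(\tfrac m2,t)$ parallel copies of the primitive curve in direction $(m/2,t)$. With that in hand, your linear-algebra argument closes.

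Your treatment of the case $m\pm4t=0$ also does not work as hoped. For $T^{-1}_{\gamma_{(2,0)}}$ this case occurs with $t=m/4>0$. The correct image is then $\gamma_{(0,t)}$, while $t\,\theta(0)=-t$ is not even an admissible coordinate, since $m=0$ requires $t\ge 0$. So the sign is not ``forced'' there, and the stated formula holds in that case only under the convention $\gamma_{(m,t)}=\gamma_{(-m,-t)}$, which the paper adopts later.
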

\begin{proof}
Formula \eqref{align_1} is a direct consequence of the construction of Dehn--Thurston coordinates described
in Section \ref{RefProject_D-T_coord}, together with the fact that the simple closed geodesic $\alpha$ 
coincides with the integral multicurve $\gamma_{(0,1)}$ (the geodesic $\alpha$ was introduced in the 
construction of Dehn--Thurston coordinates; see Section \ref{RefProject_D-T_coord}).

We now turn to the proof of formula \eqref{align_2}.
The curve $\alpha' = \gamma_{(2, 0)}$ defines a pants decomposition of the surface $S_{0}^{4}$. 
We associate to this decomposition new Dehn--Thurston coordinates $(m', t')$ such that 
the coordinates of the curve $\alpha$ in this system are $(2, 0)$. We denote by $\gamma'_{(m', t')}$ 
the integral multicurve whose coordinates in the primed coordinate system are $(m', t')$.

We introduce the right surgery function $\mathrm{SR}: \mathcal{MC}_{0}^{4} \times \mathcal{MC}_{0}^{4} \rightarrow \mathcal{MC}_{0}^{4}$.
The action of this function on an ordered pair of integral multicurves $\left(a_1\gamma_1, a_2 \gamma_2\right)$ 
is as follows. Consider the union
of integral multicurves $a_1\gamma_1 \cup a_2 \gamma_2$,
placed in minimal position. Note that any element of $\mathcal{MC}_{0}^{4}$ 
can be represented as several parallel 
copies of a simple closed curve.

If $a_1 \ge a_2$, we resolve the intersections as shown in Figure \ref{fig_deystvie_psi}. 
Here the arcs of the integral multicurve $a_2\gamma_2$ turn to the right when approaching 
$a_1 \gamma_1$. For $a_1 < a_2$, we proceed analogously, but now $a_1 \gamma_1$ turns to 
the left when approaching $a_2\gamma_2$.  
The resulting integral multicurve is denoted by $\mathrm{SR}(a_1\gamma_1, a_2 \gamma_2)$. 
\begin{figure}[h] 
        \centering
    \begin{tikzpicture} 
       \draw (-2.8, 2) -- node[left] {$a_2 \gamma_2$} cycle ;
       \draw (-1.2, 1.5) -- node[right] {$a_1 \gamma_1$} cycle ;
       
       \draw (4.8, 1.5) -- node[right] {$\mathrm{SR}(a_1\gamma_1, a_2 \gamma_2)$} cycle ;

       \draw [ thick, ->] (0.3, 0) -- (1.5, 0);

       \draw [ thick, ] (-2.8, -2.5) -- (-2.8, 2.5);
       \draw [ thick, ] (-2.65, -2.5) -- (-2.65, 2.5);
       \draw [ thick, ] (-2.5, -2.5) -- (-2.5, 2.5);
       \draw [ thick, ] (-2.35, -2.5) -- (-2.35, 2.5);
       \draw [ thick, ] (-2.2, -2.5) -- (-2.2, 2.5);
       \draw [ thick, ] (-2.05, -2.5) -- (-2.05, 2.5);
       \draw [ thick, ] (-1.9, -2.5) -- (-1.9, 2.5);
     
       \draw [ thick, ] (-4.7, 0) -- (0, 0);
       \draw [ thick, ] (-4.7, 0.15) -- (0, 0.15);
       \draw [ thick, ] (-4.7, 0.3) -- (0, 0.3);
       \draw [ thick, ] (-4.7, 0.45) -- (0, 0.45);
       \draw [ thick, ] (-4.7, 0.6) -- (0, 0.6);
       \draw [ thick, ] (-4.7, 0.75) -- (0, 0.75);
       \draw [ thick, ] (-4.7, 0.9) -- (0, 0.9);
       \draw [ thick, ] (-4.7, 1.05) -- (0, 1.05);

       \draw [ thick, ] (-4.7, -0.15) -- (0, -0.15);
       \draw [ thick, ] (-4.7, -0.3) -- (0, -0.3);
       \draw [ thick, ] (-4.7, -0.45) -- (0, -0.45);
       \draw [ thick, ] (-4.7, -0.6) -- (0, -0.6);
       \draw [ thick, ] (-4.7, -0.75) -- (0, -0.75);
       \draw [ thick, ] (-4.7, -0.9) -- (0, -0.9);
       \draw [ thick, ] (-4.7, -1.05) -- (0, -1.05);

       \draw [ thick, ] (1.7, 0.25) --            (3.2, 0.25)-- (4.25, 1.25) -- (4.25, 2.5);
       \draw [ thick, ] (1.7, 0.4) --             (3.2, 0.4)-- (4.1, 1.25) -- (4.1, 2.5);
       \draw [ thick, ] (1.7, 0.55) --            (3.2, 0.55)-- (3.95, 1.25) -- (3.95, 2.5);
       \draw [ thick, ] (1.7, 0.7) --   (3.2, 0.7) -- (3.8, 1.25) -- (3.8, 2.5);
       \draw [ thick, ] (1.7, 0.85) --            (3.2, 0.85)-- (3.65, 1.25) -- (3.65, 2.5);
       \draw [ thick, ] (1.7, 1.0) --             (3.2, 1.0) -- (3.5,  1.25) -- (3.5, 2.5);
       \draw [ thick, ] (1.7, 1.15) --            (3.2, 1.15) -- (3.35, 1.25) -- (3.35, 2.5);

       \draw [ thick, ] (3.35, -2.5) -- (3.35, -1.25) --           (4.4, -0.25) -- (6, -0.25);
       \draw [ thick, ] (3.5, -2.5)  -- (3.5, -1.25)  --           (4.4, -0.4) -- (6, -0.4);
       \draw [ thick, ] (3.65, -2.5) -- (3.65, -1.25) --           (4.4, -0.55) -- (6, -0.55);
       \draw [ thick,] (3.8, -2.5) -- (3.8, -1.25) -- (4.4, -0.7) -- (6, -0.7);
       \draw [ thick, ] (3.95, -2.5) -- (3.95, -1.25) --           (4.4, -0.85) -- (6, -0.85);
       \draw [ thick, ] (4.1, -2.5) -- (4.1, -1.25) --            (4.4, -1.0) -- (6, -1.0);
       \draw [ thick, ] (4.25, -2.5) -- (4.25, -1.25) --           (4.4, -1.15) -- (6, -1.15);

       \draw [ thick, ] (1.7, 0) -- (3.2, 0) --     (4.4, 1.05) -- (6, 1.05);
       \draw [ thick, ] (1.7, -0.15) --       (3.2, -0.15) -- (4.4, 0.9) -- (6, 0.9);
       \draw [ thick, ] (1.7, -0.3) --        (3.2, -0.3) --  (4.4, 0.75) -- (6, 0.75);
       \draw [ thick, ] (1.7, -0.45) --       (3.2, -0.45) -- (4.4, 0.6) -- (6, 0.6);
       \draw [ thick, ] (1.7, -0.6) --        (3.2, -0.6) --  (4.4, 0.45) -- (6, 0.45);
       \draw [ thick, ] (1.7, -0.75) --       (3.2, -0.75) -- (4.4, 0.3) -- (6, 0.3);
       \draw [ thick, ] (1.7, -0.9) --        (3.2, -0.9 ) -- (4.4, 0.15) -- (6, 0.15);
       \draw [ thick, ] (1.7, -1.05) --       (3.2, -1.05) -- (4.4, 0) -- (6, 0) ;
       
    \end{tikzpicture}
     \caption{The action of $\mathrm{SR}(\cdot)$.}
 
     \label{fig_deystvie_psi}
 \end{figure}
It is easy to see that for $m > 0$, the following equalities hold:
\begin{equation} \label{eq_SR} 
    \gamma_{(m, t)} =   \begin{cases}
            \mathrm{SR}\left(t  \gamma_{(0, 1)}, \frac{m}{2} \gamma_{(2, 0)}\right),&  t > 0;\\
            \mathrm{SR}\left( \frac{m}{2} \gamma_{(2, 0)}, |t| \gamma_{(0, 1)}\right),&  t < 0.\\
       \end{cases}
\end{equation}
Since $\gamma'_{(0, 1)} = \gamma_{(2, 0)}$ and $\gamma'_{(2, 0)} = \gamma_{(0, 1)}$, the analogous statement in the primed coordinate system for $m' > 0$ is
\begin{equation*}  
       \gamma'_{(m', t')} =
        \begin{cases}
           \mathrm{SR}\left(t'  \gamma_{(2, 0)}, \frac{m'}{2} \gamma_{(0, 1)}\right), &  t' > 0;\\
           \mathrm{SR}\left(\frac{m'}{2} \gamma_{(0, 1)}, |t'| \gamma_{(2, 0)}\right), &  t' < 0.\\
       \end{cases}
\end{equation*}
Combining the last two systems, we obtain the following change of coordinates formula for $m t \neq 0$:
\begin{equation*}  
       \begin{cases}
           m' = 2|t|,\\
           t' = -\theta(t) \frac{m}{2}. \\
       \end{cases}
\end{equation*}
A direct check shows that the same change of coordinates formula holds when $m t = 0$.
According to formula \eqref{align_1}, in the new coordinates the twist $T_{\gamma_{(2, 0)}}$ acts as follows:
\[
      T_{\gamma_{(2, 0)}}^{\pm 1}\bigl(\gamma'_{(m', t')}\bigr) = \gamma'_{\left(m', t' \mp m'\right)}.
  \] 
To complete the proof of the lemma, it remains to apply the change of coordinates formula obtained above.
\end{proof}

\subsection{Cycle complex} \label{Complex_B_B_M}
We shall need an analogue of the cycle complex of Bestvina, Bux and Margalit for surfaces with boundary.
In the case of the torus with two boundary components, it was constructed in the original paper \cite{Bestvina}.
Since a detailed construction for arbitrary $S_{g,n}^{b}$ appears to be absent from the literature, we provide 
it in this section.

The term \textit{multicurve} used in the paper of Bestvina, Bux and Margalit \cite{Bestvina}
differs from the term integral multicurve in \cite{Mirz}.
Bestvina, Bux and Margalit call a \textit{multicurve} a nonempty finite collection of pairwise disjoint,
 pairwise non-isotopic, essential,
simple closed oriented curves (without attaching multiplicities to them). To avoid confusion,
we shall always refer to a multicurve in the sense of Bestvina, Bux and Margalit as an 
\textit{oriented multicurve}.
By essential curves we mean curves which are homotopically nontrivial and not homotopic to 
a loop around a puncture or a boundary component. In the definition from \cite{Mirz}, 
which we used in Section \ref{RefProject-Background_material}, the collection was replaced 
by a sum and finite linear combinations of curves with positive coefficients were allowed.
In this section, we follow the terminology of Bestvina, Bux and Margalit.

Let $S_{g,n}^{b}$ be an oriented surface with $g \ge 1$.
Denote by $\{\delta_i\}_{i=1}^{n+b}$ the set of pairwise distinct isotopy classes
of geodesics isotopic to boundary components or to loops around punctures.
We denote by $[\gamma] \in H_1(S_{1,2}, \mathbb{Z})$ the homology class of a curve $\gamma$.
Fix a primitive homology class $x \in H_1(S_{g,n}^{b}, \mathbb{Z}) / \langle [\delta_1], \ldots, [\delta_{n+b}] \rangle$.

Let $\mathcal{C}$ denote the set of isotopy classes of all oriented non-separating simple closed curves
on $S_{g,n}^{b}$. Consider the infinite-dimensional vector space $\mathbb{R}^{\mathcal{C}}$ consisting 
of finite formal linear combinations of elements of $\mathcal{C}$. Note that, in contrast to Definition \ref{def_multicurve}, 
we are now considering linear combinations of \textit{oriented} simple closed curves.

We define the \textit{cycle complex} $\mathcal{B}_{g,n}^{b} = \mathcal{B}_{g,n}^{b}(x)$ as the subset of $\mathbb{R}^{\mathcal{C}}$ consisting of all 
finite linear combinations $\gamma = \sum_{i=1}^{k} n_i \gamma_i$ (called \textit{cycles}) satisfying the following properties:
\begin{enumerate}
    \item $n_1, \ldots, n_k$ are positive integers;
    \item $\gamma_1, \ldots, \gamma_k$ are pairwise non-isotopic and pairwise disjoint oriented simple closed curves;
    \item $n_1 [\gamma_1] + \cdots + n_k [\gamma_k] = x$;
    \item The homology classes $[\gamma_1], \ldots, [\gamma_k]$ satisfy no \textit{one-sided relations}, that is, relations of the form $\sum_{i \in I} [\gamma_i]$, where $I$ is a nonempty subset of $\{1, \ldots, k\}$.
\end{enumerate}
A cycle $\gamma \in \mathcal{B}_{g,n}^{b}$ is called a \textit{basis cycle} for $x$ if the homology classes of the curves $\gamma_1, \ldots, \gamma_k$ are linearly independent 
in the group $x \in H_1(S_{g,n}^{b}, \mathbb{Z}) / \langle [\delta_1], \ldots, [\delta_{n+b}] \rangle$. Since the class $x$
is integral, the coefficients $n_i$ of any basis cycle for $x$ are positive integers.

We agree that for oriented multicurves, the notation $N \subset M$ means that the oriented multicurve $N$
is a formal sum of some components of the oriented multicurve $M$
with the same orientations that they have in $M$. 
For each cycle $\gamma \in \mathcal{B}_{g,n}^{b}$, the formal sum
of the oriented curves $\gamma_i$ that appear in $\gamma$ with nonzero coefficients
is an oriented multicurve. We call this oriented multicurve the
\textit{support} of the cycle $\gamma$.
Let $\mathcal{M}$ denote the set of all oriented multicurves on $S_{g,n}^{b}$
satisfying the following two conditions:
\begin{enumerate}
    \item For each component $\alpha$ of the oriented multicurve $M$, there exists a basis cycle
     for $x$ whose support is contained in $M$ and contains the curve $\gamma$.
    \item The homology classes of the components of the oriented multicurve $M$ satisfy no one-sided relation.
\end{enumerate}

Similarly to the closed surface case treated in \cite{Bestvina}, an oriented
multicurve $M$ belongs to $\mathcal{M}$ if and only if it
is the support of some cycle $\gamma \in \mathcal{B}_{g,n}^{b}$. 
Moreover, for each oriented
multicurve $M \in \mathcal{M}$, the set $P_M$ consisting of all cycles
$\gamma \in \mathcal{B}_{g,n}^{b}$ whose supports are contained in $M$ is a 
finite-dimensional convex polytope in $\mathbb{R}^{\mathcal{C}}$. 
The vertices of this polytope are precisely the basis cycles for $x$ whose supports are contained in $M$.
The convex polytopes $P_M$, where $M$ ranges over $\mathcal{M}$, form
a regular cell decomposition of the space $\mathcal{B}_{g,n}^{b}$.

We now define the dimension of a cell of the complex $\mathcal{B}_{g,n}^{b} = \mathcal{B}_{g,n}^{b}(x)$ 
and compute the dimension of the complex. Given $M \in \mathcal{M}$, let $|M|$ 
denote the number of curves in the oriented multicurve $M$,
let $D = D(M)$ be the dimension of the linear span of the images of the simple curves from $M$ in 
$H_1(S_{g,n}^{b}, \mathbb{Z}) / \langle [\delta_1], \ldots, [\delta_{n+b}] \rangle$, 
let $N$ be the number of components of $S \setminus M$, and let $B = B(M)$ 
be the dimension of the corresponding cell in $\mathcal{B}_{g,n}^{b}$. We shall need the following lemma, 
which generalizes Lemma $2.1$ of \cite{Bestvina} to the case of a not necessarily closed surface.
Its proof carries over verbatim to this setting.
\begin{lemma} \label{le_o_razmernosti_B_1_2}
    For any oriented multicurve $M \in \mathcal{M}$, we have $B = |M| - D = N - 1$,
    and the dimension of $\mathcal{B}_{g,n}^{b}$ is $2g - 2 + n + b$.
\end{lemma}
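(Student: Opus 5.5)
We follow the argument of Lemma 2.1 in \cite{Bestvina}, working throughout in the quotient $\bar H = H_1(S_{g,n}^{b}, \mathbb{Z}) / \langle [\delta_1], \ldots, [\delta_{n+b}] \rangle$, which is the first homology of the closed surface $\bar S$ obtained by capping all punctures and boundary components with disks. The plan is to establish the two equalities $B = |M| - D$ and $|M| - D = N - 1$ separately, and then to maximize $N - 1$ over $M \in \mathcal{M}$.

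\emph{Step 1: $B = |M| - D$.} The cell $P_M$ consists of the vectors $(n_\gamma)_{\gamma \in M}$ with $n_\gamma \ge 0$ and $\sum_\gamma n_\gamma [\gamma] = x$ in $\bar H \otimes \mathbb{R}$, restricted to the face structure described above. The affine solution set of the linear equation has dimension $|M| - D$. Condition (1) in the definition of $\mathcal{M}$ says that every component lies in the support of some basis cycle. Averaging these basis cycles gives a cycle whose coefficients are all strictly positive. So $P_M$ contains a point in the relative interior of the positive orthant slice, and therefore has full dimension $|M| - D$.

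\emph{Step 2: $|M| - D = N - 1$.} Consider the map $\partial\colon \mathbb{R}^{\{\text{components of } S\setminus M\}} \to \mathbb{R}^{M}$ that sends a region $R$ to its oriented boundary, namely $\sum_\gamma \epsilon_{R,\gamma}\,\gamma$ with signs determined by the side of $\gamma$ on which $R$ lies. Punctures and boundary components are simply ignored, because they are zero in $\bar H$.

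By standard Mayer--Vietoris or cellular homology on $\bar S$ cut along $M$, the kernel of $\mathbb{R}^{M} \to \bar H\otimes\mathbb{R}$ is exactly the image of $\partial$. Each capped region has boundary zero in homology, and conversely a relation among disjoint curves in $H_1(\bar S)$ comes from a $2$-chain that is constant on the regions of $\bar S \setminus M$.

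The kernel of $\partial$ is one-dimensional, spanned by the sum of all regions. Indeed, every curve of $M$ is non-separating and is adjacent to two distinct regions or to the same region twice, so a vector in $\ker\partial$ is constant across each curve, and $\bar S$ is connected. Hence $|M| - D = \dim \operatorname{im}\partial = N - 1$.

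The main point to check here is that capping punctures and boundaries does not create extra relations beyond the region boundaries. This holds precisely because we quotient by the $[\delta_i]$, and the $\delta_i$ lie inside the regions.

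\emph{Step 3: the dimension of $\mathcal{B}_{g,n}^{b}$.} By Steps 1 and 2 it suffices to maximize $N - 1$. We use the Euler characteristic. Cutting along $M$ preserves $\chi(S) = 2 - 2g - n - b$. No region is a disk or a once-punctured disk, since the curves are essential. No region is an annulus with one boundary on $M$ and the other a puncture or boundary component, again by essentiality. No region is an annulus between two curves of $M$, since the curves are pairwise non-isotopic.

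Hence every region has $\chi \le -1$, and $N \le -\chi(S) = 2g - 2 + n + b$. This bound would give dimension $N - 1 \le 2g - 3 + n + b$, which is one less than the claimed value, so the count needs adjusting. I expect this bookkeeping to be the main obstacle. The resolution is that regions may also have $\chi = 0$: an annulus between two parallel copies is excluded, but a pair of pants region only contributes $\chi = -1$, and we must take $D \ge 1$ into account.

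Redoing the count via Step 1 directly, we have $B = |M| - D \le (3g - 3 + n + b) - 1$ for a pants decomposition, but condition (4) forbids many of these configurations. So the bound should instead come from $B = N - 1$ with each region containing at least one $\delta_i$ or some genus. We then exhibit an explicit $M$ attaining $2g - 2 + n + b$: take a chain of curves all homologous to $x$ modulo the $[\delta_i]$, cutting the surface into $2g - 2 + n + b + 1$ pieces, one for each extra handle and one for each puncture or boundary component.

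I would verify the exact extremal count by checking it against the known case $\mathcal{B}_{1,2}$, where the formula gives dimension $2$.
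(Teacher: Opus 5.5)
\textbf{Steps 1 and 2 are fine; the gap is Step 3, where the statement itself is off by one.} Steps 1 and 2 are correct. They are exactly the argument of Lemma~2.1 of Bestvina--Bux--Margalit, which is all the paper appeals to (``its proof carries over verbatim'').

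In Step 3 you try to reach the value $2g-2+n+b$, but your own Euler characteristic count, which is correct, shows this value cannot be attained. Cutting along $M$ preserves $\chi$, and every complementary region has $\chi\le -1$. Your exclusions are right, with one correction: an annulus between $\gamma$ and its reverse $\bar\gamma$ is ruled out by the absence of one-sided relations, not by non-isotopy. Hence $N\le -\chi(S)=2g-2+n+b$, so $B=N-1\le 2g-3+n+b$ for every cell.

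None of your proposed adjustments survives:
\begin{itemize}
\item regions with $\chi=0$ are exactly the ones you just excluded;
\item $D\ge 1$ plays no role in bounding $N$ (for a pants decomposition, $|M|=3g-3+n+b$ and Step 2 forces $D=g$, so Step 1 gives the same $2g-3+n+b$);
\item no multicurve can cut $S$ into $2g-1+n+b$ pieces.
\end{itemize}

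So the correct dimension is $2g-3+n+b$. For $n=b=0$ this is the value $2g-3$ of Bestvina--Bux--Margalit. For $S_{1,2}$ it gives $1$, which is exactly what the paper derives from this lemma immediately afterwards: $\dim\mathcal{B}_{1,2}=1$, whence $\mathcal{B}_{1,2}$ is a tree. Your planned check ``dimension $2$ for $\mathcal{B}_{1,2}$'' would have exposed the problem. Two disjoint non-isotopic non-separating curves on $S_{1,2}$ already cut it into two pairs of pants with one puncture each, so $N\le 2$.

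\textbf{What the corrected statement still needs: attainment.} You must exhibit an $M\in\mathcal{M}$ all of whose complementary regions are pairs of pants.
\begin{itemize}
\item Start from the closed-surface example of Bestvina--Bux--Margalit if $g\ge 2$, or from a single non-separating curve on $S_{1,1}$ if $g=1$.
\item Add punctures or boundary components one at a time. Place the new one in a region next to a curve $\gamma\in M$, and add a parallel copy $\gamma'$ of $\gamma$, with the same orientation, on the far side of it.
\item This adds exactly one pants region, namely the annulus between $\gamma$ and $\gamma'$ containing the new puncture or boundary component.
\item The curve $\gamma'$ is non-separating and inherits basis cycles from $\gamma$ by substitution, since $[\gamma']=[\gamma]$ modulo the peripheral classes.
\item A short case check, merging the new region into a neighbouring one, shows that no one-sided relation is created.
\end{itemize}
For $S_{1,2}$ this produces two curves with $[\gamma']=[\gamma]\pm[\delta]$ and a cell of dimension $B=1$, consistent with the paper's later use.
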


\begin{figure}[h]
        \centering
        \begin{tikzpicture} 
            \draw (-1.1, -0.2) -- node[below] {$\gamma_1$} cycle ; 
            \draw (1.1, -0.2) -- node[below] {$\gamma_2$} cycle ;
     
            \draw [ultra thick] (2, 1.5) to [ out = 180, in = 90]  (-2, 0)
                                        to [out = -90, in = 180]  (2, -1.5)
                                        to [out = 135,  in = -135]  (2, -0.5)
                                        to [out = 115,  in = -115]   (2, 0.5)
                                        to [out = 135,  in = -135]  (2, 1.5);
            \draw [ultra thick] (2.04, -1.51)  to [out = 45,  in = -45]  (2, -0.5)
                                          to [out = 115,  in = -115]   (2, 0.5)
                                          to [out = 45,  in = -45]  (2, 1.5);
            \draw [very thick] (-0.5, 0)  to [out = 90,  in = 90]   (0.5, 0)
                                          to [out = -90,  in = -90]  (-0.5, 0); 
            \draw [thick, -{stealth}] (-2, 0)  to [out = -30,  in = 180]   (-1.24, -0.22);
            \draw [thick, {stealth}-] (1.19, -0.2)  to [out = 0,  in = -150]   (1.89, 0);
            \draw [thick] (-2, 0)  to [out = -30,  in = -150]   (-0.5, 0);
            \draw [thick] (0.5, 0)  to [out = -30,  in = -150]   (1.89, 0);
    
            \draw [ thick, dashed] (-2, 0)  to [out = 30,  in = 150]   (-0.5, 0);
            \draw [ thick, dashed] (0.5, 0)  to [out = 30,  in = 150]   (1.89, 0);
            
         \end{tikzpicture}

         \caption{A top-dimensional cell in the case of $S_{1}^{2}$.}
     \label{fig_le_o_razmernosti_B_1_2}
\end{figure}

For the proofs and statements in this section, we shall need two functions: $\mathrm{Drain}$ and $\mathrm{Surger}$.
Given two cycles placed in minimal position, the function $\mathrm{Surger}$
acts on their union as shown in Figure \ref{fig_deystvie_Surger}. 
In contrast to Figure \ref{fig_deystvie_psi}, the resolution of the intersection is 
dictated by \textit{respecting the orientations} of the cycles. The construction of 
$\mathrm{Surger}$ in this setting is exactly the same as the case described in \cite[Section 5]{Bestvina}.

\begin{figure}[h]
        \centering
    \begin{tikzpicture} 
       \draw (-2.8, 2) -- node[left] {$k_j$} cycle ;
       \draw (-1.2, 1.5) -- node[right] {$k_i$} cycle ;
       \draw (3.2, 2) -- node[left] {$k_j$} cycle ;
       \draw (4.8, 1.5) -- node[right] {$k_i-k_j$} cycle ;
       \draw (5, -2.2) -- node[left] {$k_j$} cycle ;
       
       \draw [ thick, ->] (0.3, 0) -- (1.5, 0);

       \draw [ thick, ] (-2.8, -2.5) -- (-2.8, 2.5);
       \draw [ thick, ] (-2.65, -2.5) -- (-2.65, 2.5);
       \draw [ thick, ] (-2.5, -2.5) -- (-2.5, 2.5);
       \draw [ thick,-{stealth} ] (-2.35, -2.5) -- (-2.35, 2.5);
       \draw [ thick, ] (-2.2, -2.5) -- (-2.2, 2.5);
       \draw [ thick, ] (-2.05, -2.5) -- (-2.05, 2.5);
       \draw [ thick, ] (-1.9, -2.5) -- (-1.9, 2.5);
     
       \draw [ thick, -{stealth}] (-4.7, 0) -- (0, 0);
       \draw [ thick, ] (-4.7, 0.15) -- (0, 0.15);
       \draw [ thick, ] (-4.7, 0.3) -- (0, 0.3);
       \draw [ thick, ] (-4.7, 0.45) -- (0, 0.45);
       \draw [ thick, ] (-4.7, 0.6) -- (0, 0.6);
       \draw [ thick, ] (-4.7, 0.75) -- (0, 0.75);
       \draw [ thick, ] (-4.7, 0.9) -- (0, 0.9);
       \draw [ thick, ] (-4.7, 1.05) -- (0, 1.05);

       \draw [ thick, ] (-4.7, -0.15) -- (0, -0.15);
       \draw [ thick, ] (-4.7, -0.3) -- (0, -0.3);
       \draw [ thick, ] (-4.7, -0.45) -- (0, -0.45);
       \draw [ thick, ] (-4.7, -0.6) -- (0, -0.6);
       \draw [ thick, ] (-4.7, -0.75) -- (0, -0.75);
       \draw [ thick, ] (-4.7, -0.9) -- (0, -0.9);
       \draw [ thick, ] (-4.7, -1.05) -- (0, -1.05);

       \draw [ thick, ] (1.7, 0.25) --            (3.2, 0.25)-- (4.25, 1.25) -- (4.25, 2.5);
       \draw [ thick, ] (1.7, 0.4) --             (3.2, 0.4)-- (4.1, 1.25) -- (4.1, 2.5);
       \draw [ thick, ] (1.7, 0.55) --            (3.2, 0.55)-- (3.95, 1.25) -- (3.95, 2.5);
       \draw [ thick,-{stealth} ] (1.7, 0.7) --   (3.2, 0.7) -- (3.8, 1.25) -- (3.8, 2.5);
       \draw [ thick, ] (1.7, 0.85) --            (3.2, 0.85)-- (3.65, 1.25) -- (3.65, 2.5);
       \draw [ thick, ] (1.7, 1.0) --             (3.2, 1.0) -- (3.5,  1.25) -- (3.5, 2.5);
       \draw [ thick, ] (1.7, 1.15) --            (3.2, 1.15) -- (3.35, 1.25) -- (3.35, 2.5);

       \draw [ thick, ] (3.35, -2.5) -- (3.35, -1.25) --           (4.4, -0.25) -- (6, -0.25);
       \draw [ thick, ] (3.5, -2.5)  -- (3.5, -1.25)  --           (4.4, -0.4) -- (6, -0.4);
       \draw [ thick, ] (3.65, -2.5) -- (3.65, -1.25) --           (4.4, -0.55) -- (6, -0.55);
       \draw [ thick,-{stealth} ,] (3.8, -2.5) -- (3.8, -1.25) -- (4.4, -0.7) -- (6, -0.7);
       \draw [ thick, ] (3.95, -2.5) -- (3.95, -1.25) --           (4.4, -0.85) -- (6, -0.85);
       \draw [ thick, ] (4.1, -2.5) -- (4.1, -1.25) --            (4.4, -1.0) -- (6, -1.0);
       \draw [ thick, ] (4.25, -2.5) -- (4.25, -1.25) --           (4.4, -1.15) -- (6, -1.15);

       \draw [ thick, ] (1.7, 0) -- (3.2, 0) --     (4.4, 1.05) -- (6, 1.05);
       \draw [ thick, ] (1.7, -0.15) --       (3.2, -0.15) -- (4.4, 0.9) -- (6, 0.9);
       \draw [ thick, ] (1.7, -0.3) --        (3.2, -0.3) --  (4.4, 0.75) -- (6, 0.75);
       \draw [ thick, -{stealth}] (1.7, -0.45) --       (3.2, -0.45) -- (4.4, 0.6) -- (6, 0.6);
       \draw [ thick, ] (1.7, -0.6) --        (3.2, -0.6) --  (4.4, 0.45) -- (6, 0.45);
       \draw [ thick, ] (1.7, -0.75) --       (3.2, -0.75) -- (4.4, 0.3) -- (6, 0.3);
       \draw [ thick, ] (1.7, -0.9) --        (3.2, -0.9 ) -- (4.4, 0.15) -- (6, 0.15);
       \draw [ thick, ] (1.7, -1.05) --       (3.2, -1.05) -- (4.4, 0) -- (6, 0) ;
       
    \end{tikzpicture}
    \caption{The action of $\mathrm{Surger}(\cdot)$.}
 
     \label{fig_deystvie_Surger}
 \end{figure}

The function $\mathrm{Drain}$ is defined analogously to the case described in 
\cite[Section 5]{Bestvina}. 
Let $\gamma = \sum_{i} a_i \gamma_i$ be a cycle. Denote by $\{R_i\}$ the set of oriented 
embedded subsurfaces of $S_{g,n}^{b}$
(the orientation on $R_i$ is the same as that on $S_{g,n}^{b}$)
whose oriented boundaries belong to the support of the cycle $\gamma$. 
Let $\{R_1, \ldots, R_k\}$ be the subset of $\{R_i\}$ consisting of subsurfaces 
whose boundaries give rise to a one-sided relation of the form
\[\partial R_i = [\gamma_{i_{1}}] + \cdots + [\gamma_{i_{p}}] = 0, \]
where the equality is taken in the group $H_1(S_{g,n}^{b}, \mathbb{Z}) / \langle [\delta_1], \ldots, [\delta_{n+b}] \rangle$.
Then, for sufficiently small $\varepsilon$, we may consider the new positive cycle
\[
    \gamma - \varepsilon \left(\partial R_1 + \partial R_2 + \cdots + \partial R_k \right).
\]
We increase $\varepsilon$ until one of the coefficients of the cycle $\gamma$ becomes zero.
We then repeat the process with the new cycle. When no one-sided relations remain, we stop.
The resulting cycle is denoted by $\mathrm{Drain}(\gamma)$.
Since no arbitrary choices were made, the cycle $\mathrm{Drain}(\gamma)$ is uniquely defined.

\begin{theorem} \label{th_o_topologii_kompleksa_ciklov_general}
    The cycle complex $\mathcal{B}_{g,n}^{b}$ is contractible. 
\end{theorem}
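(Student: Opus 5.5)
We will follow the strategy of Bestvina, Bux and Margalit \cite[Section 5]{Bestvina} for closed surfaces, checking at each step that nothing depends on closedness beyond replacing $H_1(S_g,\mathbb{Z})$ with the quotient $H_1(S_{g,n}^{b},\mathbb{Z})/\langle[\delta_1],\ldots,[\delta_{n+b}]\rangle$.

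Since $\mathcal{B}_{g,n}^{b}$ is a regular CW complex with cells $P_M$, by Whitehead's theorem it suffices to show that every compact subcomplex $K\subset\mathcal{B}_{g,n}^{b}$ is contained in a contractible subcomplex, or at least that its inclusion is null-homotopic. Fix a basis cycle $\gamma_0$, given by a single oriented non-separating curve $\gamma_0$ with $[\gamma_0]=x$. Such a curve exists because $x$ is strictly primitive. Next, put $K$ in minimal position with respect to $\gamma_0$ by choosing a hyperbolic metric and taking geodesic representatives, and measure complexity by the total geometric intersection $\max_{\gamma\in K^{(0)}} i(\gamma,\gamma_0)$.

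The core construction is a deformation retraction that decreases this complexity. For $\gamma\in K$ we slide $\gamma$ toward $\gamma_0$ along a path of the form
\[
\gamma\ \mapsto\ \mathrm{Drain}\bigl(\mathrm{Surger}((1-t)\gamma,\ t\gamma_0)\bigr),\qquad t\in[0,1].
\]
Surgery respecting orientations keeps the homology class equal to $(1-t)x+tx=x$ and produces a positive combination of disjoint oriented curves. These curves may be separating or peripheral, or may satisfy one-sided relations. Peripheral components have zero class in the quotient, so they can simply be deleted. Separating components, and collections of components bounding a subsurface, are removed by $\mathrm{Drain}$. This is exactly why the quotient by $\langle[\delta_i]\rangle$ is used in the definition of one-sided relations. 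We will check the following properties, in the order of \cite{Bestvina}:
\begin{enumerate}
    \item $\mathrm{Drain}$ is well defined and continuous on the space of positive cycles representing $x$, and lands in $\mathcal{B}_{g,n}^{b}$.
    \item The composite map is continuous in $(\gamma,t)$ and cellular enough to preserve the cell structure locally. That is, it maps each $P_M$ into a finite union of cells with supports in the surgered multicurve.
    \item At $t=1$ the image is $\gamma_0$.
\end{enumerate}
Together these show that $K$ contracts within $\mathcal{B}_{g,n}^{b}$.

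The main obstacle is the continuity of $\mathrm{Drain}$ at points where the combinatorics of the surgered multicurve change. Near $t=0$ and $t=1$, and wherever coefficients vanish, the family of subsurfaces $R_i$ whose boundaries give one-sided relations jumps. In the bounded/punctured setting there are new kinds of $R_i$, namely subsurfaces whose boundary includes a curve homotopic to some $\delta_j$. First we will try to reduce to the closed case by capping each boundary component and puncture with a disk (or a once-punctured torus, to keep curves essential) and observe that one-sided relations in our quotient correspond exactly to one-sided relations on the capped surface. If this reduction fails, we will instead verify directly, following the proof of Lemma \ref{le_o_razmernosti_B_1_2}, that the draining flow is a piecewise linear, uniquely determined flow. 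Its uniqueness comes from the fact that no choices are made, and from this we will deduce continuity via a compactness argument on the finite-dimensional polytopes $P_M$.
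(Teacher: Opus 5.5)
Your proposal is essentially the paper's argument. Both contract $\mathcal{B}_{g,n}^{b}$ by the Bestvina--Bux--Margalit homotopy $\mathrm{Drain}\bigl(\mathrm{Surger}(tc+(1-t)d)\bigr)$ toward a fixed cycle, and the paper simply asserts that the verification in \cite[Section 5]{Bestvina} goes through verbatim. Your Whitehead/compact-subcomplex/intersection-complexity layer is superfluous: the homotopy is already a global strong deformation retraction onto $\gamma_0$, so the unproved claim that it lowers $i(\cdot,\gamma_0)$ plays no role. Your fallback of gluing a one-holed torus to each puncture and boundary component is a clean way to make ``verbatim'' rigorous, since $\mathcal{B}_{g,n}^{b}(x)$ then sits in the closed-surface complex as a subcomplex preserved by the BBM retraction centred at a cycle in $S$; capping with a disk would not work, because it identifies distinct vertices.
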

\begin{proof}
The proof repeats verbatim the argument given in \cite[Section 5]{Bestvina}.      
The function $H(w, t) = \mathrm{Drain}\bigl(\mathrm{Surger}(tc + (1 - t)d)\bigr)$, constructed analogously, is again a strong deformation retraction of $\mathcal{B}_{g,n}^{b}$ onto the point $c$.
\end{proof}

\begin{lemma} \label{le_neravenstvo_H}
    Let a hyperbolic metric $X$ on the surface $S_{g,n}^{b}$ be fixed. 
    Consider the map
   \[
    H(v, w, t) : \mathcal{B}_{g,n}^{b} \times \mathcal{B}_{g,n}^{b} \times [0,1] \to \mathcal{B}_{g,n}^{b}, 
   \]
   \[
    H(v, w, t) = \mathrm{Drain}(\mathrm{Surger}(tc + (1 - t)d)), 
   \]
   where $c$ and $d$ are $1$-cycles representing the points $v \in \mathcal{B}_{g,n}^{b}$ and
   $w \in \mathcal{B}_{g,n}^{b}$, respectively.
   Then the following inequality holds:
   \[\ell_{\mathrm{Drain}(\mathrm{Surger}(tc + (1 - t)d))} \le t\ell_{c} + (1-t)\ell_{d},\]
   where $\ell_{\sum k_i \gamma_i} = \sum k_i \ell_{\gamma_i}$.  
\end{lemma}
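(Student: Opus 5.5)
The inequality splits into two independent steps:
\[
\ell_{\mathrm{Surger}(e)} \le \ell_{e}, \qquad \ell_{\mathrm{Drain}(f)} \le \ell_{f},
\]
where $e = tc + (1-t)d$ and $f = \mathrm{Surger}(e)$. Length is extended linearly to weighted sums, $\ell_{\sum k_i\gamma_i} = \sum k_i \ell_{\gamma_i}$. By this linearity, $\ell_{e} = t\ell_c + (1-t)\ell_d$ holds as an identity of formal weighted sums, even though $e$ itself need not be a cycle. So it suffices to prove the two monotonicity statements.

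\textbf{Surger.} Realize the supports of $c$ and $d$ by their geodesic representatives. Distinct simple closed geodesics are automatically in minimal position, so the intersection pattern is the one used to define $\mathrm{Surger}$. The weighted union $tc+(1-t)d$ is then a weighted train-track-like object of total weighted length exactly $t\ell_c+(1-t)\ell_d$.

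The surgery at each crossing (Figure \ref{fig_deystvie_Surger}) rearranges strands locally. The result is a weighted collection of closed piecewise-geodesic curves whose total weighted length equals the original. Corners occur only at former crossing points, and the weights are preserved since the weight $k_j$ turns and $k_i-k_j$ continues.

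Each resulting component is freely homotopic to the corresponding component of $\mathrm{Surger}(e)$, which is simple and essential. It is therefore no shorter than the geodesic in its class, since a closed geodesic minimizes length in its free homotopy class. Summing with weights gives the first inequality.

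Parallel components with equal classes merge with their weights added, which preserves $\ell$. Inessential pieces, trivial or peripheral ones, are discarded; as in the construction of \cite[Section 5]{Bestvina}, discarding nonnegative-length terms only decreases $\ell$. So the inequality persists.

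\textbf{Drain.} Each elementary step replaces $\gamma$ by $\gamma - \varepsilon(\partial R_1 + \cdots + \partial R_k)$ with $\varepsilon>0$. Each $\partial R_i$ is a positive sum of components of the support, so
\[
\ell_{\gamma - \varepsilon\sum\partial R_i} = \ell_\gamma - \varepsilon\sum_i \ell_{\partial R_i} \le \ell_\gamma .
\]
Only finitely many steps occur, because each step kills at least one coefficient. Hence $\ell_{\mathrm{Drain}(f)} \le \ell_f$.

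\textbf{Main obstacle.} I expect the delicate point to be the Surger step: checking that the surgered weighted curve has length at most that of the union, in the presence of arbitrary real weights and the inequivalent cases $k_i\ge k_j$ versus $k_i<k_j$. I would argue locally at each crossing that the outgoing weighted strands use exactly the incoming weighted segments, with no added arcs. Then I would invoke length minimization of geodesics in free homotopy classes, which is valid on surfaces with cusps and geodesic boundary.
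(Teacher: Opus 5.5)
Your proposal is correct, and it is essentially the same approach as the paper's: the paper states this lemma with no separate proof, treating it as immediate from the Bestvina--Bux--Margalit construction, and your two-step argument is exactly the reasoning it leaves implicit. First, $\mathrm{Surger}$ keeps the weight of every edge of the geodesic union $c\cup d$, so the surgered weighted closed curves have total length $t\ell_c+(1-t)\ell_d$ before being tightened to geodesics; second, $\mathrm{Drain}$ only subtracts nonnegative multiples of positive-length support curves. The one point to make explicit is that for non-integer weights the surgered object is a weighted train track rather than literal parallel strands, so the length identity should be phrased as: the components of $\mathrm{Surger}(e)$, which is a weighted multicurve by the BBM construction, are carried with exactly those edge weights.
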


We shall be particularly interested in the cycle complex $\mathcal{B}_{1, 2}$. In this case, an oriented multicurve 
can have only one component. Therefore, the vertices of the complex $\mathcal{B}_{1, 2}$ 
are the oriented 
simple closed curves belonging to the class 
\[x \in H_1(S_{g,n}^{b}, \mathbb{Z}) / \langle [\delta_1], \ldots, [\delta_{n+b}] \rangle. \]
From Lemma \ref{le_o_razmernosti_B_1_2} it follows that $\dim \mathcal{B}_{1, 2} = 1$. Therefore, from Theorem 
\ref{th_o_topologii_kompleksa_ciklov_general} we obtain the following.
\begin{corollary} \label{th_o_topologii_kompleksa_ciklov}
    The cycle complex $\mathcal{B}_{1, 2}$ is a tree.
\end{corollary}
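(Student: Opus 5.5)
The corollary follows from two earlier facts: Lemma \ref{le_o_razmernosti_B_1_2} gives $\dim \mathcal{B}_{1,2}=1$, and Theorem \ref{th_o_topologii_kompleksa_ciklov_general} gives contractibility. The plan is to show that $\mathcal{B}_{1,2}$ is a one-dimensional regular CW complex, i.e.\ a graph, and then apply the standard fact that a contractible graph is a tree.

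First I will identify the cells of $\mathcal{B}_{1,2}$. For $S_{1,2}$ we have $2g-2+n+b=2$. The cells are the polytopes $P_M$ with $M\in\mathcal{M}$, and Lemma \ref{le_o_razmernosti_B_1_2} gives $\dim P_M = N(M)-1$, where $N(M)$ is the number of components of $S\setminus M$.

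Next I will bound $N(M)$. On $S_{1,2}$, pairwise disjoint non-separating curves in the classes under consideration have all homology classes congruent to $x$ modulo the puncture classes. A maximal such collection has at most two components, and these two bound a pair of annuli-with-punctures. So $N(M)\le 2$, and every cell is either a $0$-cell or a $1$-cell.

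In detail, the $0$-cells are the basis cycles. Each one is a single oriented curve $\gamma$ with $[\gamma]\equiv x$, and it has coefficient $1$ because $x$ is primitive. The $1$-cells are the segments $P_M$ with $M=\gamma_1\sqcup\gamma_2$, two disjoint non-isotopic such curves that together separate $S$. The endpoints of such a segment are $\gamma_1$ and $\gamma_2$, which are the vertices of $P_M$. Since the decomposition is regular, $\mathcal{B}_{1,2}$ is the geometric realization of the simplicial graph $G$ whose vertices are these curves and whose edges join disjoint pairs.

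Finally, by Theorem \ref{th_o_topologii_kompleksa_ciklov_general}, this graph is contractible. It is therefore connected and, since $\pi_1$ of a graph is free on the edges outside a spanning tree, it has no cycles. Hence it is a tree.

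The only step needing care is the claim that the cell structure is a genuine graph, with no loops or multiple edges. Loops are excluded because the two endpoints of each edge are distinct basis cycles. Multiple edges are excluded because an edge is determined by its support $M$, and $M$ is determined by its two endpoint curves.

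Everything else is immediate from the two cited results. Note that local infiniteness of the graph does not affect the argument: CW complexes of arbitrary cardinality still satisfy the statement that contractible graphs are trees.
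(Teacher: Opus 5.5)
Your proposal is correct and takes the same route as the paper, which simply combines $\dim\mathcal{B}_{1,2}=1$ with the contractibility from Theorem \ref{th_o_topologii_kompleksa_ciklov_general}; your explicit description of the $0$- and $1$-cells, and the check for loops and multiple edges, fill in details the paper leaves implicit. Your direct bound $N(M)\le 2=-\chi(S_{1,2})$ is what actually justifies dimension one: the formula $2g-2+n+b$ as printed in Lemma \ref{le_o_razmernosti_B_1_2} would give $2$, whereas the correct general value is $2g-3+n+b$, which matches $2g-3$ in the closed case.
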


\begin{lemma}\label{le_vypuklost_B}
   Let a hyperbolic metric $X$ on the surface $S_{1, 2}$ 
   with geodesic boundary components be fixed. Consider an arbitrary vertex
   $\gamma_1 \in \mathcal{B}_{1, 2}$. Let $\gamma_0, \gamma_2$ be vertices adjacent 
   to $\gamma_1$. Then 
   \[\ell_{\gamma_1} \le \frac{\ell_{\gamma_0} + \ell_{\gamma_2}}{2}.\]
\end{lemma}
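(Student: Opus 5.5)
Because $\mathcal{B}_{1,2}$ is a tree (Corollary \ref{th_o_topologii_kompleksa_ciklov}) whose vertices are single oriented curves, I reduce the statement to the length inequality of Lemma \ref{le_neravenstvo_H}, applied to the path $\gamma_0 - \gamma_1 - \gamma_2$. The vertices $\gamma_0$ and $\gamma_2$ are both adjacent to $\gamma_1$, so each is disjoint from $\gamma_1$. Since $\gamma_0, \gamma_1, \gamma_2$ all represent the same class $x$ modulo the boundary classes, the edge $[\gamma_0,\gamma_1]$ is the polytope $P_M$ for $M=\gamma_0\cup\gamma_1$. Its points are the cycles $s\gamma_0+(1-s)\gamma_1$, and the same holds for the edge $[\gamma_1,\gamma_2]$. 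The path from $\gamma_0$ to $\gamma_2$ through $\gamma_1$ is the unique geodesic in the tree.

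The key step is to apply Lemma \ref{le_neravenstvo_H} with $c=\gamma_0$, $d=\gamma_2$ and $t=1/2$. Set $\sigma=\mathrm{Drain}(\mathrm{Surger}(\tfrac12\gamma_0+\tfrac12\gamma_2))$; the lemma gives
\[
\ell_{\sigma}\le \tfrac12\ell_{\gamma_0}+\tfrac12\ell_{\gamma_2}.
\]
It then remains to show that $\ell_{\gamma_1}\le \ell_\sigma$. The map $t\mapsto H(\gamma_0,\gamma_2,t)$ is a path in the tree from $\gamma_2$ (at $t=0$) to $\gamma_0$ (at $t=1$). In a tree, every path between two points passes through every point of the unique arc joining them, so for some $t_*$ the cycle $H(\gamma_0,\gamma_2,t_*)$ is exactly the vertex $\gamma_1$, with length $\ell_{\gamma_1}$. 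Lemma \ref{le_neravenstvo_H} at $t_*$ then yields
\[
\ell_{\gamma_1}\le t_*\ell_{\gamma_0}+(1-t_*)\ell_{\gamma_2}.
\]

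This bound alone is not enough unless $t_*=1/2$, and pinning that down is the main obstacle. I would exploit the symmetry in $\gamma_0$ and $\gamma_2$: swapping them gives $H(\gamma_2,\gamma_0,1-t)$, and the surgery of $tc+(1-t)d$ is symmetric under this swap. Hence the path is reversed, and by symmetry it meets $\gamma_1$ at the parameter $1/2$. If that symmetry argument is not clean, the fallback is to compute directly. By Corollary \ref{th_o_topologii_kompleksa_ciklov}, $\gamma_0$ and $\gamma_2$ lie in different components of $\mathcal{B}_{1,2}\setminus\{\gamma_1\}$, so they intersect, while each is disjoint from $\gamma_1$. One then cuts along $\gamma_1$ to get $S_0^4$ with $\gamma_1$ giving two boundary components, in which $\gamma_0$ and $\gamma_2$ are disjoint from the boundary. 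Surgering $\tfrac12\gamma_0+\tfrac12\gamma_2$ respecting orientations yields curves homologous to combinations of $\gamma_1$ and the puncture loops. Draining then removes the one-sided relations, leaving a multiple of $\gamma_1$ with total coefficient $1$, so $\sigma=\gamma_1$.

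Combining the two steps gives $\ell_{\gamma_1}=\ell_\sigma\le(\ell_{\gamma_0}+\ell_{\gamma_2})/2$. The delicate points are verifying that $\mathrm{Drain}$ lands exactly on $\gamma_1$ with coefficient one, and that the length inequality of Lemma \ref{le_neravenstvo_H} survives the drain step. The latter holds because draining subtracts boundaries of subsurfaces with positive coefficients, which only decreases the weighted length.
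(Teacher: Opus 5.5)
There is a genuine gap, and it is exactly where the lemma's content lies: you never prove that $\sigma=\mathrm{Drain}(\mathrm{Surger}(\tfrac12\gamma_0+\tfrac12\gamma_2))$ equals $\gamma_1$.

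Your symmetry argument is vacuous. $H(\gamma_0,\gamma_2,t)$ and $H(\gamma_2,\gamma_0,1-t)$ are computed from the same cycle $t\gamma_0+(1-t)\gamma_2$. So swapping $\gamma_0$ and $\gamma_2$ only says that the path read backwards is the reversed path. It gives no information about the parameter $t_*$ at which $\gamma_1$ is reached.

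Your fallback is only homological. Every multicurve in the surface $\Sigma_{\gamma_1}\cong S_{0,2}^{2}$ obtained by cutting along $\gamma_1$ is homologous to a combination of $\gamma_1$ and puncture loops, so this constrains nothing. $\mathrm{Drain}$ removes only components involved in one-sided relations. Suppose the surgered multicurve had an essential component $c\neq\gamma_1$ of $\Sigma_{\gamma_1}$ separating one copy of $\gamma_1$ plus one puncture from the rest, i.e. another neighbour of $\gamma_1$. Then $c$ would survive, and $\sigma$ would be $c$ or an interior point of the edge $[c,\gamma_1]$, not $\gamma_1$.

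The missing ingredient is an isotopy-level description of the surgery, which the paper's proof supplies.
\begin{itemize}
\item Because $\gamma_2$ separates the planar surface $\Sigma_{\gamma_1}$, the signs of the intersection points of $\gamma_0$ and $\gamma_2$ alternate along $\gamma_0$. Since $\gamma_0$ separates as well, they also alternate along $\gamma_2$.
\item Hence, in the orientation-respecting resolution, each arc of $\gamma_0$ between consecutive crossings is entered and left from the same side of $\gamma_0$. So every surgered component has zero geometric intersection with $\gamma_0$, and likewise with $\gamma_2$.
\item In a four-holed sphere, a curve disjoint from $\gamma_0$ is either peripheral or isotopic to $\gamma_0$. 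The latter is excluded by $i(\gamma_0,\gamma_2)\neq 0$. So every component is a copy of $\pm\gamma_1$, a puncture loop, or trivial.
\item $\mathrm{Drain}$ discards the puncture loops and any cancelling pair $\gamma_1,-\gamma_1$. What remains is $k\gamma_1$, and the homology class $x$ forces $k=1$, i.e. $\sigma=\gamma_1$.
\end{itemize}
Lemma~\ref{le_neravenstvo_H} at $t=\tfrac12$ then finishes the proof, and the tree/path discussion becomes unnecessary.
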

\begin{proof}
    From the definition of $\mathcal{B}_{1, 2}$, it follows that the cycles corresponding to adjacent vertices are 
disjoint; hence, $\gamma_0$ and $\gamma_2$ lie in the subsurface $S_{0,2}^{2} \subset S_{1, 2}$ obtained 
by cutting along $\gamma_1$. Order the intersection points of the geodesics $\gamma_0$ and $\gamma_2$ using
the orientation on $\gamma_0$. Note that the signs of the intersection points alternate. This follows from the fact that the geodesic $\gamma_2$
is a separating curve on the surface $S_{0,2}^{2}$. Consider the intersection of the geodesics $\gamma_0$ and $\gamma_2$,
as shown in Figure \ref{fig_vypuklost}.

\begin{figure}[h]
    \centering
    \begin{tikzpicture} 
       \draw (4, 0) -- node[below] {$\gamma_0$} cycle ; 
       \draw (-3, -2) -- node[left] {$\gamma_2$} cycle ;
       \draw (3.5, 1.7) -- node[right] {$\mathrm{Surger}(\gamma_0+\gamma_2)$} cycle ;

       \draw [very thick, -{stealth}] (-5, 0) -- (5, 0);
       \draw [very thick, -{stealth}] (-3, -3) -- (-3, 3);
       \draw [very thick, -{stealth}] (-1, 3) -- (-1, -3);
       \draw [very thick, -{stealth}] (1, -3) -- (1, 3);
       \draw [very thick, -{stealth}] (3, 3) -- (3, -3);

       \draw [very thick, -{stealth}] (-2.8, -3) to [in = 180 , out = 90] (-2, -0.2);
       \draw [very thick] (-2.1, -0.2) to [out = 0, in = 90 ] (-1.2, -3);

       \draw [very thick, -{stealth}] (1.2, -3) to [in = 180 , out = 90] (2, -0.2);
       \draw [very thick] (1.9, -0.2) to [out = 0, in = 90 ] (2.8, -3);
       
       \draw [very thick, -{stealth}] (-0.8, 3) to [ out = -90, in = 180] (0, 0.2);
       \draw [very thick] (-0.1, 0.2) to [out = 0, in = -90 ] (0.8, 3);

       \draw [very thick, -{stealth}] (-5, 0.2) to [ out = 0, in = -90] (-3.2, 3);

       \draw [very thick, -{stealth}] (3.2, 3) to [ out = -90, in = 180] (5, 0.2);
    \end{tikzpicture}

     \caption{Intersection of the geodesics $\gamma_0$ and $\gamma_2$.}
     \label{fig_vypuklost}
\end{figure} 

Consider the cycle $\omega = \mathrm{Drain}(\mathrm{Surger}(\gamma_0 + \gamma_2))$. From Figure \ref{fig_vypuklost} 
it follows that the support of $\omega$ has zero geometric intersection number with $\gamma_0$.
Moreover, it lies on the surface $S_{0,2}^{2}$ obtained by cutting along $\gamma_1$.
Since $\mathrm{Drain}$ discards
loops around punctures on the surface $S_{1, 2}$, we obtain that the support of the cycle $\omega$ coincides with the oriented 
curve $\gamma_1$.  
Furthermore, $[\omega] = 2x$, where $x \in H_1(S_{1, 2}; \mathbb{Z}) / \langle [\delta_1] \rangle$.
This follows from the definition of $\mathcal{B}_{1, 2}$ and the fact that $\mathrm{Drain}(\mathrm{Surger}(\cdot))$ preserves the homology class of the cycle
in the group $H_1(S_{1,2}, \mathbb{Z}) / \langle [\delta_1], [\delta_{2}] \rangle$.
Hence, $\omega = 2 \gamma_1.$ 
Using Lemma \ref{le_neravenstvo_H}, we obtain the inequality 
$\ell_{\gamma_1} \le \frac{\ell_{\gamma_0} + \ell_{\gamma_2}}{2}$.
\end{proof}

\begin{corollary} \label{le_o_topologii_kompleksa_ciklov}
Let a hyperbolic metric $X$ on the surface $S_{1, 2}$ 
with geodesic boundary components be fixed. Consider a simple path in $\mathcal{B}_{1, 2}$ 
with vertices represented by geodesics $\gamma_1, \ldots, \gamma_n$, 
where $\gamma_1$ is a shortest representative of the homology class 
$x$. Then from the inequality
$1 \le s \le t \le n$ it follows that $\ell_{\gamma_s} \le \ell_{\gamma_t}$.
\end{corollary}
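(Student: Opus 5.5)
The corollary asserts that along a simple path in the tree $\mathcal{B}_{1,2}$ that starts at a shortest representative $\gamma_1$, lengths are non-decreasing. My plan is to combine discrete convexity of the length sequence along the path with the minimality of $\ell_{\gamma_1}$.

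First I set $a_s = \ell_{\gamma_s}$ for $1 \le s \le n$. For every interior index $1 < s < n$, the vertices $\gamma_{s-1}$ and $\gamma_{s+1}$ are both adjacent to $\gamma_s$ in $\mathcal{B}_{1,2}$. Because the path is simple, they are distinct vertices, although Lemma \ref{le_vypuklost_B} does not actually need this. Applying Lemma \ref{le_vypuklost_B} with $\gamma_0 := \gamma_{s-1}$, $\gamma_1 := \gamma_s$ and $\gamma_2 := \gamma_{s+1}$ gives
\[
a_s \le \frac{a_{s-1} + a_{s+1}}{2}.
\]
Equivalently, the increments satisfy $a_{s+1} - a_s \ge a_s - a_{s-1}$. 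So the sequence of differences $d_s = a_{s+1} - a_s$, for $1 \le s \le n-1$, is non-decreasing in $s$.

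Next I use that $\gamma_1$ is a shortest representative of the class $x$. Every vertex of $\mathcal{B}_{1,2}$ is an oriented simple closed curve representing $x$ (modulo the peripheral classes, which is the sense in which the vertices represent $x$). Hence $a_2 \ge a_1$, that is, $d_1 \ge 0$. Since the differences are non-decreasing, $d_s \ge d_1 \ge 0$ for all $s$, so the sequence $(a_s)$ is non-decreasing. Given $s \le t$, telescoping gives $a_t - a_s = \sum_{r=s}^{t-1} d_r \ge 0$, which is exactly $\ell_{\gamma_s} \le \ell_{\gamma_t}$. The cases $n = 1$ and $s = t$ are trivial.

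The only point needing care is that "shortest representative of $x$" is compatible with the vertex set of $\mathcal{B}_{1,2}$. Specifically, $\gamma_2$ must be a competitor in the minimisation that defines $\gamma_1$. I will interpret the hypothesis as minimality among all vertices of $\mathcal{B}_{1,2}$, which is the setting of the introduction, where $\gamma_0$ is the shortest among the curves representing $x + my$. With that reading, the argument is just the elementary fact that a discretely convex sequence whose first increment is non-negative is monotone.
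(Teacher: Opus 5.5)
Your argument is correct and is the same as the paper's: the corollary is stated without proof as an immediate consequence of Lemma~\ref{le_vypuklost_B}, and the proof of Theorem~\ref{th_ocenka_S_1_2} spells out exactly your reasoning, namely convexity at each interior vertex combined with minimality of the starting vertex, propagated inductively along the path. One correction to an aside: Lemma~\ref{le_vypuklost_B} does need $\gamma_0\neq\gamma_2$, so your use of the simplicity of the path to get $\gamma_{s-1}\neq\gamma_{s+1}$ is genuinely required. With $\gamma_0=\gamma_2$ the lemma would give $\ell_{\gamma_1}\le\ell_{\gamma_0}$ for every edge, forcing all vertices of the connected tree to have equal length, and its proof relies on the two neighbours actually intersecting.
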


\subsection{The collar theorem and its consequences}\label{subsec_collar}
Let $\mathbb{H}^{2}$ denote the Lobachevsky plane (the hyperbolic plane).
Let $\alpha, \beta \subset \mathbb{H}^{2}$ be geodesics intersecting at an ideal point $c$.
Denote by $p_{\beta}$ the point of intersection of the geodesic $\beta$ with the boundary at infinity.
Drop a perpendicular $s$ from $p_{\beta}$ to $\alpha$ and denote its foot by $q$.
Then there exists a unique horocycle $h$ with 
center $c$ passing through $q$. 
Let $M$ be the closed region consisting of all points inside the horocycle $h$, 
and let $G$ be the closed region consisting of all points lying between the geodesics $\alpha$ and $\beta$.   
In the upper half-plane model, as in Figure \ref{fig_spike}, $M$ is the half-plane
given by the inequality $\operatorname{Im} z \ge \operatorname{Im} q$, and $G$ is the strip
given by the inequalities $\operatorname{Re} \alpha \le \operatorname{Re} z \le \operatorname{Re} \beta$.
Take two copies of the set $\mathscr{A} = M \cap G$ and identify the geodesic segments in them. 
The resulting cylindrical surface will be denoted by $\mathscr{S}$.
We shall call any region isometric to $\mathscr{S}$ a \textit{spike}, 
and any region isometric to $\mathscr{A}$ a \textit{half-spike}.
The construction is illustrated in Figure \ref{fig_spike}.
\begin{figure}[h]
    \centering
    \begin{tikzpicture}
        \fill (-0.5, 6.5) node {$\mathscr{A} $};
        \fill (-3.7, 6.6) node {$M$};
        \fill (-0.5, 2.7) node {$G$};
        \fill (0.95, 4) node {$s$};
        \fill (2.8, 5.7) node {$h$};
        \fill (-3.36, 5.2) node {$q$};
        \fill (-2.96, 3.2) node {$\alpha$};
        \fill (2.44, 3.5) node {$\beta$};
        \fill (-3.16, 5.39) circle (2pt);
        \fill (2.24, 0) circle (2pt);
        \fill (2.24, -0.3) node {$p_{\beta}$};

        \draw[very thick, dashed] (0: 2.24 cm) arc (0: 90: 5.39 cm);

        \draw[very thick, dotted] (-4, 5.39) --  (3.07, 5.39);
        \draw[very thick] (-4, 0) --  (3.07, 0);

        \draw[very thick] (-3.16, 0) --  (-3.16, 7);
        \draw[very thick] (2.24, 0) --  (2.24, 7);
         
        \draw (-3.16, 5.19) --  (-2.96, 5.19) -- (-2.96, 5.39);
        
    \end{tikzpicture}
    \caption{Construction of a spike.}
    \label{fig_spike}
\end{figure}

\begin{theorem}[\protect{Collar theorem in the noncompact case \cite[Section 4.4]{Bus}}] \label{th_collar}
    Let $S = S_{g, n}$ be a hyperbolic surface. Then 
    \begin{enumerate}
        \item $S$ contains uniquely determined pairwise disjoint \textit{spikes} $\mathscr{S}^{1}, \ldots, \mathscr{S}^{n}$,
              which are neighborhoods of the punctures.
        \item Let $\alpha_{1}, \ldots, \alpha_{3g-3+n}$ be a collection of geodesics on $S$ defining a pants decomposition.
        Then the collars $\mathscr{C}(\alpha_i) = \{p \in S \mid \sinh(\operatorname{dist}(p, \alpha_i)) \sinh (\tfrac{1}{2} \ell_{\alpha_i}) \le 1\}$ 
        around the geodesics $\alpha_{1}, \ldots, \alpha_{3g-3+n}$ are pairwise disjoint and do not intersect the spikes $\mathscr{S}^{1}, \ldots, \mathscr{S}^{n}$.         
    \end{enumerate} 
\end{theorem}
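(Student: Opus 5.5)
The plan is to reduce both statements to elementary trigonometry inside a single pair of pants, following the classical argument in \cite[Ch.~4]{Bus}. Throughout, pairs of pants are allowed to have cusps in place of boundary geodesics.

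\textbf{Step 1: identify a spike with a standard cusp region.} Normalize as in Figure \ref{fig_spike}: $\alpha=\{\operatorname{Re} z=0\}$ and $\beta=\{\operatorname{Re} z=w\}$. The perpendicular from $p_\beta=w$ to $\alpha$ is the Euclidean circle centred at $0$ of radius $w$, so its foot is $q=iw$. The horocyclic side of $\mathscr A$ is the segment $\{\operatorname{Im} z=w,\ 0\le \operatorname{Re} z\le w\}$, whose hyperbolic length is $w/w=1$. Hence $\mathscr S$ is a cusp neighbourhood bounded by a horocycle of length $2$, and its area is $2$. Equivalently, $\mathscr S$ is isometric to $\{\operatorname{Im} z\ge 1\}/\langle z\mapsto z+2\rangle$.

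\textbf{Step 2: embed each spike inside its own pair of pants.} Take a pants decomposition $\alpha_1,\dots,\alpha_{3g-3+n}$. Each puncture lies in a unique pair of pants $Y$. Cut $Y$ along the three common perpendiculars between its boundary components, where a perpendicular ending at a cusp is a geodesic going out the cusp. This splits $Y$ into two congruent right-angled hexagons, possibly degenerate, in which a cusp becomes an ideal vertex. In the degenerate hexagon, the two sides meeting at the ideal vertex $c$ play the roles of $\alpha$ and $\beta$ in Figure \ref{fig_spike}. The half-spike at $c$ is then exactly the region cut off by the horocycle through the foot of the perpendicular from $p_\beta$.

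The task is to show that this half-spike lies inside the hexagon and is disjoint from the other ideal half-spikes and from the half-collars described in Step 3. For this I would use the trirectangle and pentagon formulas with one ideal vertex, which are limits of the formulas $\cosh c=\sinh a\sinh b\cosh\gamma'-\cosh a\cosh b$ as one side length tends to infinity. Gluing the two congruent halves gives an embedded spike in $Y$. Spikes in different pants are disjoint because the pants have disjoint interiors.

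\textbf{Step 3: the collars.} Set $w_i=\operatorname{arcsinh}\bigl(1/\sinh(\tfrac12\ell_{\alpha_i})\bigr)$. In each pair of pants $Y$ adjacent to $\alpha_i$, consider the half-collar $\{p\in Y:\operatorname{dist}(p,\alpha_i)\le w_i\}$. In a right-angled hexagon with alternate sides $a,b,c$ equal to half the boundary lengths, the perpendicular $d$ between the sides $b$ and $c$ satisfies
\[\sinh a\,\sinh d\ge 1\quad\text{and}\quad \sinh d\ \ge\ \ldots,\]
obtained from the pentagon formula after splitting the hexagon along a suitable perpendicular. From this one obtains $w_a+w_b<\operatorname{dist}(\text{side }a,\text{side }b)$. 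This argument is Buser's Lemma 4.1.x.

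One also has to rule out the possibility that a half-collar reaches back to its own boundary geodesic through the opposite hexagon. I would handle this by decomposing each hexagon into pentagons, or into quadrilaterals with one ideal vertex, and checking case by case that the half-collars and half-spikes occupy disjoint sub-polygons. Gluing along the $\alpha_i$ then shows that each $\mathscr C(\alpha_i)$ is an embedded annulus. These annuli are pairwise disjoint and avoid the spikes, which proves part (2).

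\textbf{Uniqueness and the main obstacle.} The spikes are unique because they are defined intrinsically as the horoball quotients bounded by horocycles of length $2$ (Step 1), so they do not depend on the chosen decomposition. I expect the main obstacle to be the disjointness of the spikes from the collars in the degenerate polygons with ideal vertices. There I would pass to the limit in the hexagon identities as one boundary length tends to $0$ and check the sharp inequality; the constant $2$ is exactly the limiting case of $\sinh(\tfrac12\ell)\sinh w=1$ as $\ell\to0$.
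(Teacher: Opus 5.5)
The paper gives no proof of this theorem; it only cites \cite[Section 4.4]{Bus}. Your reduction to right-angled hexagons, degenerate at the cusps, is the standard route. As written, however, the proposal does not establish the inequalities that carry the theorem, and the one you single out as the main obstacle cannot be proved in the form you intend.

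\textbf{Step 3 (the collar inequalities).} The inequality $\sinh a\,\sinh d\ge 1$, with $d$ the common perpendicular of $b$ and $c$, is false. If $a\to 0$ with $b,c$ fixed, $d$ stays bounded, so the product tends to $0$. The second inequality is left as an ellipsis. In any case, a bound of the shape $\sinh(\cdot)\sinh d\ge 1$ only gives $d\ge w$, never $d>w_b+w_c$.

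What works is a direct comparison with \eqref{eq_chb_1}. Since $\cosh w_b=\cosh b/\sinh b$ and $\sinh w_b\sinh w_c=1/(\sinh b\,\sinh c)$,
\[
\cosh d=\frac{\cosh a+\cosh b\cosh c}{\sinh b\,\sinh c}>\frac{1+\cosh b\cosh c}{\sinh b\,\sinh c}=\cosh(w_b+w_c).
\]
Self-overlap of a half-collar is excluded separately. Let $h$ be the perpendicular from side $a$ to the opposite side. In the pentagon it cuts off, $\cosh b=\sinh h\,\sinh a'$ with $a'\subsetneq a$, hence $h>w_a$.

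\textbf{The cusp part.} This is the only content beyond the compact collar theorem, and ``check the sharp inequality'' hides the fact that the inequality is an equality in one case.

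Normalize as in Step 1: $c=\infty$, with the sides at $c$ on $\{\operatorname{Re}z=0\}$ and $\{\operatorname{Re}z=1\}$. The half-spike is then $\{0\le\operatorname{Re}z\le 1,\ \operatorname{Im}z\ge 1\}$. A boundary geodesic $\gamma$ met orthogonally by $\{\operatorname{Re}z=0\}$ lifts to $\{|z|=r\}$. The point of the half-spike closest to $\gamma$ is $q=i$, at distance $\log(1/r)$.

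Now suppose the pair of pants has one geodesic boundary and two cusps. The remaining side is the semicircle from $r^2$ to $1$, and one computes $\sinh(\tfrac12\ell_\gamma)=2r/(1-r^2)$. Hence $\sinh(\operatorname{dist}(q,\gamma))\,\sinh(\tfrac12\ell_\gamma)=1$ exactly, so $q$ lies both on $\partial\mathscr{C}(\gamma)$ and on the horocycle.

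Consequences:
\begin{itemize}
\item The closed collar and the closed spike meet. Examples are $S_{0,4}$, or a curve cutting off both punctures of $S_{1,2}$.
\item In $S_{0,3}$ the three spikes are pairwise tangent.
\item The inequality is strict only when the other side at $c$ ends on a geodesic rather than at a cusp.
\end{itemize}

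Neither your direct check nor the limit $\ell\to 0$ (which only preserves non-strict inclusions) can give disjointness of the closed sets. What is true is that spikes and collars have pairwise disjoint interiors. You should prove that, by carrying out the computation above for pants with one and with two cusps, rather than appealing to a limit. This weaker statement is all that Corollary~\ref{cor_collar} needs.
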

\begin{remark}
    This theorem extends readily to the case of a surface with geodesic boundary components $S_{g, n}^{b}$. 
    To see this, it suffices to take the double.
\end{remark}
\begin{corollary}\label{cor_collar}
    Let $S = S_{g, n}^{b}$ be a hyperbolic surface with geodesic 
    boundary components $\beta_{1}, \ldots, \beta_{b}$. 
    If $\gamma$ is a simple closed geodesic on $S$, then 
    \[
    \gamma \subset  \Omega(S), 
    \] 
\end{corollary}
where $\Omega(S)$ denotes the closure of the set  
\[
   S \setminus \left( \mathscr{S}^{1} \cup \ldots \cup \mathscr{S}^{n} \cup \mathscr{C}(\beta_1)\ldots \cup \mathscr{C}(\beta_{k})\right) .
\]

\begin{construction} \label{con_Psi}
    Let $S_{\infty}$ be a copy of $S \setminus \partial S$. 
    We regard $S_{\infty}$ as a punctured surface and equip it with a
    complete finite-area hyperbolic metric $X_{\infty}$. 
    Next, choose a diffeomorphism
    \[
    \Psi: \Omega(S) \to \Omega(S_{\infty}),
    \] 
    homotopic to the identity map $\Omega(S) \to \Omega(S)$ in the class 
    of maps from $\Omega(S)$ to $S$.
\end{construction}

\begin{remark}
   From Corollary \ref{cor_collar} it follows that the support of any simple closed geodesic 
on the hyperbolic surface $S$ is a subset of $\Omega(S)$. Therefore, for any simple
closed curve, the restriction of $\Psi$ to the support of this curve is a well-defined map.
The analogous statement holds for $\Psi^{-1}$.
\end{remark}
\begin{lemma}\label{le_svoystva_Psi}
    The map $\Psi$ has the following properties:
    \begin{enumerate}
        \item $\Psi$ preserves the homology class of a curve;
        \item $\Psi$ preserves the topological type of a curve.
    \end{enumerate}
\end{lemma}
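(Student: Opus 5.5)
The plan is to realize $\Psi$ as the restriction of a homeomorphism between the two ambient surfaces that is isotopic to the identity. Once that is done, both properties follow because they are invariant under such maps.

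\emph{Step 1: extend $\Psi$.} By Corollary~\ref{cor_collar} and Theorem~\ref{th_collar}, each complementary piece of $\Omega(S)$ in $S$ is one of two kinds:
\begin{itemize}
\item a spike, which is a punctured disc bounded by a horocyclic curve;
\item a half-collar $\mathscr{C}(\beta_i)$, which is an annulus between a boundary geodesic and an equidistant curve.
\end{itemize}
In $S_\infty$ the corresponding pieces are all spikes, i.e. punctured discs. Thus $S\setminus\partial S$ and $S_\infty$ are both obtained from $\Omega(S)\cong\Omega(S_\infty)$ by gluing a once-punctured disc (an open annulus end) to each boundary curve. I would extend $\Psi$ across each boundary circle by coning it radially along these annular ends. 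This gives a homeomorphism
\[
\widetilde\Psi: S\setminus\partial S\to S_\infty = S\setminus\partial S .
\]
Since $\Psi$ is homotopic to the inclusion $\Omega(S)\to S$, and $\Omega(S)$ is a deformation retract of $S\setminus\partial S$, the extension $\widetilde\Psi$ is homotopic to the identity. In particular $\widetilde\Psi$ acts trivially on $\pi_1$ up to conjugation.

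\emph{Step 2: homology.} For a simple closed geodesic $\gamma\subset\Omega(S)$, the curve $\Psi(\gamma)$ is freely homotopic in $S$ to $\gamma$, because $\Psi$ is homotopic to the inclusion as maps into $S$. Hence $[\Psi(\gamma)]=[\gamma]$ in $H_1(S,\mathbb{Z})$. I identify $H_1(S_\infty)$ with $H_1(S)$ via the inclusion of the common interior, which is an isomorphism. This gives property (1).

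\emph{Step 3: topological type.} The map $\Psi$ is a homeomorphism on $\Omega(S)$, so $\Psi(\gamma)$ is simple. Its complement in $S_\infty$ is $\widetilde\Psi(S\setminus\gamma)$, which is homeomorphic to $S\setminus\gamma$ with the boundary components replaced by punctures. So separating versus non-separating is preserved, as is the genus and the puncture/boundary distribution of each complementary piece. The topological type is the $\Mod$-orbit, which is determined by exactly these data (the classification of surfaces / change of coordinates principle), so property (2) follows. One more point is needed: $\Psi(\gamma)$ must be essential, i.e. not peripheral or trivial. This holds because $\gamma$ is essential, $\widetilde\Psi$ is a homeomorphism, and a loop around a boundary component of $S$ corresponds to a loop around a puncture of $S_\infty$.

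\emph{Main obstacle.} I expect the main difficulty to be justifying the extension in Step 1 carefully. The boundary curves of $\Omega(S)$ and of $\Omega(S_\infty)$ must correspond, and $\Psi$ must respect this matching. This correspondence is forced by the homotopy-to-identity condition, since boundary curves represent peripheral classes. Once the extension is in hand, the rest is formal.
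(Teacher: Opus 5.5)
Your argument is correct, and it follows the only route the paper has in mind. The paper states this lemma without proof, treating it as immediate from Construction~\ref{con_Psi} ($\Psi$ is a diffeomorphism of compact cores homotopic to the inclusion into $S$), and your Steps~1--3 are the natural way to make that explicit. Two small remarks: the extension in Step~1 works for \emph{any} homeomorphism $\Omega(S)\to\Omega(S_\infty)$, since each boundary circle bounds its own product end $S^1\times(0,1]$, so the matching of ends matters only for the homotopy-to-identity claim and is not an obstacle; and Step~3 shows that $\Psi$ maps each $\Mod(S)$-orbit \emph{into} a single $\Mod(S_\infty)$-orbit, which is all the lemma asserts, but not onto in general, because former boundary components become punctures that $\Mod(S_\infty)$ may permute with the original ones.
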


\begin{lemma} \label{le_ocenka_general_Mirz}
    For any simple closed geodesic $\gamma$ on the surface $S_{g, n}^{b}$, 
    the following inequalities hold:
    \begin{align}
        \frac{1}{C} \ell_{\gamma}(X) \le \ell_{\Psi(\gamma)}(X_{\infty}) \le C \ell_{\gamma}(X),
    \end{align}
    where $C = C(X) > 0$ is a positive constant depending only on the metric $X$.
\end{lemma}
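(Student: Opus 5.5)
The plan is to compare lengths via a quasi-isometry argument on the compact region where all simple closed geodesics live. By Corollary \ref{cor_collar}, every simple closed geodesic $\gamma$ on $S=S_{g,n}^{b}$ lies in $\Omega(S)$, and likewise every simple closed geodesic of $X_\infty$ lies in $\Omega(S_\infty)$. Both $\Omega(S)$ and $\Omega(S_\infty)$ are compact: we have removed the spikes (cusp neighbourhoods) and the collars around the boundary geodesics. The diffeomorphism $\Psi:\Omega(S)\to\Omega(S_\infty)$ of Construction \ref{con_Psi} is therefore bi-Lipschitz with respect to the Riemannian metrics restricted to these sets. Let $K=K(X)\ge 1$ be a constant such that $\|d\Psi\|\le K$ and $\|d\Psi^{-1}\|\le K$ pointwise; $K$ is finite by compactness and smoothness.

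Upper bound. Take $\gamma$ a simple closed geodesic of $X$. Then $\Psi(\gamma)$ is a closed curve in $\Omega(S_\infty)$ of $X_\infty$-length at most $K\ell_\gamma(X)$. By Lemma \ref{le_svoystva_Psi}, $\Psi(\gamma)$ is a simple closed curve in the corresponding free homotopy class on $S_\infty$, and it is essential. Its geodesic representative minimizes length in this class, so
\[\ell_{\Psi(\gamma)}(X_\infty)\le K\ell_\gamma(X).\]

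Lower bound. Apply the same argument to $\Psi^{-1}$. Let $\gamma'$ be the $X_\infty$-geodesic representative of $\Psi(\gamma)$; it is simple, so it lies in $\Omega(S_\infty)$ by Corollary \ref{cor_collar} applied to $S_\infty$. Then $\Psi^{-1}(\gamma')$ is defined and lies in the free homotopy class of $\gamma$. This holds because $\Psi$ is homotopic to the inclusion and $\Psi^{-1}(\gamma')$ is homotopic, within $S$, to $\Psi^{-1}$ of the curve $\Psi(\gamma)$. Here one must check that the homotopy between $\Psi(\gamma)$ and $\gamma'$ can be pulled back. I would justify this by the identification of $S_\infty$ with the interior of $S$: free homotopy classes of curves in $\Omega(S_\infty)\simeq S_\infty$ correspond to those in $S$. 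Since $\gamma$ is the length minimizer in its class,
\[\ell_\gamma(X)\le K\ell_{\gamma'}(X_\infty)=K\ell_{\Psi(\gamma)}(X_\infty).\]
Taking $C=K$ gives both inequalities.

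The main obstacle is this homotopy bookkeeping in the lower bound. One must make precise the sense in which $\Psi(\gamma)$ and its geodesic $\gamma'$, both in $\Omega(S_\infty)$, correspond under $\Psi^{-1}$ to curves homotopic in $S$. The cleanest route is to note that $\Omega(S)\hookrightarrow S$ and $\Omega(S_\infty)\hookrightarrow S_\infty$ are homotopy equivalences, since each is the complement of collar and cusp annuli. Then $\Psi$ induces a bijection on free homotopy classes of closed curves that is compatible with the identification of $S$ and $S_\infty$ as topological surfaces. With that in hand, the argument above is symmetric. The only remaining point is that the Lipschitz constant of a diffeomorphism between compact manifolds with boundary is finite, which is standard.
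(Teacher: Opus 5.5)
Your proposal is correct and is essentially the paper's argument: compactness of $\Omega(S)$ bounds $\|d\Psi\|$ and $\|d\Psi^{-1}\|$, so lengths of curves in $\Omega$ are distorted by at most a factor $C$. You also make explicit two points the paper leaves implicit: geodesic representatives minimize length, which you apply to $\gamma$ for the upper bound and to $\Psi^{-1}$ of the $X_\infty$-geodesic for the lower bound; and the inclusion $\Omega(S_\infty)\hookrightarrow S_\infty$ is a homotopy equivalence, which correctly justifies pulling the homotopy back.
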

\begin{proof} 
   By the compactness of $\Omega(S)$, the norms $\|d\Psi\|$ and $\|d\Psi^{-1}\|$ 
are bounded above by some positive constant $C$. Hence, under these maps, 
the length of any simple closed curve is distorted by a factor of at most $C$.
\end{proof}

\section{General results}
\label{RefProject_General_case}
\subsection{The growth rate of lengths of simple closed geodesics for surfaces with boundary}
We formulate and prove a corollary of Mirzakhani's results \cite{Mirz} for the case of surfaces with boundary.
Let $S_{g, n}^{b}$ denote a surface of genus $g$ with $n$ punctures and $b$ geodesic boundary components.
Consider a complete finite-area hyperbolic metric $X$ on $S_{g, n}^{b}$ such that
the boundary components of $S_{g, n}^{b}$ are geodesic.

Consider the following direct generalisation of Mirzakhani's counting function to the case of surfaces with boundary:
\[s_{g, n}^{b}(L, \gamma) = \# \bigl\{\alpha \in \mathrm{Mod}_{g, n}^{b} \cdot \gamma \mid \ell_{\alpha}(X) \le L\bigr\}.\]
With this notation, the following theorem holds.

\begin{theorem} \label{th_general_Mirz}
   For any hyperbolic surface $S_{g, n}^{b}$ with metric $X$ and any geodesic $\gamma$
   not homotopic to a boundary component, there exists a constant $c = c(X) > 0$
   such that for all sufficiently large $L$ the following inequality holds:
   \[\frac{1}{c} L^{6g-6+2(n+b)} \le s_{g, n}^{b}(L, \gamma) \le c L^{6g-6+2(n+b)}. \]
\end{theorem}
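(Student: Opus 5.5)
The plan is to reduce to Mirzakhani's Theorem \ref{th_Mirz} on the punctured surface $S_\infty$, using the comparison map $\Psi$ of Construction \ref{con_Psi}. Let $S=S_{g,n}^{b}$ with metric $X$, and let $S_\infty\cong S_{g,n+b}$ carry the complete finite-area metric $X_\infty$.

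First I compare the mapping class group orbits. Every $\alpha\in\Mod_{g,n}^{b}\cdot\gamma$ is a simple closed geodesic, so by Corollary \ref{cor_collar} it lies in $\Omega(S)$ and $\Psi(\alpha)$ is defined. By Lemma \ref{le_svoystva_Psi}, $\Psi(\alpha)$ has the same topological type as $\alpha$, hence as $\gamma$. Two curves of the same topological type on $S_{g,n+b}$ lie in one $\Mod_{g,n+b}$-orbit, so $\Psi(\alpha)\in\Mod_{g,n+b}\cdot\Psi(\gamma)$. The same reasoning applied to $\Psi^{-1}$ gives the reverse inclusion.

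Next I check that $\alpha\mapsto\Psi(\alpha)$, taken up to isotopy in $S_\infty$, is a bijection between the two orbits. Since $\gamma$ is not boundary-parallel, $\Psi(\gamma)$ is essential and not peripheral in $S_\infty$. The map is injective because isotopy classes of essential simple closed curves in $S$ and in $S\setminus\partial S$ coincide. It is surjective because every geodesic in the target orbit lies in $\Omega(S_\infty)$ and has a preimage under $\Psi^{-1}$.

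Then I apply Lemma \ref{le_ocenka_general_Mirz}, which gives $\ell_{\Psi(\alpha)}(X_\infty)\le C\ell_\alpha(X)$ and $\ell_\alpha(X)\le C\ell_{\Psi(\alpha)}(X_\infty)$. Together with the bijection these yield
\[
s_{g,n+b}(L/C,\Psi(\gamma))\le s_{g,n}^{b}(L,\gamma)\le s_{g,n+b}(CL,\Psi(\gamma)).
\]
Theorem \ref{th_Mirz} gives $s_{g,n+b}(L,\Psi(\gamma))\sim n_{\Psi(\gamma)}(X_\infty)L^{6g-6+2(n+b)}$ with positive constant. Hence for large $L$ both sides lie between $\tfrac12 n C^{\mp d}L^{d}$ and $2nC^{\pm d}L^{d}$, where $d=6g-6+2(n+b)$, and this gives the constant $c$.

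The main obstacle is the orbit-correspondence step. The bijection on isotopy classes is the part to justify carefully, in particular the fact that $\Psi$, being homotopic to the inclusion, induces the natural identification of curves. The concern is whether Mod of the bordered surface, which fixes the boundary pointwise, and Mod of the punctured surface have the same orbits on unoriented isotopy classes of curves. I expect this holds because both orbits are characterized by topological type of the complement, with boundary components and punctures labelled. If the labelling conventions differ, the count changes only by a bounded factor: it is a finite union of orbits, or a finite-index relationship, which does not affect the two-sided bound.
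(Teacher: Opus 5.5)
Your proposal is correct and is essentially the paper's proof. You transport the orbit to $S_\infty\cong S_{g,n+b}$ by $\Psi$ using Lemmas \ref{le_svoystva_Psi} and \ref{le_ocenka_general_Mirz}, sandwich $s_{g,n}^{b}(L,\gamma)$ between the orbit counts at $L/C$ and $CL$, and apply Theorem \ref{th_Mirz}; the paper does this in two lines and never discusses the orbit correspondence you worry about. That labelling issue is indeed harmless, but ``finite union of orbits'' by itself only gives the upper bound: for the lower bound, use that your orbit contains the orbit of the normal, finite-index pure mapping class group, whose finitely many translates by fixed mapping classes cover the full orbit and change lengths only by bounded factors, as in Proposition \ref{th_nezavisimost_ot_metriki}.
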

\begin{proof}
    From Lemmas \ref{le_ocenka_general_Mirz} and \ref{le_svoystva_Psi} we obtain the inequalities
    \[s_{g, n+b}\left(\frac{L}{C}, \gamma\right) \le s_{g, n}^{b}(L, \gamma) \le s_{g, n+b}(C L, \gamma).\]
    To complete the proof, it remains to apply Theorem \ref{th_Mirz}:
    \[s_{g, n+b}(L, \gamma) \sim c(S_{g, n+b}) L^{6g-6+2(n+b)}.\]
\end{proof} 
\begin{remark}
    The result stated in Theorem \ref{th_general_Mirz} is weaker than Mirzakhani's original result, 
    but it includes the case of surfaces with boundary components. We are not aware of any work in which 
    an extension of Mirzakhani's theorem with a precise asymptotic is proved for hyperbolic 
    surfaces with boundary.
\end{remark}

\subsection{Proof of Theorem \ref{th_nezavisimost_description}}
In this section, we will prove Theorem \ref{th_nezavisimost_description}. It will immediately follow 
from Propositions \ref{th_nezavisimost_ot_metriki}, \ref{th_ne_zavisimosti_assimptotik_ot_klassa_gomologiy}, 
and \ref{th_nezavisimost_ot_b}.  
Let $x \in H_{1}\left(S_{g, n}^{b}, \mathbb{Z}\right)$ be a nonzero strictly primitive homological class.
We repeat the definition of the $h$ function given in the introduction,
\[
	h_{g,n}^{b}\left(L, x, X\right) = \# \left\{ \alpha \in x \mid  \ell_{\alpha}(X) \le L\right\}, 
\] 
where the symbol $\alpha \in x$ means that the curve $\alpha$ is an oriented simple
closed geodesic belonging to the class $x$.

\begin{proposition} \label{th_nezavisimost_ot_metriki}
   For any two hyperbolic metrics $X_1, X_2$ on the surface $S_{g, n}^{b}$ and any
   primitive homology class $x \in H_{1}(S_{g, n}^{b}, \mathbb{Z})$,
   there exists a constant $c = c(X_1, X_2) > 0$
   such that for all sufficiently large $L$ the following inequality holds:
   \[ h_{g,n}^{b}\left(\frac{L}{c}, x, X_1\right) \le h_{g,n}^{b}(L, x, X_2) \le h_{g,n}^{b}(c L, x, X_1). \]
\end{proposition}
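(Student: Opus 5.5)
The plan is a bi-Lipschitz comparison of lengths, as in Lemma \ref{le_ocenka_general_Mirz}. The aim is a constant $c=c(X_1,X_2)\ge 1$ such that
\[
\frac{1}{c}\,\ell_\gamma(X_1)\le \ell_\gamma(X_2)\le c\,\ell_\gamma(X_1)
\]
for every simple closed curve $\gamma$ on $S_{g,n}^{b}$. Given this, a geodesic representing $x$ with $X_2$-length at most $L$ has $X_1$-length at most $cL$, and conversely. Since the homology class of a free homotopy class does not depend on the metric, the sets $\{\alpha\in x\}$ of isotopy classes are the same for both metrics. This gives both inequalities of Proposition \ref{th_nezavisimost_ot_metriki} at once, and in fact for all $L$.

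To build the comparison map, note that by Corollary \ref{cor_collar} every simple closed geodesic for $X_i$ lies in the compact subsurface $\Omega_{X_i}(S)$. This is the complement of the spikes and of the boundary collars; for a geodesic boundary component the "collar" is the half-collar inside $S$. Both $\Omega_{X_1}(S)$ and $\Omega_{X_2}(S)$ are compact surfaces homeomorphic to $S$ with all ends replaced by boundary circles. I choose a diffeomorphism $\Phi:\Omega_{X_1}(S)\to\Omega_{X_2}(S)$ homotopic, as a map into $S$, to the inclusion. This is the same construction as Construction \ref{con_Psi}, with $X_2$ in place of $X_\infty$. By compactness, $\|d\Phi\|$ and $\|d\Phi^{-1}\|$ are bounded by some constant $c$.

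The two length inequalities then follow as in Lemma \ref{le_ocenka_general_Mirz}. Take the $X_1$-geodesic representative of $\gamma$, which lies in $\Omega_{X_1}(S)$. Its image under $\Phi$ is a curve freely homotopic to $\gamma$ with $X_2$-length at most $c\,\ell_\gamma(X_1)$. The $X_2$-geodesic minimizes length in its free homotopy class, so $\ell_\gamma(X_2)\le c\,\ell_\gamma(X_1)$. Applying the same argument with $\Phi^{-1}$ gives the reverse inequality. Because $\Phi$ is homotopic to the inclusion, the image curve is homotopic to $\gamma$, so the homology class and the topological type are preserved, as in Lemma \ref{le_svoystva_Psi}.

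The only step that needs care is the existence of $\Phi$ homotopic to the inclusion. I will argue that $\Omega_{X_i}(S)$ is a deformation retract of $S$: the removed spikes and collars are product regions (half-open annuli) over the boundary circles. Each $\Omega_{X_i}(S)\hookrightarrow S$ is therefore a homotopy equivalence, and both are isotopic to a fixed compact core of $S$. Composing these isotopies gives the required $\Phi$. I expect this step to be routine rather than a real obstacle.
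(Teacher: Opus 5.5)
Your proof is correct. It reaches the key length comparison $\frac1c\,\ell_\gamma(X_1)\le \ell_\gamma(X_2)\le c\,\ell_\gamma(X_1)$ by a different route from the paper's. The paper simply cites Wolpert's lemma: a $K$-quasiconformal homeomorphism $X_1\to X_2$ in the right homotopy class changes lengths of closed geodesics by a factor of at most $K$. It then does the same counting step you do. You instead prove the comparison directly from Corollary \ref{cor_collar} and compactness, which is the mechanism of Construction \ref{con_Psi} and Lemma \ref{le_ocenka_general_Mirz}.

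The citation is shorter and covers all closed geodesics, simple or not, but it relies on quasiconformal machinery. Your argument is elementary and self-contained within the paper. It does genuinely use simplicity, since a non-simple geodesic can go arbitrarily deep into a cusp, where no compactness bound is available. That costs nothing here, because $h$ counts only simple geodesics.

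Two small remarks. First, the boundary geodesics $\beta_j$ are simple closed geodesics that do not lie in $\Omega_{X_i}(S)$. Your claim ``every simple closed geodesic lies in $\Omega_{X_i}(S)$'' therefore needs this exception. These finitely many curves can be absorbed by enlarging $c$, and they never represent a nonzero strictly primitive class anyway.

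Second, the isotopy step you flagged is unnecessary. $X_1$ and $X_2$ are smooth metrics on the same surface, so the identity map restricted to the compact set $\Omega_{X_1}(S)$ already has bounded differential from $X_1$ to $X_2$. It also trivially preserves free homotopy classes, and the same holds symmetrically on $\Omega_{X_2}(S)$. The paper needs a diffeomorphism $\Psi$ only because $S_\infty$ is a different surface.
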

\begin{proof}
	Denote by $\ell_{\gamma}(X_i)$ the length of the geodesic $\gamma$ in the metric $X_i$.
    From Wolpert's lemma (see \cite{Farb_Margalit}) we have the inequality
	\[\frac{1}{c} \, \ell_{\gamma}(X_1) \le \ell_{\gamma}(X_2) \le c \, \ell_{\gamma}(X_1),
    \quad \text{where } c = c(X_1, X_2) > 0.\]
	The required estimates follow immediately.
\end{proof}

\begin{proposition}\label{th_ne_zavisimosti_assimptotik_ot_klassa_gomologiy}
    Consider the surface $S_{g, n}^{b}$ with a fixed hyperbolic metric $X$.
    Let $x_1, x_2 \in H_1(S_{g,n}^{b}, \mathbb{Z})$ be two strictly primitive homology classes. 
	Then there exist positive constants $C$ and $L_0$ such that for all $L \ge L_0$ the following inequalities hold:
    \[
        h_{g, n}^{b}\left(\frac{1}{C} L, x_1, X\right) \le h_{g, n}^{b}(L, x_2, X) \le h_{g, n}^{b}(C L, x_1, X).   
    \]
\end{proposition}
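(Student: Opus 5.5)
The plan is to move between the two homology classes with a mapping class, and then use Wolpert's lemma exactly as in Proposition \ref{th_nezavisimost_ot_metriki}. Everything rests on one claim: there is a mapping class $\varphi \in \Mod(S_{g,n}^{b})$ with $\varphi_*(x_1) = x_2$ in $H_1(S_{g,n}^{b},\mathbb{Z})$. Once we have it, $\varphi$ sends oriented simple closed geodesics in the class $x_1$ bijectively onto oriented simple closed geodesics in the class $x_2$, after replacing each image curve by its geodesic representative. So
\[
h_{g,n}^{b}(L, x_2, X) = \#\{\alpha \in x_1 \mid \ell_{\varphi(\alpha)}(X) \le L\} = h_{g,n}^{b}(L, x_1, \varphi^{*}X),
\]
where $\varphi^{*}X$ is the pulled-back hyperbolic metric. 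Its boundary components are still geodesic, because $\varphi$ fixes the boundary. Proposition \ref{th_nezavisimost_ot_metriki}, i.e.\ Wolpert's lemma, applied to the two metrics $X$ and $\varphi^{*}X$ gives a constant $C = C(X,\varphi)$ with
\[
h(L/C, x_1, X) \le h(L, x_1, \varphi^{*}X) \le h(CL, x_1, X),
\]
which is the required statement.

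To construct $\varphi$, first choose representatives. Since $x_i$ is strictly primitive, it is represented by an oriented non-separating simple closed curve $\alpha_i$; this is the remark from the introduction. Any two non-separating simple closed curves are related by a homeomorphism, by the classification of surfaces. The catch is that such a homeomorphism gives $\varphi_*[\alpha_1] = [\alpha_2]$ only up to the boundary and puncture classes, because $H_1$ of a non-closed surface contains the classes $[\delta_j]$, which capping off kills. So I would carry out the construction in three steps.

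\textbf{Step 1.} Take $\psi$ with $\psi(\alpha_1) = \alpha_2$ as oriented curves. Then $\psi_*(x_1) - x_2$ lies in the span of the $[\delta_j]$, which is a free abelian group of rank $n+b-1$ with the single relation $\sum_j [\delta_j] = 0$ for suitable orientations.

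\textbf{Step 2.} Correct this discrepancy with mapping classes that fix $\alpha_2$ up to isotopy but shift its homology class by $\pm[\delta_j]$. The natural candidate is the point-push, or boundary-push, of the $j$-th puncture or boundary component across $\alpha_2$. Equivalently, take a product $T_{\beta}T_{\beta'}^{-1}$ of twists along a bounding pair $\beta,\beta'$ that cobound a subsurface containing only $\delta_j$ and that each meet $\alpha_2$ once. This sends $\alpha_2$ to a curve whose class is $[\alpha_2] \pm [\delta_j]$. The condition $g \ge 1$ together with $\alpha_2$ being non-separating guarantees that such a bounding pair exists.

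\textbf{Step 3.} Compose these corrections to obtain $\varphi$ with $\varphi_*(x_1) = x_2$.

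Step 2 is the main obstacle: we need to check that these bounding-pair maps generate every shift by $\sum m_j[\delta_j]$. I would first verify the effect of a single such map on $[\alpha_2]$ using the formula $T_\beta(\gamma) = \gamma + \hat i(\gamma,\beta)\beta$ at the level of homology. This gives $[\alpha_2] \mapsto [\alpha_2] \pm ([\beta]-[\beta'])$, and $[\beta]-[\beta'] = \pm[\delta_j]$. The corrections for different $j$ compose additively on the class of the moved curve, since each subsequent curve is again non-separating in the class $x_2 + (\text{boundary terms})$ and we can choose the next bounding pair relative to it. Iterating $|m_j|$ times for each $j$ then finishes the argument, and the constants are uniform because $\varphi$ is a single fixed mapping class.
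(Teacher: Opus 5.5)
Your proposal is correct and is the paper's argument: take a mapping class $f$ with $f_*(x_1)=x_2$, observe $h_{g,n}^{b}(L,x_2,X)=h_{g,n}^{b}(L,x_1,f^{*}X)$, and apply Proposition~\ref{th_nezavisimost_ot_metriki}; the paper simply asserts that such an $f$ exists. One correction: the ``catch'' you describe does not arise, because if $\alpha_i$ represents $x_i$ exactly and $\psi(\alpha_1)=\alpha_2$ as oriented curves, then $\psi_*(x_1)=x_2$ already holds, so Steps~2--3 are not a correction to $\psi$ but a proof of the representability fact you cite from the introduction (needed only if $\alpha_i$ is chosen to represent $x_i$ modulo the $[\delta_j]$).
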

\begin{proof}
    Consider a mapping class $f \in \mathrm{Mod}_{g, n}^{b}$ such that $f(x_1) = x_2$.
    Then $h_{g, n}^{b}(L, x_1, f^{*}X) = h_{g, n}^{b}(L, x_2, X)$.
    To complete the proof, it remains to apply Proposition \ref{th_nezavisimost_ot_metriki}.
\end{proof}

\begin{proposition} \label{th_nezavisimost_ot_b}
    For any two hyperbolic metrics $X_1, X_2$ on the surfaces $S_{g, n}^{b}$ and $S_{g, n+b}$, respectively, and any
    pair of strictly primitive nonzero homology classes $x_1 \in H_{1}(S_{g, n}^{b}, \mathbb{Z})$ and $x_2 \in H_{1}(S_{g, n+b}, \mathbb{Z})$,
    there exists a constant $c > 0$
    such that for all sufficiently large $L$ the following inequality holds:
    \[ h_{g,n+b}\left(\frac{L}{c}, x_2, X_2\right) \le h_{g,n}^{b}(L, x_1, X_1) \le h_{g,n+b}(c L, x_2, X_2). \]
\end{proposition}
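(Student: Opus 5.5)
The comparison map will be the diffeomorphism $\Psi:\Omega(S)\to\Omega(S_\infty)$ of Construction \ref{con_Psi}, with $S=S_{g,n}^{b}$ carrying $X_1$ and $S_\infty\cong S_{g,n+b}$ carrying the complete finite-area metric $X_\infty$. By Corollary \ref{cor_collar}, every simple closed geodesic $\gamma$ on $S$ lies in $\Omega(S)$. So $\Psi(\gamma)$ is a simple closed curve on $S_\infty$, and I take its geodesic representative. The symmetric statement holds for $\Psi^{-1}$, since simple closed geodesics of $S_\infty$ also lie in $\Omega(S_\infty)$. The resulting map on isotopy classes of simple closed curves is a bijection, because $\Psi$ is homotopic to the inclusion.

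First I check that homology is respected. By Lemma \ref{le_svoystva_Psi}, $\Psi$ preserves homology classes, where $H_1(S_{g,n}^b)$ and $H_1(S_{g,n+b})$ are identified through the inclusion $S\setminus\partial S\hookrightarrow S$; this inclusion is a homotopy equivalence. Hence $\Psi$ gives a bijection between oriented simple closed geodesics in the class $x_1$ on $S$ and those in the class $x_1'$ on $S_\infty$, where $x_1'$ is the image of $x_1$. Strict primitivity is a statement about the capped-off surface, which is the same closed surface in both cases, so $x_1'$ is strictly primitive and nonzero.

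Next I compare lengths. Lemma \ref{le_ocenka_general_Mirz} gives $\frac1C\ell_\gamma(X_1)\le \ell_{\Psi(\gamma)}(X_\infty)\le C\ell_\gamma(X_1)$. Combined with the bijection, this yields
\[
h_{g,n+b}\!\left(\tfrac{L}{C},x_1',X_\infty\right)\le h_{g,n}^b(L,x_1,X_1)\le h_{g,n+b}(CL,x_1',X_\infty).
\]
The lower bound holds because a curve of $X_\infty$-length at most $L/C$ pulls back under $\Psi^{-1}$ to a curve whose geodesic has $X_1$-length at most $L$, and the bijection makes the count injective.

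Finally I move from $(X_\infty,x_1')$ to $(X_2,x_2)$ on $S_{g,n+b}$. Proposition \ref{th_nezavisimost_ot_metriki} changes the metric from $X_\infty$ to $X_2$ at the cost of a multiplicative constant in $L$. Proposition \ref{th_ne_zavisimosti_assimptotik_ot_klassa_gomologiy} changes the class from $x_1'$ to $x_2$, since both are strictly primitive. Composing the three sets of inequalities and taking $c$ to be the product of the constants gives the claim for large $L$.

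The step I expect to be the main obstacle is the bijection, specifically that $\Psi$ sends a simple closed geodesic to an essential curve that is not peripheral and that no curves are gained or lost. Lemma \ref{le_svoystva_Psi} (topological type preserved) should handle this. The only real care needed is that a curve parallel to a boundary component $\beta_i$ corresponds to a puncture loop. Such curves have homology class $[\delta_i]$, and that class is not strictly primitive, so they never contribute to either count.
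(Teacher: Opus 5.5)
Your proof is correct and follows the same route as the paper. You first handle the special case $X_2=(X_1)_\infty$ via $\Psi$ together with Lemmas \ref{le_svoystva_Psi} and \ref{le_ocenka_general_Mirz}, and then reach an arbitrary metric and strictly primitive class through Propositions \ref{th_nezavisimost_ot_metriki} and \ref{th_ne_zavisimosti_assimptotik_ot_klassa_gomologiy}. You simply make explicit what the paper leaves implicit: the bijection on simple closed geodesics, the transported class $x_1'$ and its strict primitivity, and the fact that peripheral curves are harmless.
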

\begin{proof}
    Consider the special case. Let the metric $X_2$ be equal to $(X_1)_{\infty}$,
    where the construction of the metric $(X_1)_{\infty}$ is described in Construction \ref{con_Psi}.
    Then, for the proof it suffices to use Lemmas \ref{le_ocenka_general_Mirz} and \ref{le_svoystva_Psi}.  
        
    The general case reduces to the special one by means of Propositions \ref{th_ne_zavisimosti_assimptotik_ot_klassa_gomologiy} 
    and \ref{th_nezavisimost_ot_metriki}.
\end{proof}

Theorem \ref{th_nezavisimost_description} is a combination of Propositions \ref{th_nezavisimost_ot_metriki}, \ref{th_ne_zavisimosti_assimptotik_ot_klassa_gomologiy}, 
and \ref{th_nezavisimost_ot_b}.  
It follows that the leading order of growth of the function $h_{g,n}^{b}(L, x, X)$ does not depend on the metric $X$, is unchanged when replacing a geodesic boundary component by a puncture, and is unchanged when replacing the homology class $x$ by another strictly primitive class $x_1$. 
Thus, up to the leading order of growth, it suffices to study only the function $h_{g,n}(L, x)$, where $x$ is a nonzero strictly primitive 
homology class. From now on, we shall omit the metric symbol.

\subsection{General lower bound}
In this section we prove Theorem \ref{th_general_description}.

\begin{theorem} \label{th_S_g_n}
    Let a hyperbolic metric $X$ be fixed on the surface $S_{g,n}$ with $g+n \ge 3$ and $g \ge 1$. Then for 
    any strictly primitive homology class $x$ we have
   \[L^{6(g-1) + 2(n-1)} \ll h_{g,n}(L,x) \ll L^{6(g-1) + 2n}.\]
\end{theorem}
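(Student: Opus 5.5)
The upper bound is immediate from Theorem \ref{th_Mirz}. Every oriented simple closed geodesic in a nonzero strictly primitive class $x$ is non-separating, so it lies in the $\Mod_{g,n}$-orbit of a fixed non-separating curve $\gamma_0$. Forgetting orientation is at most two-to-one, hence
\[ h_{g,n}(L,x) \le 2\, s_{g,n}(L,\gamma_0) \ll L^{6(g-1)+2n}. \]
The real work is the lower bound, and I would follow the bounding-pair idea from the introduction.

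\textbf{Step 1 (choosing the cut).} Fix an oriented simple closed geodesic $\alpha$ representing $x$; it exists because $x$ is strictly primitive. Cut $S_{g,n}$ along $\alpha$ to obtain a surface $S' \cong S_{g-1,n}^{2}$ with geodesic boundary components $\alpha^{+},\alpha^{-}$ and the induced hyperbolic metric. The hypothesis $g+n\ge 3$ ensures that $S'$ is not a pair of pants, or at least that it contains an essential non-peripheral curve of the type we need. Choose an oriented simple closed curve $\alpha'$ on $S'$ that separates $S'$ into two pieces, one containing $\alpha^{+}$ and the other containing $\alpha^{-}$, with the $n$ punctures and the $g-1$ handles distributed so that $\alpha'$ is essential and not homotopic to a boundary component. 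For instance, take one piece to be a pair of pants bounded by $\alpha^{+}$, one puncture and $\alpha'$. This uses $n\ge 1$; if $n=0$ then $g\ge 3$, and one can instead put a handle on one side.

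Then $\alpha$ and $\alpha'$ cobound a subsurface of $S_{g,n}$ containing possibly some punctures. Orient $\alpha'$ coherently, so that $[\alpha'] = [\alpha] + \sum_{j\in J} \pm[\delta_j]$ for the punctures $\delta_j$ enclosed. To land exactly on $x$ rather than $x$ modulo puncture classes, I would choose the side configuration so that the enclosed piece contains no punctures, making $\alpha,\alpha'$ a genuine bounding pair. Alternatively, I would rely on the fact that every element of $\Mod(S')$ fixes the boundary and the punctures, so homology classes are preserved consistently. Every curve $f(\alpha')$ with $f\in\Mod_{g-1,n}^{2}$ still separates $\alpha^{+}$ from $\alpha^{-}$ with the same punctures on each side, and therefore represents the same class in $H_1(S_{g,n},\mathbb{Z})$.

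\textbf{Step 2 (counting).} By Theorem \ref{th_general_Mirz} applied to $S'$, we have
\[ s_{g-1,n}^{2}(L,\alpha') \gg L^{6(g-2)+2(n+2)} = L^{6(g-1)+2(n-1)}. \]
Distinct isotopy classes in $S'$ that are not peripheral remain distinct, non-isotopic geodesics in $S_{g,n}$, and their lengths are unchanged since the metric is induced. This gives $h_{g,n}(L,x)\gg L^{6(g-1)+2(n-1)}$. If the bounding pair must avoid enclosing punctures, I would pick $\alpha'$ with all $n$ punctures on the far side. In that case $\alpha'$ might coincide with $\alpha$ up to isotopy only when the near side is an annulus, and that is excluded by putting a handle or puncture there.

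\textbf{Main obstacle.} The delicate point is Step 1: guaranteeing, under exactly $g\ge1$ and $g+n\ge3$, an essential $\alpha'$ in $S'$ that separates the two copies of $\alpha$, is not boundary-parallel, and has $[\alpha']=x$ on the nose. The case $g=1$ is the tight one. There $S'=S_{0,n}^{2}$ with $n\ge2$, and the pieces must be split as $(\alpha^{+}, \text{some punctures}, \alpha')$ and the rest. The enclosed punctures shift the homology class by $\pm[\delta_j]$, so I would handle this by noting that the whole $\Mod(S')$-orbit has the constant class $x+\sum_{j\in J}[\delta_j]$, which is again strictly primitive. Proposition \ref{th_ne_zavisimosti_assimptotik_ot_klassa_gomologiy} then transfers the bound back to $x$.
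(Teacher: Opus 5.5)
Your proposal is correct and follows essentially the paper's argument: cut along a non-separating curve to get $S_{g-1,n}^{2}$, take $\alpha'$ cobounding with one copy of $\alpha$ a pair of pants containing a puncture (or a genus-one piece when $n=0$), and count its orbit via Theorem \ref{th_general_Mirz}. The only cosmetic difference is that the paper chooses $\alpha$ in the class $x-[c]$ so that the orbit lands directly in $x$, whereas you land in a shifted class $x\pm[c]$ and transfer back via Proposition \ref{th_ne_zavisimosti_assimptotik_ot_klassa_gomologiy}.
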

\begin{proof}
The upper bound is an immediate consequence of Mirzakhani's Theorem \ref{th_Mirz}. 
We now turn to the proof of the lower bound. Consider the case $n > 0$. 
Take disjoint simple closed curves 
$\alpha$ and $\alpha'$ such that $[\alpha'] = x$ and $\alpha \cup \alpha'$ cut off a subsurface 
homeomorphic to $S_{0, 1}^{2}$; see Figure \ref{fig_S_g_n}. 

\begin{figure}[h]
    \centering
    \begin{tikzpicture} 

		\draw (12.3,0.38) -- node[right] {$c$} cycle ;
		\filldraw[black] (12, 0) circle (2pt) ; 
		\filldraw[black] (2, 1) circle (2pt) ;
		\filldraw[black] (1.5, 0) circle (2pt) ;
		\filldraw[black] (2, -1) circle (2pt) ;
		
		\draw[ thick,  {stealth}-] (11.5,-0.1) to [out=90,in=180] (12,0.5)
		 to [out=0,in=90] (12.5,0) to [out=-90,in=0] (12,-0.5) to [out=180,in=-90] (11.5,0) ;

		\draw[ thick ] (11.5,0) to [out=90,in=180] (12,0.5)
		 to [out=0,in=90] (12.5,0) to [out=-90,in=0] (12,-0.5) to [out=180,in=-90] (11.5,0) ;

		\begin{scope}[shift={(3,0)}]
			\draw (7.7,-1.4) -- node[above] {$\alpha$} cycle ; 
			\draw (7.7,0.7) -- node[above] {$\alpha'$} cycle ;
			\draw[ thick] (7,2) to [out=-45, in=90] (7.38, 1.035);
			\draw[ thick,  -{stealth}] (7,2) to [out=-45,in = 90] (7.38, 1.035) ;
			\draw[ thick] (7, 0.07) to [out=45,in = -90] (7.38, 1.035) ;
			\draw[dashed] (7,2) to [out=-135,in=135] (7,0.07);

			\draw[thick] (7,-0.2) to [out=-45,in=90] (7.38, -1.035);
			\draw[thick,  -{stealth}] (7,-2) to [out=45,in = -90] (7.38, -1.035) ;
			\draw[thick] (7,-2) to [out=45,in = -90] (7.38, -1.035) ;
			\draw[dashed] (7,-0.2) to [out=-135,in=135] (7,-2);
		\end{scope}
        \draw[ultra thick] (1,0) to [out=90,in=180] (4,2) to [out=0,in=0] (7,2) to
        [out=0,in=90] (13,0) to [out=-90,in=0] (7,-2) to [out=180,in=-90] (1,0); 
        \draw[very thick] (3.5,0) to [out=-45,in=-135] (4.5, 0); 
        \draw[very thick] (3.6,-0.1) to [out=45,in=135] (4.4, -0.1); 
        \draw[very thick] (6.5,0) to [out=-45,in=-135] (7.5, 0); 
        \draw[very thick] (6.6,-0.1) to [out=45,in=135] (7.4, -0.1); 
        \draw[very thick] (9.5,0) to [out=-45,in=-135] (10.5, 0); 
        \draw[very thick] (9.6,-0.1) to [out=45,in=135] (10.4, -0.1); 
         
    \end{tikzpicture}
    \caption{Construction for Theorem \ref{th_S_g_n}.}
    \label{fig_S_g_n}
\end{figure}
Let $\alpha$ and $\alpha'$ be a separating pair, and in the case of a surface 
with punctures, together with a distinguished puncture $c$, they form a subsurface homeomorphic to $S_{0,1}^{2}$.
Note that $[\alpha'] = [\alpha] + [c]$, since these geodesics together separate the surface.
Consider the auxiliary surface $S_{g-1,n}^{2}$, obtained from $S_{g,n}$ by cutting along $\alpha$.
Denote by $X'$ the metric on $S_{g-1,n}^{2}$ induced by the metric $X$ after cutting.
On $S_{g-1,n}^{2}$, the curve $\alpha'$ becomes separating. 
Any curve on $S_{g-1,n}^{2}$ of the same topological type as $\alpha'$ will have homology class $x$.

By Theorem \ref{th_general_Mirz} applied to $S_{g-1,n}^{2}$ with the metric $X'$, we have
\[s_{g-1,n}^{2}(L, \alpha') \asymp L^{6(g-1)+2(n-1)}.\]
Hence, on $S_{g,n}$ there exist at least $c L^{6(g-1)+2(n-1)}$ curves representing 
the class $x$ whose length does not exceed $L$. To complete the proof, it remains 
to use Theorem \ref{th_Den}, with which we will show that after gluing the surface,
the resulting homotopy classes of curves remain distinct. The Dehn--Thurston coordinates,
compatible with the set $\alpha \in \mathcal{P}$, are the same
for curves not intersecting $\alpha$ and lying on $S_{g,n}$ as for curves lying on $S_{g-1,n}^{2}$.
Consequently, we obtain the estimate
\[
    s_{g-1,n}^{2}(L, \alpha') \le h_{g,n}(L,x).
\]
The closed surface case is analogous; one considers a pair $\alpha \cup \alpha'$ cutting off 
a genus-one subsurface.
\end{proof}
    
\section{\texorpdfstring{Case $S_{1, 2}$}{Case S12}} \label{Case S_1_2}
    
    \subsection{Main result} 
    Recall that the function $h_{1, 2}(L, x)$ is the number of simple closed geodesics of length $\le L$ on $S_{1, 2}$ whose homology
    class is equal to $x$. We now state the main result of this section.

    \begin{theorem} \label{th_ocenka_S_1_2}
        Let a hyperbolic metric $X$ be fixed on the surface $S_{1,2}$. 
        Then for any fixed strictly primitive nonzero homology class $x \in H_1(S_{1,2}, \mathbb{Z})$, we have
        \[
            h_{1,2}(L, x) \gg L^{3.011057 \ldots}.
        \]
    \end{theorem}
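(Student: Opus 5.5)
We will organize the curves in the class $x$ via the tree $\mathcal{B}_{1,2}$ (Corollary \ref{th_o_topologii_kompleksa_ciklov}). Let $y$ be the class of a loop around a puncture. Vertices of $\mathcal{B}_{1,2}$ are oriented simple closed geodesics in classes $x+my$, and adjacent vertices are disjoint, so their classes differ by $\pm y$. Fix a root $\gamma_0$, a shortest geodesic in class $x$. Vertices at distance $k$ from $\gamma_0$ represent classes $x+qy$ with $q\equiv k \pmod 2$, so the curves in the class $x$ itself are found among the even layers.

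The basic step goes as follows. Cut $S_{1,2}$ along a vertex $\gamma$ to obtain $S_{0,2}^{2}$. The neighbours of $\gamma$ in $\mathcal{B}_{1,2}$ are the curves in $S_{0,2}^{2}$ separating the two boundary copies of $\gamma$ from the two punctures. In Dehn--Thurston coordinates (Theorem \ref{th_Den}) with respect to one such neighbour $\alpha$, these are the curves $\gamma_{(m,t)}$ with $\gcd(m/2,t)=1$. Lemma \ref{le_ocenka_dliny_v_koordinatah_D_T} bounds their lengths by $\frac m2(\ell_{h_{-1}}+\ell_{h_1})+|t|\ell_\alpha$, and the arcs $h_{\pm1}$ have length $\ll \ell_\gamma+\ell_\alpha+O(1)$. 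So the number of neighbours of $\gamma$ of length $\le L$ is roughly $\gg (L/\ell_\gamma)(L/\ell_\alpha)$ on the appropriate range. Lemma \ref{le_deystvie_T_gamma_0_1} describes how the twists act, which lets us control the coordinates explicitly.

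Using the tree structure and the monotonicity along geodesic paths from $\gamma_0$ (Corollary \ref{le_o_topologii_kompleksa_ciklov}), distinct paths $\gamma_0=\beta_0,\beta_1,\dots,\beta_{2k}$ give distinct endpoints. Only the non-backtracking condition is needed, and the convexity of Lemma \ref{le_vypuklost_B} guarantees it asymptotically. We then count paths with the length recursion $\ell_{\beta_{j+1}}\lesssim a_j\ell_{\beta_j}+b_j\ell_{\beta_{j-1}}$, where $(a_j,b_j)$ are the Dehn--Thurston parameters and are essentially free positive integers subject to a coprimality condition. This has positive density, which costs only a constant per step.

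The count of endpoints of length $\le L$ thus reduces to counting sequences of positive integers $(a_1,b_1,\dots)$ for which the product of the $2\times 2$ matrices $\begin{pmatrix} a_j & b_j\\ 1&0\end{pmatrix}$ applied to $(\ell_{\gamma_0},\ell_{\beta_1})$ stays below $L$. This is the noncommutative analogue of the Dirichlet divisor problem mentioned in the introduction. Layer $2$ already gives $\gg L^3$: the first neighbour step contributes a 2-parameter family with a linear constraint, and the second adds another. Each further pair of layers gains a factor of $\log L$, giving $\gg C_k L^3(\log L)^{k-1}$.

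Getting a power gain $L^{\eta}$, rather than just logarithms, requires summing over all $k$ simultaneously while controlling the constants $C_k$. Concretely, we would set up the generating Dirichlet series $F(s)=\sum (\text{norm of matrix product})^{-s}$. Its abscissa of convergence is determined by the spectral radius, or pressure, of the transfer operator $\sum_{a,b} \|M_{a,b}v\|^{-s}$. A Tauberian or direct lower-bound argument then gives $\#\{\le L\}\gg L^{3+\eta}$, where $3+\eta$ is the critical exponent solving an equation of the form $\sum_{a,b}\lambda(a,b)^{-s}=1$ up to an explicit comparison.

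The main obstacle is this last step. It involves two things: rigorously lower-bounding the critical exponent of this non-commutative counting problem, which I would do by a submultiplicative estimate on a restricted subsemigroup (for example, restricting to $a_j$ large so that the products are nearly diagonal) and then computing the root numerically to get $\eta=0.011057\ldots$; and verifying that the geometric length bounds are uniform enough in $k$ that the constants degrade at most geometrically.
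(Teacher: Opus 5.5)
Your geometric skeleton matches the paper's: the tree $\mathcal{B}_{1,2}$, special Dehn--Thurston coordinates at each vertex with the parent as pants curve, the recursion $\ell_{\mathrm{child}}\le m\,\ell_{\mathrm{current}}+|t|\,\ell_{\mathrm{parent}}+mC$, and matrix products. There are, however, two genuine gaps.

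\textbf{First gap: you never restrict to the class $x$.} An endpoint at distance $2n$ from a root in $x$ lies in \emph{some} class $x+2jy$ with $|j|\le n$. Your reduction counts all of them, so what it bounds from below is $\sum_j h_{1,2}(L,x+2jy)$. The range of $j$ grows with $L$, and the constants in Proposition~\ref{th_ne_zavisimosti_assimptotik_ot_klassa_gomologiy} depend on $j$, so nothing follows for the fixed class. The missing bookkeeping is item~5 of Lemma~\ref{le_specialnye_coordinaty_D_T}:
\begin{itemize}
\item A step with coordinates $(m,t)$ changes the class by $(-1)^{m/2}\sigma[\delta]$. Here $t$ must be odd; even $t$ gives separating curves, which are not vertices.
\item Since the parent is the $(0,1)$-curve in the child's coordinates, $\sigma$ is multiplied by $(-1)^{m/2+1}$ at each step.
\item Hence, with root $\alpha_0$, after $N$ steps the class is $[\alpha_0]-\sigma(\alpha_0)\sum_{k\le N}(-1)^{q_k}[\delta]$, where $q_k=\sum_{j\le k}(m_j/2+1)$.
\item Returning to $[\alpha_0]$ therefore forces $N=2n$ with exactly $n$ of the $q_k$ even.
\end{itemize}
Because $(m_j/2 \bmod 2)_j\mapsto(q_k \bmod 2)_k$ is a bijection, this selects $\binom{2n}{n}$ of the $4^n$ parity patterns, each still of cubic density per pair of steps. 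This is condition~(5) of Theorem~\ref{th_reshenie_combinatornoy_zadachi}, and it costs only a factor $\asymp n^{-1/2}$.

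Relatedly, the root must be a shortest vertex of the whole tree (over all classes $x+m[\delta]$), not just a shortest curve in $x$. What Lemma~\ref{le_vypuklost_B} actually supplies is $\ell_{\mathrm{parent}}\le\ell_{\mathrm{current}}$ at every step, which the sharp length recursion needs. Non-backtracking is a separate, purely combinatorial matter: you exclude $(0,1)$. The paper then passes to arbitrary $x$ via Proposition~\ref{th_ne_zavisimosti_assimptotik_ot_klassa_gomologiy}.

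\textbf{Second gap: the combinatorial core is left open.} The step you call the main obstacle is the heart of the theorem, and the route you sketch would not deliver the stated exponent. The number $0.011057\ldots$ is not a critical exponent but the output of a specific lossy estimate:
\begin{itemize}
\item submultiplicativity over consecutive \emph{pairs}, $\|\prod_{j\le 2n}A_j\|\le\prod_i\|A_{2i-1}A_{2i}\|$, with the exact value $\|A_{2i-1}A_{2i}\|=m_{2i}(2m_{2i-1}+|t_{2i-1}|+1)+|t_{2i}|$;
\item the bound $\#\{\lambda_k\le L\}\ge r(M)L^3$ with $r(M)\to 1/(96\pi^2)$ (Lemma~\ref{le_assimptotika_mu_i});
\item the divisor-type volume with cutoff $k_i\ge s=M^3r(M)$, and the weight $\binom{2n}{n}$;
\item Stirling at $n\approx\alpha\log L/\log(s/c)$ (Lemma~\ref{le_o_poiske_max_urovnya}), optimised at $M=e^{11.2039}$, $\alpha=0.330679$.
\end{itemize}
Pairing is what preserves cubic growth. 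That growth comes from $|t_{2i}|$ ranging up to order $m_{2i}m_{2i-1}$, a coupling between consecutive steps. If \emph{nearly diagonal} means keeping $|t_j|$ comparable to $m_j$, you are left with quadratic growth per step and lose the exponent $3$ altogether.

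An abscissa-of-convergence argument by itself only controls $\limsup_L \log N(L)/\log L$. You would also need a Tauberian or supermultiplicativity argument valid for all large $L$, and then a proof that your root is at least $3.011057$.

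Finally, the gain depends sensitively on, roughly proportionally to, the cubic density constant. Geometric input with unspecified constants, such as $\ell_{h_{\pm1}}\ll\ell_\gamma+\ell_\alpha+O(1)$, is therefore too weak. You need Lemma~\ref{th_conbinatorial_lenght_S_0_4}: $\ell_{\gamma_{(m,t)}}\le m\ell_{\gamma_1}+|t|\ell_{\gamma_0}+mC$, with $\gamma_0$ the parent, $\gamma_1$ the current vertex, and leading coefficients exactly $1$. This rests on the orthogeodesic estimate of Lemma~\ref{le_poluchim_b_le_L_l_alpha}, which gives $\ell_{\gamma_{(2,0)}}\le 2\ell_{\gamma_1}-\ell_{\gamma_0}+O(1)$. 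Any multiplicative loss per step shrinks the density and, with it, the exponent.
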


    To prove Theorem \ref{th_ocenka_S_1_2}, we shall need several auxiliary results.

    We introduce the following notation.
    Let some Dehn--Thurston coordinates $(m, t)$ be fixed on the set $\mathcal{MC}_{0}^{4}$ 
    (see Definition in Section \ref{RefProject_D-T_coord}).
    Denote by $\gamma_{(m,t)}$ the integral multicurve with coordinates $(m, t)$.

    Let $\alpha_0, \alpha_1, \alpha_2, \alpha_3$ be the boundary components of the surface $S_{0}^4$.
    Let $\gamma$ be an essential simple closed curve on $S_{0}^4$ which is not isotopic to a boundary component.
    We say that $\gamma$ is of type $\Theta_i$ if it separates the boundary components $\alpha_0$ and
    $\alpha_i$ from the other boundary components of $\partial S_{0}^{4}$.
    It is easy to see that each of the three types contains some simple
    closed geodesic.
    Note that an integral multicurve $\gamma$ on $S_{0}^{4}$ can consist only
    of parallel copies of some simple closed curve $\delta$. The type of an integral multicurve $\gamma$
    is then defined to be the type of the curve $\delta$.

    \begin{theorem} \label{th_resenie_zadachi_S_0_4}
        The multicurve $\gamma_{(m, t)}$ consists of exactly one component if and only if \\ $\gcd\left(\frac{m}{2}, t\right) = 1$. 
        The partition into types is as follows.
        \begin{enumerate}
            \item $\Theta_1 = \{\gamma_{(m,t)} \mid (m, t) \in 2\mathbb{N} \times \mathbb{Z},\ \gcd\left(\frac{m}{2}, t\right) = 1,\ t - \text{ even}\},$ 
            \item $\Theta_2 = \{\gamma_{(m,t)} \mid (m, t) \in 2\mathbb{N} \times \mathbb{Z} \cup \{(0, 1)\},\ \gcd\left(\frac{m}{2}, t\right) = 1,\ t - \text{ odd},\ \frac{m}{2} - \text{ even}\},$
            \item $\Theta_3 = \{\gamma_{(m,t)} \mid (m, t) \in 2\mathbb{N} \times \mathbb{Z},\ \gcd\left(\frac{m}{2}, t\right) = 1,\ t - \text{ odd},\ \frac{m}{2} - \text{ odd}\}.$
        \end{enumerate}
        In general, the integral multicurve $\gamma_{(m,t)}$ consists of 
        $\Delta = \gcd\left(\frac{m}{2}, t\right)$ parallel 
        copies of the simple closed geodesic $\gamma_{\left( \frac{m}{\Delta}, \frac{t}{\Delta} \right)}$.
    \end{theorem}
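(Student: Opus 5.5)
The first claim and the final sentence follow directly from Lemma \ref{le_o_harakterizacii_multicrivyh}: $\gamma_{(m,t)}$ consists of $\Delta=\gcd(\frac m2,t)$ parallel copies of $\gamma_{(m/\Delta,\,t/\Delta)}$. Hence it is connected exactly when $\Delta=1$, using the conventions $\gcd(0,1)=1$ for the curve $\gamma_{(0,1)}$ and $\gcd(0,t)=t$ for the multicurves $\gamma_{(0,t)}$. It therefore remains to sort the primitive pairs, those with $\gcd(\frac m2,t)=1$, by type.

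The plan is to follow the type through the action of the mapping class group of $S_0^4$ fixing each boundary component. The type of a curve is exactly the partition $\{\{0,i\},\{j,k\}\}$ of the four boundary components that it induces. That partition changes in a controlled way under the half-twists, and in Dehn--Thurston coordinates the half-twists act by Lemma \ref{le_deystvie_T_gamma_0_1}.

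First I fix the labelling. Choose the pants curve $\alpha=\gamma_{(0,1)}$ to be of type $\Theta_2$, which is consistent with the statement placing $(0,1)$ in $\Theta_2$. Then $\gamma_{(2,0)}$, which meets $\alpha$ twice and is disjoint from the arcs $h_{\pm1}$, has a different type; label the boundaries so that it is $\Theta_1$. A curve of the remaining type $\Theta_3$ is obtained by a half-twist, namely $\gamma_{(2,\pm1)}$. This fits the claim, since $(2,\pm1)$ has $t$ odd and $\frac m2$ odd.

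Next I use the full Dehn twists $T_{\gamma_{(0,1)}}$ and $T_{\gamma_{(2,0)}}$. Each is a product of squares of half-twists, so it acts trivially on the boundary labels and preserves types. By \eqref{align_1} and \eqref{align_2} they act by $(m,t)\mapsto(m,t\mp m)$ and $(m,t)\mapsto(|m\pm4t|,\pm t)$. Writing $p=\frac m2$, these become $(p,t)\mapsto(p,t\mp2p)$ and $(p,t)\mapsto(|p\pm2t|,\pm t)$. Both maps preserve the parities of $p$ and $t$ (taking signs into account), which is exactly the invariant in the statement: $t$ even gives $\Theta_1$; $t$ odd with $p$ even gives $\Theta_2$; $t$ and $p$ both odd gives $\Theta_3$.

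Conversely, I must show that every coprime $(p,t)$ is reached from one of the representatives $(1,0)$, $(0,1)$, $(1,1)$ using these moves together with $t\mapsto -t$. The reflection is harmless: it comes from an orientation-reversing symmetry preserving $\alpha$ and the types, or it can be absorbed because $\gamma_{(2,-1)}$ and $\gamma_{(2,1)}$ lie in the same type, being images of each other under $T_\alpha$. The reduction is a Euclidean algorithm with even steps. If $|t|>p$, subtract $2p$ from $|t|$; if $p>|t|$, subtract $2|t|$ from $p$. This terminates at a pair with $|t|\le p$ and $p\le|t|$ up to the last step, i.e. at $(1,0)$, $(0,1)$ or $(1,\pm1)$ by coprimality.

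Since types are preserved along the way, the type of $(p,t)$ equals the type of its terminal representative. This gives the three sets, which are disjoint and exhaust the primitive pairs. The main obstacle I expect is the careful bookkeeping of the sign function $\theta$ and the boundary case $m\pm4t=0$ in \eqref{align_2}, together with justifying that $\gamma_{(2,\pm1)}$ really is of type $\Theta_3$ rather than $\Theta_2$. For the latter I would argue from the figure of $\gamma_{(2,0)}$ and $\gamma_{(0,1)}$: a single half-twist about $\alpha$ interchanges the two boundaries on one side of $\alpha$, and so sends $\Theta_1$ to $\Theta_3$.
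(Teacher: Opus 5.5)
Your proposal is correct and follows essentially the same route as the paper. The paper also argues that the Dehn twists $T_{\gamma_{(0,1)}}$ and $T_{\gamma_{(2,0)}}$ preserve type, that their action from Lemma~\ref{le_deystvie_T_gamma_0_1} preserves $\gcd(\frac m2,t)$ and the parities of $\frac m2$ and $t$, and that a Euclidean-type reduction lowering $|m|+|t|$ brings every primitive pair to $\gamma_{(2,0)}$, $\gamma_{(0,1)}$ or $\gamma_{(2,1)}$; the only differences are that the paper handles the sign bookkeeping, including the case $m\pm4t=0$, through the convention $\gamma_{(m,t)}=\gamma_{(-m,-t)}$ (so $T_{\gamma_{(2,0)}}^{\pm1}$ acts as $(m,t)\mapsto(m\mp4t,t)$ and no separate reflection $t\mapsto-t$ is needed), and it simply asserts the types of the three base curves, whereas you set up the labelling and argue that $\gamma_{(2,\pm1)}$ is of type $\Theta_3$.
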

    \begin{proof}
    Let $\varphi$ be the left Dehn twist along $\gamma_{(0,1)}$, and let $\psi$ be the left Dehn twist 
    along the curve $\gamma_{(2,0)}$.
    From Lemma \ref{le_deystvie_T_gamma_0_1} we have the following formulas
    \begin{align*}
     \varphi^{\pm1}\bigl(\gamma_{(m, t)}\bigr) =& ~\gamma_{(m, t \mp m)} \\
     \psi^{\pm1}\bigl(\gamma_{(m, t)}\bigr) =& ~\gamma_{\big(|m \pm 4t|, t \cdot \theta(m \pm 4t)\big)}. 
    \end{align*}
    Note that the maps $\varphi^{\pm 1}$ and $\psi^{\pm 1}$ do not change the type of an integral multicurve.

    Consider an arbitrary integral multicurve $\gamma_{(m,t)}$. 
    We adopt the convention that $\gamma_{(m,t)} = \gamma_{(-m,-t)}$.
    Then the action of $\psi$ can be written as
    \[ \psi^{\pm 1}(\gamma_{(m, t)}) = \gamma_{(m \mp 4t, t)}.\]
    By Lemma \ref{le_o_harakterizacii_multicrivyh}, it suffices to consider the case 
    where $\frac{m}{2}$ and $t$ are coprime. We now describe an analogue of the Euclidean algorithm, 
    in which we modify the integral multicurve, preserving its type and decreasing the parameter 
    $k = |m| + |t|$.

    \begin{enumerate}
    \item If $\frac{|m|}{2} < |t|$ and $\frac{|m|}{2} \neq 0$, then using $\varphi^{\pm 1}$ we pass to the geodesic 
    $\gamma_{(m, \tilde{t})}$ for which $\frac{|m|}{2} \ge |\tilde{t}|$. Note that the type 
    of the new geodesic $\gamma_{(m, \tilde{t})}$ is the same, and the inequality
    \[
    \tilde{k} = |m| + |\tilde{t}| \le |m| + \frac{|m|}{2} < |m| + |t| = k
    \]
    holds. Thus $\tilde{k} < k$, and the parameters of the new curve are again coprime.
    \item If $\frac{|m|}{2} = |t|$, then from $\gcd\left(\frac{m}{2}, t\right) = 1$ it follows 
    that $(m,t) \in \{\pm (2, 1), \pm(2, -1)\}$. Using $\varphi^{\pm 1}$ and the convention $(m,t) = (-m, -t)$, 
    we pass to the geodesic $\gamma_{(2, 1)}$, which is of type $\Theta_3$.
    \item If $\frac{|m|}{2} > |t|$ and $|t| \neq 0$, then using the maps $\psi^{\pm}$ we pass
    to the geodesic $\gamma_{(\tilde{m}, t)}$ for which $|\tilde{m}| < m$.
    Thus $\tilde{k} = |\tilde{m}| + |t| < k$.
    \item If $\frac{|m|}{2} = 0$, then from $\gcd\left(\frac{m}{2}, t\right) = 1$ it follows 
    that $(m,t) \in \{\pm (0, 1), \pm(0, -1)\}$. Using the convention $(m,t) = (-m, -t)$, 
    we pass to the geodesic $\gamma_{(0, 1)}$, which is of type $\Theta_2$.
    \item If $|t| = 0$, then from $\gcd\left(\frac{m}{2}, t\right) = 1$ it follows 
    that $(m,t) \in \{\pm (2, 0)\}$. Hence we pass to the geodesic 
    $\gamma_{(2, 0)}$, which is of type $\Theta_1$. 
    \end{enumerate}

    Thus, the above algorithm reduces any simple closed geodesic $\gamma_{(m,t)}$, 
    without changing its type, to one of $\gamma_{(0,1)}, \gamma_{(2,1)}, \gamma_{(2,0)}$.
     Note that the transformations 
    $\varphi^{\pm 1}$ and $\psi^{\pm}$ used above preserve $\gcd\left(\frac{m}{2}, t\right)$,
    the parity of $\frac{m}{2}$, and the parity of $t$.
    \end{proof}

    \begin{remark}
       The theorem proved above holds for any topological type of surface $S_{0, a}^{b}$ satisfying $a + b = 4$.
    \end{remark}

    \subsection{Some facts from hyperbolic geometry}

    \begin{lemma}[see \cite{litlink2}] \label{th_shest}
    Let a right-angled hyperbolic hexagon be given (see Figure \ref{fig3}). Then all perpendiculars to 
    opposite sides lie inside the hexagon, and the following metric relations hold:
    \[
        \sinh b_2 \, \sinh a_3 = \cosh h;
    \]
    \begin{equation} \label{eq_chb_1}
        \cosh b_1 \, \sinh a_2 \, \sinh a_3 = \cosh a_1 + \cosh a_2 \, \cosh a_3.
    \end{equation}
    In particular, the hexagon is uniquely determined by its three pairwise non-adjacent sides.
    \end{lemma}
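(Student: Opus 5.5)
The plan is to derive both relations from the standard trigonometry of the hyperboloid model, and then read off the remaining claims. I will take the labeling of Figure~\ref{fig3} in the usual cyclic form: the sides, in order around the hexagon, are $a_1, b_3, a_2, b_1, a_3, b_2$. Thus $b_i$ is the side opposite $a_i$, and $h$ is the common perpendicular between a pair of opposite sides.

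\textbf{Step 1: the geometric setup.} Realize $\mathbb{H}^2$ as the upper sheet of $\{x : \langle x,x\rangle=-1\}$ with the Minkowski form $\langle\cdot,\cdot\rangle$ of signature $(2,1)$. Represent each complete geodesic containing a side by its unit spacelike normal $n$, so that $\langle n,n\rangle=1$. Two facts are standard:
\begin{itemize}
\item For ultraparallel geodesics, with suitable orientations of the normals, $\langle n,n'\rangle=-\cosh d$, where $d$ is the distance between them. This $d$ is realized by their unique common perpendicular.
\item For perpendicular geodesics, $\langle n,n'\rangle=0$.
\end{itemize}
In the right-angled hexagon, the normals $n_1,n_2,n_3$ of the lines carrying $a_1,a_2,a_3$ are pairwise ultraparallel. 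Their pairwise distances are exactly $b_1,b_2,b_3$, because each $b_j$ is orthogonal to two of the $a$-lines. The same holds with the roles of $a$ and $b$ exchanged.

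\textbf{Step 2: the second relation \eqref{eq_chb_1}.} The lines of $a_2$ and $a_3$ meet the line of $b_1$ orthogonally. Hence the normal to $b_1$ is, up to sign and normalization, the Lorentz cross product $n_2\boxtimes n_3$, whose norm is $\sinh$ of the distance between the lines of $a_2$ and $a_3$. Computing $\langle n_2\boxtimes n_3,\, n_1\boxtimes n_2\rangle$ with the Lorentzian analogue of the Binet--Cauchy identity expresses the cosine-type invariant of the $b$-sides through the $a$-data. Applying the same computation to the dual triple of normals then gives
\[
\cosh b_1\sinh a_2\sinh a_3=\cosh a_1+\cosh a_2\cosh a_3 .
\]
This is the hyperbolic law of cosines for right-angled hexagons. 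The only care needed is fixing the sign conventions, chosen so that all normals point out of the hexagon.

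\textbf{Step 3: the relation $\sinh b_2\sinh a_3=\cosh h$.} Drop the common perpendicular $h$ between the appropriate pair of opposite sides; by the labeling these are the side $b_1$ and the side $a_1$. This cuts the hexagon into two right-angled pentagons. For a right-angled pentagon with consecutive sides $c_1,\dots,c_5$ there is the classical relation $\cosh c_3=\sinh c_1\sinh c_5$, with $c_3$ opposite the vertex between $c_1$ and $c_5$. The same Gram-matrix computation proves it, now with four normals, one Gram entry being $0$. Applied to the pentagon containing the sides $b_2$ and $a_3$, it yields $\sinh b_2\sinh a_3=\cosh h$.

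\textbf{Step 4: the remaining claims.} That the perpendicular between opposite sides lies inside the hexagon follows from convexity. The feet of the common perpendicular of the two (ultraparallel) lines fall inside the sides themselves, because the adjacent sides are orthogonal to those lines and the hexagon is convex. Uniqueness of the hexagon from $a_1,a_2,a_3$ follows from \eqref{eq_chb_1}, which determines each $b_i$. Concretely, two ultraparallel lines at distance $b_1$ are determined up to isometry, and the third line is then fixed by its distances to them.

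The main obstacle I expect is purely bookkeeping: consistent orientation of the normals, so that every sign in the Gram computations comes out right, and matching the letters to the unseen Figure~\ref{fig3}. All of this is classical (Beardon, Buser~\cite{Bus}), so in the paper I would likely simply cite it as the statement does.
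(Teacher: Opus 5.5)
Your argument is correct, and your guessed labeling (cyclic order $a_1,b_3,a_2,b_1,a_3,b_2$, with $h$ the common perpendicular of $a_1$ and $b_1$) is exactly the one in Figure~\ref{fig3}. The paper gives no proof: the lemma is quoted from \cite{litlink2} and used only to derive Lemma~\ref{le_dlina_perpendikulyara_v_shest}. Your hyperboloid-model derivation is therefore a self-contained substitute for that citation.

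What your route buys is uniformity and a clear view of where signs matter. Both relations come from three ingredients:
\begin{itemize}
\item unit normals in $\mathbb{R}^{2,1}$;
\item the identity $\langle n\boxtimes n',n\boxtimes n'\rangle=\langle n,n'\rangle^2-1$;
\item linear dependence of four vectors in a $3$-dimensional space.
\end{itemize}
In the pentagon step the vanishing Gram determinant gives $\cosh^2 c_3=\sinh^2 c_1\sinh^2 c_5$, so no orientation bookkeeping is needed there. In the cosine law the only sign input is $\langle m_1,m_2\rangle\langle m_2,m_3\rangle\langle m_3,m_1\rangle<0$, where $m_i$ are the normals of the $b$-lines. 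This sign does not depend on how the $m_i$ are oriented. It holds because none of the three lines separates the other two, which is what your outward-normal convention encodes. With the opposite sign you would get $|\cosh a_1-\cosh a_2\cosh a_3|$ instead.

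\medskip
\noindent\textbf{Details to tidy.}

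\emph{Gram entries.} Any four of the five normals of a right-angled pentagon include three consecutive pairs. So three off-diagonal Gram entries vanish, not one.

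\emph{Step~2.} Your first computation expresses $\cosh a_2$ through the $b$'s, the reverse of what you wrote. It is shorter to start from the $m_i$ directly. Then $n_2\propto m_3\boxtimes m_1$ and $n_3\propto m_1\boxtimes m_2$ have norms $\sinh a_2$ and $\sinh a_3$. The identity $|\langle m_3\boxtimes m_1,m_1\boxtimes m_2\rangle|=\cosh a_1+\cosh a_2\cosh a_3$ is then exactly \eqref{eq_chb_1}.

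\emph{Order of Steps~3 and~4.} Step~3 uses that the feet of $h$ lie on the sides, so Step~4 must come first, and it needs one more sentence:
\begin{itemize}
\item Two perpendiculars to the same line are disjoint.
\item Hence a perpendicular to the line of $a_1$ with foot beyond the vertex $a_1\cap b_2$ lies entirely on the far side of the line of $b_2$.
\item The line of $b_1$ is disjoint from the line of $b_2$, since both are orthogonal to the line of $a_3$. So it lies on the hexagon's side.
\item Therefore such a perpendicular cannot meet the line of $b_1$. The remaining cases are symmetric.
\end{itemize}

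\emph{Uniqueness.} A line at prescribed distances from two ultraparallel lines is not unique: reflection in their common perpendicular gives a second one. It is cleaner to argue as follows. Relation \eqref{eq_chb_1} and its cyclic versions determine $b_1,b_2,b_3$. A polygon with prescribed side lengths and angles is determined up to isometry by laying off its sides in order.
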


    \begin{figure}[h]
        \centering
        \begin{tikzpicture} 
            \draw (0, 0) -- node[left] {$h$} cycle ;
            \draw (0, 1.7) -- node[above] {$a_1$} cycle ; 
            \draw (0, -1.7) -- node[below] {$b_1$} cycle ;
            \draw (1.9, 0.7) -- node[above] {$b_2$} cycle ;
            \draw (-1.9, 0.7) -- node[above] {$b_3$} cycle ;
            \draw (1.9, -0.7) -- node[below] {$a_3$} cycle ;
            \draw (-1.9, -0.7) -- node[below] {$a_2$} cycle ;
            \draw[very thick] (2,0) to [out=135,in=-75] (1,1.732) 
                                    to [out=-165,in=-25] (-1,1.732)
                                    to [out=-115,in=45] (-2,0)
                                    to [out=-45,in=105] (-1,-1.732)
                                    to [out=15,in=165] (1,-1.732)
                                    to [out=75,in=-135] (2.01,0.01);
             \draw[very thick] (1.8,-0.2) to [out=135,in=-45] (1.6,0)  to [out=45,in=-135] (1.8,0.2);
             \begin{scope}[rotate=60]
             \draw[very thick] (1.8,-0.2) to [out=135,in=-45] (1.6,0)  to [out=45,in=-135] (1.8,0.2);
             \end{scope}
             \begin{scope}[rotate=120]
             \draw[very thick] (1.8,-0.2) to [out=135,in=-45] (1.6,0)  to [out=45,in=-135] (1.8,0.2);
             \end{scope}
             \begin{scope}[rotate=180]
             \draw[very thick] (1.8,-0.2) to [out=135,in=-45] (1.6,0)  to [out=45,in=-135] (1.8,0.2);
             \end{scope}
              \begin{scope}[rotate=-60]
             \draw[very thick] (1.8,-0.2) to [out=135,in=-45] (1.6,0)  to [out=45,in=-135] (1.8,0.2);
             \end{scope}
             \begin{scope}[rotate=-120]
             \draw[very thick] (1.8,-0.2) to [out=135,in=-45] (1.6,0)  to [out=45,in=-135] (1.8,0.2);
             \end{scope}
            \draw[very thick] (0,1.55) to [out = -90, in = 90] (0,-1.6);
            \draw[very thick] (0.2,1.55) to [out=-90,in=90] (0.2, 1.35) to [out=180,in=0] (0, 1.35);
            \draw[very thick] (0.2,-1.57) to [out=90,in=-90] (0.2, -1.35) to [out=180,in=0] (0, -1.35);
    
        \end{tikzpicture}
        \caption{A hyperbolic hexagon.}
        \label{fig3}
    \end{figure}

    \begin{lemma} \label{le_dlina_perpendikulyara_v_shest}
    Let a right-angled hyperbolic hexagon be given (see Figure \ref{fig3}). Then the following equality holds:
    \begin{align}
        \cosh 2h = 2 \frac{\cosh^2 b_2 + \cosh^2 b_3 + 2 \cosh b_1 \cosh b_2 \cosh b_3}{\sinh^2 b_1} + 1. \label{eq_ch2h}
    \end{align}
    \end{lemma}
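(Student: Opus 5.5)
The plan is to reduce \eqref{eq_ch2h} to the two identities of Lemma~\ref{th_shest} by pure algebra. The perpendicular $h$ joins the opposite sides $a_1$ and $b_1$. Since $\cosh 2h = 1 + 2\sinh^2 h$, the claimed equality is equivalent to
\[
\sinh^2 h\,\sinh^2 b_1 = \cosh^2 b_2 + \cosh^2 b_3 + 2\cosh b_1\cosh b_2\cosh b_3 .
\]
So it suffices to express $\sinh^2 h$ through the sides $b_1,b_2,b_3$ alone.

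\textbf{Step 1.} The first relation of Lemma~\ref{th_shest}, $\cosh h = \sinh b_2 \sinh a_3$, gives
\[
\sinh^2 h = \sinh^2 b_2\,\sinh^2 a_3 - 1 .
\]
The remaining task is to eliminate $a_3$.

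\textbf{Step 2.} Apply \eqref{eq_chb_1} to the triple of pairwise non-adjacent sides $b_1,b_2,b_3$, with the roles of the $a$'s and $b$'s interchanged. This is legitimate because the hexagon law is symmetric under relabeling: the $b$'s are also three pairwise non-adjacent sides, and $a_3$ is the side opposite $b_3$. We obtain
\[
\cosh a_3\,\sinh b_1\,\sinh b_2 = \cosh b_3 + \cosh b_1\cosh b_2 .
\]
The only point that needs care is to check from Figure~\ref{fig3} that the labeling really matches: the sides alternate as $a_1,b_2,a_3,b_1,a_2,b_3$, so $a_3$ is indeed opposite $b_3$ and adjacent to $b_1$ and $b_2$. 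This bookkeeping is the main (mild) obstacle; the rest is routine.

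\textbf{Step 3.} Write $\sinh^2 a_3 = \cosh^2 a_3 - 1$, substitute the expression from Step~2, and multiply the identity of Step~1 by $\sinh^2 b_1$:
\[
\sinh^2 h\,\sinh^2 b_1 = (\cosh b_3 + \cosh b_1\cosh b_2)^2 - \sinh^2 b_1\sinh^2 b_2 - \sinh^2 b_1 .
\]
Expanding the square, the terms not involving $\cosh b_3$ are
\[
\cosh^2 b_1\cosh^2 b_2 - \sinh^2 b_1\,(\sinh^2 b_2 + 1) = \cosh^2 b_1\cosh^2 b_2 - \sinh^2 b_1\cosh^2 b_2 = \cosh^2 b_2 .
\]
Hence the right-hand side equals
\[
\cosh^2 b_2 + \cosh^2 b_3 + 2\cosh b_1\cosh b_2\cosh b_3 .
\]
Dividing by $\sinh^2 b_1$, multiplying by $2$ and adding $1$ yields \eqref{eq_ch2h}.
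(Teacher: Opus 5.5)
Your proof is correct and follows essentially the paper's route: both reduce the identity to $\cosh h = \sinh b_2 \sinh a_3$ together with the hexagon cosine law $\cosh a_3 \sinh b_1 \sinh b_2 = \cosh b_3 + \cosh b_1 \cosh b_2$ (with the roles of the $a_i$ and $b_i$ interchanged, which your labeling check justifies), and your algebra is right. The only difference is cosmetic: the paper writes $2\cosh^2 h$ as the sum $\sinh^2 b_2 \sinh^2 a_3 + \sinh^2 b_3 \sinh^2 a_2$ of the pentagon expressions from both halves of the hexagon, whereas your use of $\cosh 2h = 1 + 2\sinh^2 h$ needs only one half and is slightly shorter.
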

    \begin{proof}
    Using the equalities from Lemma \ref{th_shest}, we obtain
    \begin{align}
        \cosh 2h &= 2 \cosh^2 h - 1 = \sinh^2 b_2 \sinh^2 a_3 + \sinh^2 b_3 \sinh^2 a_2 - 1 \\
                &= \sinh^2 b_2 \cosh^2 a_3 + \sinh^2 b_3 \cosh^2 a_2 - \sinh^2 b_2 - \sinh^2 b_3 - 1. \label{eq_sh_2b_2}
    \end{align}
    We use equality \eqref{eq_chb_1} from Lemma \ref{th_shest}. 
    \begin{align*}
        \sinh^2 b_2 \cosh^2 a_3 
        &= \left( \frac{\cosh b_3 + \cosh b_1 \cosh b_2}{\sinh b_1} \right)^2 \\
        &= \frac{\cosh^2 b_2 + \cosh^2 b_3 + 2\cosh b_1 \cosh b_2 \cosh b_3}{\sinh^2 b_1} + \cosh^2 b_2.
    \end{align*}
    Substituting the obtained expression for $\sinh^2 b_2 \cosh^2 a_3$ and the analogous expression for 
    $\sinh^2 b_3 \cosh^2 a_2$ into formula \eqref{eq_sh_2b_2}, we obtain the required equality.
    \end{proof}

    \begin{lemma} \label{le_assimptotika_perpendikulyara_v_shest}
    Let a right-angled hyperbolic hexagon be given (see Figure \ref{fig3}) and suppose that $b_1 \le b_2$. Then the following estimate holds:
    \[
        2h \le 2(b_2 - b_1) + F(b_1) + G(b_3),
    \]
    where $F(b_1)$ is positive and strictly decreasing, and $G(b_3)$ is positive and strictly increasing.

    \end{lemma}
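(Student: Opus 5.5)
The natural starting point is the exact formula \eqref{eq_ch2h} of Lemma \ref{le_dlina_perpendikulyara_v_shest}. Writing $N = \cosh^2 b_2 + \cosh^2 b_3 + 2\cosh b_1\cosh b_2\cosh b_3$, we have
\[
e^{2h} \le \cosh 2h + \sinh 2h \le 2\cosh 2h = \frac{4N}{\sinh^2 b_1} + 2 ,
\]
so it suffices to bound $N$ from above by $e^{2b_2}$ times a factor depending only on $b_3$. The term $4/\sinh^2 b_1$ then produces $e^{-2b_1}$ times a factor depending only on $b_1$.

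For the numerator I use $\cosh b_1 \le \cosh b_2$, which follows from the hypothesis $b_1 \le b_2$. This gives
\[
N \le \cosh^2 b_2\,\bigl(1 + 2\cosh b_3\bigr) + \cosh^2 b_3 .
\]
Since $\cosh b_2 \ge 1$, the last term is at most $\cosh^2 b_2 \cosh^2 b_3$. Hence
\[
N \le \cosh^2 b_2\,(1+\cosh b_3)^2 \le e^{2b_2}\,(1+\cosh b_3)^2 .
\]
For the denominator I write $\sinh^2 b_1 = e^{2b_1}(1-e^{-2b_1})^2/4$, which gives
\[
\frac{4N}{\sinh^2 b_1} \le \frac{16\, e^{2(b_2-b_1)}(1+\cosh b_3)^2}{(1-e^{-2b_1})^2}.
\]

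The additive constant $2$ is absorbed using $b_2 \ge b_1$, so that $e^{2(b_2-b_1)} \ge 1$:
\[
\frac{16(\cdots)}{(\cdots)} + 2 \le e^{2(b_2-b_1)} \left( \frac{16(1+\cosh b_3)^2}{(1-e^{-2b_1})^2} + 2 \right).
\]
Taking logarithms, it remains to split the logarithm of the bracket into a sum $F(b_1) + G(b_3)$. Using $A + 2 \le (A_1+1)(A_2+1)$-type factorisations, the bracket is at most
\[
\frac{18}{(1-e^{-2b_1})^2}\,(1+\cosh b_3)^2 .
\]
I then set
\[
F(b_1) = \log 18 - 2\log\bigl(1-e^{-2b_1}\bigr), \qquad G(b_3) = 2\log\bigl(1+\cosh b_3\bigr).
\]
Here $F$ is positive and strictly decreasing on $(0,\infty)$, and $G$ is positive and strictly increasing on $(0,\infty)$. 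This yields $2h \le 2(b_2-b_1) + F(b_1) + G(b_3)$.

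The only delicate step is the bookkeeping of monotonicity. I must use $b_1 \le b_2$ in exactly two places: to replace $\cosh b_1$ by $\cosh b_2$ inside the cross term, and to absorb the additive constant. I also have to check that no stray dependence on $b_2$ survives outside the $e^{2(b_2-b_1)}$ factor. Beyond this, everything is elementary once formula \eqref{eq_ch2h} is available.
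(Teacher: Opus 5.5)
Your proof is correct and follows essentially the paper's route: start from \eqref{eq_ch2h}, use $b_1 \le b_2$ both to replace $\cosh b_1$ by $\cosh b_2$ in the cross term and to absorb the additive constant, then take logarithms and split off $F(b_1)$ and $G(b_3)$. The paper differs only cosmetically: it absorbs the $+1$ before doubling (giving the factor $3$, then $6$) and bounds $\cosh$ by $e^x$ termwise, which changes only the constants. Your vaguely justified bound of the bracket by $18(1+\cosh b_3)^2/(1-e^{-2b_1})^2$ holds simply because $(1+\cosh b_3)^2/(1-e^{-2b_1})^2 \ge 4 \ge 1$.
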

    \begin{proof}
    From equality \eqref{eq_ch2h} and the inequality $b_1 \le b_2$, it follows that
    \[
        \cosh 2h \le 3 \frac{\cosh^2 b_2 + \cosh^2 b_3 + 2 \cosh b_1 \cosh b_2 \cosh b_3}{\sinh^2 b_1}.
    \]
    Using the elementary estimate $\frac{e^x}{2} \le \cosh x \le e^x$, we obtain 
    \[
        e^{2h} \le 6 \frac{e^{2b_2} + e^{2b_3} + 2 e^{b_1 + b_2 + b_3}}{\sinh^2 b_1} \le 
        6 e^{2b_2} \frac{1 + e^{2b_3} + 2 e^{b_3}}{\sinh^2 b_1}.
    \]
    Taking logarithms, we obtain
    \[
        2h \le 2b_2 - 2\log(\sinh b_1) + \log(1 + e^{2b_3} + 2 e^{b_3}) + \log 6.
    \]
    After simplification, we get
    \[
        2h \le 2(b_2 - b_1) + F(b_1) + G(b_3),
    \]
    where $F(b_1) = 2 \log\left(\frac{2}{1 - e^{-2 b_1}}\right)$ and $G(b_3) = 2 \log(1 + e^{b_3}) + \log 6$.
    It remains to note that the functions $F$ and $G$ satisfy the conditions of the lemma.
    \end{proof}

    Let $P$ be a hyperbolic pair of pants whose boundary components are geodesic.
    Denote the boundary components of $P$ by $\gamma_0, \gamma_1, \delta$. 
    Cut $P$ into two right-angled hyperbolic hexagons $P_i$, $i \in \{1, 2\}$,
    by means of geodesic arcs $a, b, c$ lying in $P$, whose endpoints lie on $\gamma_1$ and $\delta$, 
    $\gamma_0$ and $\gamma_1$, and $\gamma_0$ and $\delta$, respectively, so that each of these arcs is orthogonal to the corresponding boundary components.
    Let $\gamma_0^i$ denote the arc obtained by intersecting the geodesic $\gamma_0$ with the hexagon
    $P_i$. Draw the perpendiculars $h_i$ in the hexagons $P_i$ between $\gamma_0^i$ and $a$.
    Note that the figure is symmetric with respect to the horizontal axis; in particular, $\ell_{h_1} = \ell_{h_2}$.
    The above construction is illustrated in Figure \ref{fig_10}.
    \begin{figure}[h]

        \centering
        
        \begin{tikzpicture} 
            \draw (-1.2, 0) -- node[left] {$\gamma_1$} cycle;
            \draw ( 1.7, 0) -- node[left] {$\delta$} cycle ;
            \draw (-2.5, 0) -- node[above] {$b$} cycle ; 
            \draw (2.5, 0) -- node[above] {$c$} cycle ;
            \draw (0, 0) -- node[above] {$a$} cycle ;
            \draw (1.1, 1.5) -- node[right] {$h_1$} cycle ;
            \draw (1.1, -1.5) -- node[right] {$h_2$} cycle ;
            \draw (0, 3) -- node[above] {$\gamma_0^1$} cycle ;
            \draw (0, -3) -- node[below] {$\gamma_0^2$} cycle ;
            \draw (-1.4, 1.2) -- node[above] {$P_1$} cycle ;
            \draw (-1.4, -1.2) -- node[below] {$P_2$} cycle ;

            \draw[very thick] (-3,0) to [out=0, in=180] (-2,0) 
                                    to [out=90, in=180] (-1.5,0.5)
                                    to [out=0, in=90] (-1,0)
                                    to [out=0, in=180] (1,0)
                                    to [out=90, in=180] (1.5,0.5)
                                    to [out=0,in=90] (2,0)
                                    to [out=0,in=180] (3,0)
                                    to [out=90,in= 0] (0,3)
                                    to [out=180,in= 90] (-3,0)
                                    to [out=-90,in= 180] (0,-3)
                                    to [out=0,in=-90] (3,0);

            \draw[very thick] (1.5, 2.598) to [out=-120, in=90] (0.5,0)
                                            to [out=-90, in=120] (1.5, -2.598); 

            \draw[very thick] (-2,0) to [out=-90, in=180] (-1.5,-0.5)
                                        to [out=0, in=-90] (-1,0);

            \draw[very thick] (2,0) to [out=-90, in=0] (1.5,-0.5)
                                        to [out=180, in=-90] (1,0);

            \draw[very thick] (-2.9, 0) to [out=-90, in=90] (-2.9,-0.1)
            to [out=180, in=0] (-3,-0.1);

            \draw[very thick] (-0.9, 0) to [out=-90, in=90] (-0.9,-0.1)
            to [out=180, in=0] (-1,-0.1);
            
            \draw[very thick] (2.1, 0) to [out=-90, in=90] (2.1,-0.1)
            to [out=180, in=0] (2,-0.1);

            \draw[very thick] (-2.1, 0) to [out=-90, in=90] (-2.1,-0.1)
            to [out=0, in=180] (-2,-0.1);

            \draw[very thick] (0.4,0) to [out=-90, in=90] (0.4,-0.1)
            to [out=0, in=180] (0.5,-0.1);

            \draw[very thick] (2.9,0) to [out=-90, in=90] (2.9,-0.1)
            to [out=0, in=180] (3,-0.1);

            \draw[very thick] (0.9,0) to [out=-90, in=90] (0.9,-0.1)
            to [out=0, in=180] (1,-0.1);

            \draw[very thick] (1.35, 2.698) to [out=-120, in=60] (1.26,2.5)
            to [out=-30, in=145] (1.41,2.4);

            \draw[very thick] (1.35, -2.698) to [out=120, in=-60] (1.26, -2.5)
            to [out=30, in= -145] (1.41, -2.4);

        \end{tikzpicture}
        
        \caption{Figure for Lemma \ref{le_poluchim_b_le_L_l_alpha}.}
        
        \label{fig_10}
    \end{figure}
    From Lemma \ref{le_assimptotika_perpendikulyara_v_shest} we immediately obtain the following.

    \begin{lemma}\label{le_poluchim_b_le_L_l_alpha}
        Suppose that $\ell_{\gamma_0} \le \ell_{\gamma_1}$. Then
        \[
            2 \ell_{h_1} = \ell_{h_1} + \ell_{h_2} \le \ell_{\gamma_1} - \ell_{\gamma_0} + F(\ell_{\gamma_0}) + G(\ell_{\delta}), 
        \]
        where $F(x) > 0$ is strictly decreasing and $G(x) > 0$ is strictly increasing.
    \end{lemma}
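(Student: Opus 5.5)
The plan is to apply Lemma \ref{le_assimptotika_perpendikulyara_v_shest} to the hexagon $P_1$, once its sides are matched with the labelling of Figure \ref{fig3}. The hexagon $P_1$ is right-angled. Its six sides alternate between two kinds:
\begin{itemize}
\item three arcs of the boundary geodesics, namely $\gamma_0^1$, half of $\gamma_1$ and half of $\delta$;
\item the three seams $a, b, c$.
\end{itemize}
Since $P$ is the double of $P_1$ along the seams, the boundary arcs have lengths $\ell_{\gamma_0}/2$, $\ell_{\gamma_1}/2$ and $\ell_{\delta}/2$. These three sides are pairwise non-adjacent, so they can play the role of $b_1, b_2, b_3$ in Figure \ref{fig3}. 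We set
\[
b_1 = \tfrac{1}{2}\ell_{\gamma_0}, \quad b_2 = \tfrac{1}{2}\ell_{\gamma_1}, \quad b_3 = \tfrac{1}{2}\ell_{\delta}.
\]

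Next we identify $h_1$ with the common perpendicular $h$ of Figure \ref{fig3}. In Figure \ref{fig3}, $h$ joins $b_1$ to the opposite side $a_1$, and $a_1$ is the side lying between $b_2$ and $b_3$. In $P_1$, the side opposite $\gamma_0^1$ is the seam joining $\gamma_1$ to $\delta$, which is exactly $a$. So $h_1$ is precisely the perpendicular $h$. Lemma \ref{th_shest} guarantees that this perpendicular lies inside the hexagon, so the construction of $h_1$ makes sense. The hypothesis $\ell_{\gamma_0} \le \ell_{\gamma_1}$ translates into $b_1 \le b_2$. This is exactly the hypothesis of Lemma \ref{le_assimptotika_perpendikulyara_v_shest}, which gives
\[
2\ell_{h_1} \le \ell_{\gamma_1} - \ell_{\gamma_0} + F_0\!\left(\tfrac{1}{2}\ell_{\gamma_0}\right) + G_0\!\left(\tfrac{1}{2}\ell_{\delta}\right),
\]
where $F_0, G_0$ are the functions from that lemma. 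We then define $F(x) = F_0(x/2)$ and $G(x) = G_0(x/2)$. Rescaling the argument preserves positivity and strict monotonicity, so $F$ is positive and strictly decreasing and $G$ is positive and strictly increasing.

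Finally, the equality $\ell_{h_1} = \ell_{h_2}$ comes from the orientation-reversing isometric involution of $P$ that fixes the seams $a, b, c$ and swaps $P_1$ with $P_2$. This involution maps $\gamma_0^1$ to $\gamma_0^2$ and fixes $a$. By uniqueness of the common perpendicular between two ultraparallel geodesic sides, it carries $h_1$ to $h_2$, so the two lengths agree. Combining this with the displayed inequality gives the claim.

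The only step requiring care is the matching of labels: one must check that the pair ($\gamma_0^1$, $a$) really corresponds to the opposite pair ($b_1$, $a_1$), and that $b_2, b_3$ are the sides adjacent to $a$. Both facts follow from the cyclic order of the sides, $\gamma_0^1, b, \gamma_1\text{-half}, a, \delta\text{-half}, c$. Formula \eqref{eq_ch2h} is symmetric in $b_2$ and $b_3$, so assigning $\gamma_1$ to $b_2$ is allowed, and this is the assignment for which $b_1 \le b_2$ holds.
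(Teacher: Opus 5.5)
Your proposal is correct and follows essentially the same route as the paper, which derives this lemma directly from Lemma \ref{le_assimptotika_perpendikulyara_v_shest} applied to the hexagon $P_1$. Your assignment $b_1=\tfrac12\ell_{\gamma_0}$, $b_2=\tfrac12\ell_{\gamma_1}$, $b_3=\tfrac12\ell_{\delta}$, the rescaling $F(x)=F_0(x/2)$, $G(x)=G_0(x/2)$, and the reflection argument for $\ell_{h_1}=\ell_{h_2}$ spell out steps the paper leaves implicit.
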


    We now consider the surface $S_{1, 2}$ equipped with a fixed hyperbolic metric $X$.
    Fix two disjoint and non-isotopic simple closed non-separating geodesics $\gamma_0$ and $\gamma_1$, 
    each of which is not isotopic to a loop around a puncture. Denote by $\Sigma_{\gamma_1}$ the hyperbolic 
    surface obtained from $S_{1, 2}$ by cutting along $\gamma_1$. Here (and below) we mean 
    that $\Sigma_{\gamma_1}$ is a hyperbolic surface of topological type $S_{0, 2}^{2}$ with 
    boundary consisting of two copies of $\gamma_1$.

    \begin{lemma} \label{th_conbinatorial_lenght_S_0_4}
    Suppose that $\ell_{\gamma_0} \le \ell_{\gamma_1}$. Then for integral multicurves 
    on $\Sigma_{\gamma_1}$ there exist 
    Dehn--Thurston coordinates $(m,t)$ associated with the pants decomposition $\mathcal{P} = \{\gamma_0\}$, 
    satisfying the following properties:
            
    \begin{enumerate}
        \item For any integral multicurve $\gamma = \gamma_{(m, t)}$ on $\Sigma_{\gamma_1}$
        with Dehn--Thurston coordinates $(m,t)$, the following inequality holds:
        \[
        \ell_{\gamma} \le m \ell_{\gamma_1} + |t| \ell_{\gamma_0} + m C, 
        \] 
        where $C$ is a positive constant depending only on the metric $X$.
        \item The curve $\gamma_{(2, 0)}$ is separating on $S_{1,2}$.
    \end{enumerate}

    \end{lemma}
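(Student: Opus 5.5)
The plan is to build the Dehn--Thurston data on $\Sigma_{\gamma_1}$ (a surface of type $S_{0,2}^{2}$, treated as $S_0^4$ with two punctures and two geodesic boundary copies $\gamma_1^{\pm}$) around the pants curve $\alpha=\gamma_0$. The arcs $h_{-1},h_1$ will be geodesic, and Lemma \ref{le_ocenka_dliny_v_koordinatah_D_T} will then give $\ell_{\gamma_{(m,t)}}\le \frac m2(\ell_{h_{-1}}+\ell_{h_1})+|t|\ell_{\gamma_0}$. Property (1) therefore reduces to the single estimate
\[
\ell_{h_{\pm1}}\le \ell_{\gamma_1}+C .
\]

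First we identify the two pairs of pants. Since $\gamma_0$ and $\gamma_1$ are disjoint, non-isotopic and non-separating on $S_{1,2}$, the curve $\gamma_0$ in $\Sigma_{\gamma_1}$ separates $\Sigma_{\gamma_1}$ into two pants $P_{\pm1}$. Each $P_{\pm1}$ has boundary $\gamma_0$, one copy $\gamma_1^{\pm}$ of $\gamma_1$, and one puncture $\delta_{\pm}$. Indeed, if both copies of $\gamma_1$ lay in the same pant, $\gamma_0$ would be separating on $S_{1,2}$ or peripheral. In each $P_{\pm1}$ the arc type of an integral multicurve crossing $\gamma_0$ is the unique simple arc from $\gamma_0$ to itself that goes around $\gamma_1^{\pm}$. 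We take $h_{\pm1}$ to be (a slight modification of) the arc in $P_{\pm1}$ formed by $h_1$, the segment of $a$ with its endpoints, and $h_2$, in the notation of Figure \ref{fig_10}. Concretely, it runs from $\gamma_0$ along the perpendicular $h_1$ to the seam $a$, along $a$ (or around half of $\gamma_1$) and back along $h_2$. It must also be adjusted so that its endpoints land on the fixed arc $b$ on $\gamma_0$, which costs at most $\ell_{\gamma_0}$ extra. Its length is then bounded by $\ell_{h_1}+\ell_{h_2}+\ell_a+\ell_{\gamma_0}+O(1)$.

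For a puncture, Lemma \ref{le_poluchim_b_le_L_l_alpha} has to be applied in the degenerate form $\ell_\delta=0$; alternatively one can replace the cusp by a short geodesic via Construction \ref{con_Psi} and Lemma \ref{le_ocenka_general_Mirz}. That lemma gives $\ell_{h_1}+\ell_{h_2}\le \ell_{\gamma_1}-\ell_{\gamma_0}+F(\ell_{\gamma_0})+G(0)$. The length $\ell_a$ and the connecting pieces are bounded using the hexagon formulas of Lemma \ref{th_shest}, since $\cosh\ell_a$ is controlled by $\ell_{\gamma_1}$ and $\ell_{\gamma_0}$. We only need $\ell_a=O(1)$ when $\ell_{\gamma_0}$ is bounded below, and otherwise we absorb it, using the extra $-\ell_{\gamma_0}$ and the $|t|$ slack. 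Since $\gamma_0$ is fixed, $F(\ell_{\gamma_0})$ is a constant. This gives $\ell_{h_{\pm1}}\le \ell_{\gamma_1}+C$, and summing yields $\ell_\gamma\le m\ell_{\gamma_1}+|t|\ell_{\gamma_0}+mC$.

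Property (2) comes from the freedom in choosing the twist origin $v$: changing it shifts $t$ by a multiple of $m$ via Lemma \ref{le_deystvie_T_gamma_0_1}. We choose $v$ so that $\gamma_{(2,0)}$ is the curve bounding $P_{-1}\cup$ (collar of $\gamma_0$) together with $\gamma_1^-$ and $\delta_-$. On $S_{1,2}$ this curve bounds the two punctures from the rest, i.e. it is the separating curve of type $\Theta$ enclosing $\delta_+,\delta_-$. Among the three types listed in Theorem \ref{th_resenie_zadachi_S_0_4}, exactly one type (separating $\{\delta_+,\delta_-\}$ from $\{\gamma_1^+,\gamma_1^-\}$) is separating in $S_{1,2}$. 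Rechoosing $h_j,v$ (which only changes lengths by $O(1)$ per arc) lets us make $\gamma_{(2,0)}$ of that type.

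The main obstacle is the uniform length estimate for $h_{\pm1}$. The additive constant must be independent of $\gamma_1$, because $\gamma_1$ will later vary along the tree $\mathcal{B}_{1,2}$, and the cusp case of the hexagon lemma has to be handled. The plan there is to pass to the limit $\ell_\delta\to0$ in Lemma \ref{le_dlina_perpendikulyara_v_shest}, where the formula remains finite.
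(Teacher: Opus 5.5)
Your core estimate is the paper's. In each pair of pants of $\Sigma_{\gamma_1}$ cut along $\gamma_0$ you take the orthogeodesic $h_1\cup h_2$ from $\gamma_0$ to itself and bound it by Lemma \ref{le_poluchim_b_le_L_l_alpha} with $\ell_\delta=0$. You then pay at most $\ell_{\gamma_0}$ to bring its endpoints to $b$, and the $-\ell_{\gamma_0}$ in that lemma cancels this cost. Using Lemma \ref{le_ocenka_dliny_v_koordinatah_D_T} with these arcs is equivalent to the paper's use of \eqref{eq_SR}, with $\gamma_{(2,0)}$ the closed curve they form.

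\textbf{Property (2) is the genuine gap.} The mechanism you propose cannot work:
\begin{itemize}
\item Re-choosing the twist origin in your sense, a shift of $t$ by a multiple of $m=2$ (composing with powers of $T_{\gamma_0}$), preserves the parity of $t$ in $\gamma_{(2,t)}$. By Theorem \ref{th_resenie_zadachi_S_0_4} it therefore never changes the type of $\gamma_{(2,0)}$.
\item The curve you describe, bounding $P_{-1}$ together with $\gamma_1^-$ and $\delta_-$, is $\gamma_0=\gamma_{(0,1)}$ itself, which is non-separating.
\end{itemize}
What actually decides the type is a half-twist choice, $t\mapsto t\pm m/2$. When you pin the endpoints of each orthogeodesic into $b$, you must choose which complementary arc of $\gamma_0$ is collapsed into $b$. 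You must collapse the $\gamma_1^{\pm}$-side in both pants, or the cusp side in both. Then the untwisted $\gamma_{(2,0)}$ encloses $\gamma_1^+\cup\gamma_1^-$ and separates $S_{1,2}$.

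This re-choice is also not an $O(1)$ change. Switching sides after the fact moves endpoints by an amount of order $\ell_{\gamma_0}$, which is unbounded along the tree. An extra $c\,\ell_{\gamma_0}$ per arc leaves a term $\asymp m\,\ell_{\gamma_0}$ in the final bound, which the statement does not allow. The accounting closes only if the side is chosen at the sliding step. For either choice, minimizing $|x|+|x-a|$ over $x$ in a coset of $\ell_{\gamma_0}\mathbb{Z}$ (with $a$ the length of the collapsed arc) shows each arc costs at most $\ell_{\gamma_0}$. The paper builds this in from the start. It joins $h$ and $\hat h$ by arcs along $\gamma_0$ of total length $\le \ell_{\gamma_0}$, chosen so that the closed curve $\alpha$ separates $S_{1,2}$, and only then picks coordinates with $\alpha=\gamma_{(2,0)}$.

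\textbf{Further problems in the length estimate.}
\begin{itemize}
\item The term $\ell_a$ should not appear. By the reflection symmetry of the pants, $h_1$ and $h_2$ meet the seam $a$ orthogonally at the same point, so $h_1\cup h_2$ is already a single geodesic arc.
\item Taken literally, your bound is useless. The seam $a$ joins $\gamma_1$ to the cusp and has infinite length, so it is not controlled by $\ell_{\gamma_0}$ and $\ell_{\gamma_1}$.
\item Nothing can be absorbed into ``$|t|$ slack'', since the error is proportional to $m$. The $-\ell_{\gamma_0}$ is already spent on moving the endpoints.
\item The alternative via Construction \ref{con_Psi} also fails. It distorts lengths by a multiplicative constant, destroying the coefficient $1$ in front of $\ell_{\gamma_1}$ that the later iteration along the tree needs.
\item $C$ must depend only on $X$. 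In the application both $\gamma_0$ and $\gamma_1$ range over vertices of $\mathcal{B}_{1,2}$, so ``$\gamma_0$ is fixed'' is not enough. Use instead that $F$ is decreasing and $\ell_{\gamma_0}\ge \operatorname{sys}_{S_{1,2}}$, so $F(\ell_{\gamma_0})\le F(\operatorname{sys}_{S_{1,2}})$, as the paper does.
\end{itemize}
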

    
    \begin{proof}
    Assume that the Dehn--Thurston coordinates have already been chosen. 
    Then we use system \eqref{eq_SR} from Lemma \ref{le_deystvie_T_gamma_0_1} for the case $m > 0$:
    \begin{equation*} 
    \gamma_{(m, t)} = \begin{cases}
        \mathrm{SR}\left(t \gamma_{0}, \frac{m}{2} \gamma_{(2, 0)}\right), & t > 0;\\
        \mathrm{SR}\left(\frac{m}{2} \gamma_{(2, 0)}, |t| \gamma_{0}\right), & t < 0.
    \end{cases}
    \end{equation*}
    From the definition of $\mathrm{SR}$ it follows immediately that
    \begin{equation}\label{eq_alpha}
        \ell_{\gamma_{(m, t)}} \le |t|\ell_{\gamma_{0}} + \frac{m}{2} \ell_{\gamma_{(2, 0)}}. 
    \end{equation}  
    If $m = 0$ or $t = 0$, the inequality also holds and is verified directly.

        The remainder of the proof proceeds as follows.
        We construct a simple closed geodesic 
        $\alpha$ on $\Sigma_{\gamma_1}$ satisfying two conditions:
        $\alpha$ separates the surface $S_{1,2}$ and $i(\alpha, \gamma_0) = 2$. Then one can 
        choose the required coordinate system so that $\alpha = \gamma_{(2, 0)}$. 
        Thus, to complete the proof of the theorem, it remains to construct the geodesic $\alpha$, 
        estimate its length, and substitute the resulting estimate into inequality \eqref{eq_alpha}.

        We now turn to the construction of the geodesic $\alpha$. Let $P$ be one of the two pairs of pants
        corresponding to the decomposition $\mathcal{P} = \{\gamma_0\}$. 
        In the notation of Lemma \ref{le_poluchim_b_le_L_l_alpha}, there exists on $P$ a 
        geodesic segment $h = h_1 \cup h_2$ with endpoints on $\gamma_0$ such that 
        the following estimate holds:
        \[
            \ell_h \le \ell_{\gamma_1} - \ell_{\gamma_0} + F(\ell_{\gamma_0}) + G(0), 
        \]
        where $F(x) > 0$ is strictly decreasing.

        We construct an analogous geodesic segment for the second pair of pants and denote it by $\hat{h}$.
        Now join the endpoints of $h$ and $\hat{h}$ by arcs lying in a small neighbourhood of $\gamma_0$
        so that the resulting closed polygonal curve separates the surface $S_{1,2}$. It is easy to check that 
        it suffices to use arcs whose total length is $\le \ell_{\gamma_0}$.
        Thus, we obtain a polygonal curve whose length is $\le \ell_h + \ell_{\hat{h}} + \ell_{\gamma_0}$.
        Let $\alpha$ be the geodesic representative of the homotopy class of the constructed polygonal curve.
        Hence, the following inequality holds:
        \[\ell_{\alpha} 
            \le 2\ell_{\gamma_1} - \ell_{\gamma_0} + 2\left(F(\operatorname{sys}_{S_{1, 2}}) + G(0)\right),\]
        where $\operatorname{sys}_{S_{1, 2}}$ denotes the length of the shortest simple closed geodesic on $S_{1,2}$.
        To complete the proof, it remains to substitute this inequality into \eqref{eq_alpha}.
    \end{proof}
    
    \begin{remark}
    The statement of the theorem is substantially weakened. One could obtain the inequality
    \[
        \ell_{\gamma_{(m, t)}} \le m \ell_{\gamma_1} + \left( |t| - \frac{m}{2} \right) \ell_{\gamma_0} + C m.
    \]
    However, we have deliberately weakened it to simplify the subsequent computations.
    \end{remark}

    Let $\delta$ denote an oriented loop around a puncture on the surface $S_{1,2}$.
    In the notation of Lemma \ref{th_conbinatorial_lenght_S_0_4}, we formulate a statement 
    that combines the results obtained above. This lemma will be used in the proof of Theorem \ref{th_ocenka_S_1_2}.
    
    \begin{lemma}[Special Dehn--Thurston coordinates]\label{le_specialnye_coordinaty_D_T}
    On the set of integral multicurves lying in $\Sigma_{\gamma_1}$, there exist Dehn--Thurston coordinates $(m,t)$
    associated with the pants decomposition $\mathcal{P} = \{\gamma_0\}$, satisfying the following properties:
        \begin{enumerate}
        \item $(m, t) \in 2\mathbb{N} \times \mathbb{Z} \cup \{0\} \times \mathbb{N}$.
        \item If $\ell_{\gamma_0} \le \ell_{\gamma_1}$, then 
            \begin{align}\label{al_specialnye_metricheskoe_neravenstvo}
                \ell_{\gamma_{(m,t)}} \le m \ell_{\gamma_1} + |t| \ell_{\gamma_0} + m C,
            \end{align}
            where $C$ is a positive constant depending only on the metric $X$ on $S_{1,2}$.
        \item The multicurve $\gamma_{(m,t)}$ consists of $\Delta = \gcd\left(\frac{m}{2}, t\right)$ parallel copies of the simple geodesic $\gamma_{\left( \frac{m}{\Delta}, \frac{t}{\Delta} \right)}$.
        \item If the integral multicurve $\gamma_{(m,t)}$ is a simple geodesic and $t$ is even, then $\gamma_{(m,t)}$ separates $S_{1,2}$.
        \item Let the integral multicurve $\gamma_{(m,t)}$ be a simple geodesic and let $t$ be odd.
            Fix an orientation on the geodesics $\gamma_1$ and $\gamma_{(m,t)}$ so that $\gamma_1$ and $\gamma_{(m,t)}$ belong to the same class in $H_1(S_{1,2}, \mathbb{Z}) / \langle [\delta] \rangle$.
            Then 
        \begin{equation}\label{eq_gomology_formula}
            [\gamma_{(m,t)}] = [\gamma_1] + (-1)^{\frac{m}{2}} \sigma(\gamma_1) [\delta],
        \end{equation}
        where $\sigma(\gamma_1)$ takes values in $\{-1, 1\}$ and changes sign when the orientation of $\gamma_1$ is reversed.
        \end{enumerate}    
    \end{lemma}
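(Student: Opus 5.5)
We assemble the lemma from Lemma \ref{th_conbinatorial_lenght_S_0_4}, Theorem \ref{th_resenie_zadachi_S_0_4}, Lemma \ref{le_o_harakterizacii_multicrivyh}, and a homology computation on $\Sigma_{\gamma_1}\cong S_{0,2}^{2}$. The plan is to take the coordinates $(m,t)$ from Lemma \ref{th_conbinatorial_lenght_S_0_4}, associated with $\mathcal{P}=\{\gamma_0\}$ and with $\gamma_{(2,0)}=\alpha$ separating $S_{1,2}$.

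\textbf{Properties (1)--(3).} Property (1) is the general description of $Z(\mathcal{P})$ for $\mathcal{MC}_0^4$ given after Theorem \ref{th_Den}. By Theorem \ref{th_resenie_zadachi_S_0_4} the remark there extends to $S_{0,2}^2$. Property (2) is exactly part (1) of Lemma \ref{th_conbinatorial_lenght_S_0_4}. Property (3) is Lemma \ref{le_o_harakterizacii_multicrivyh}, or the last sentence of Theorem \ref{th_resenie_zadachi_S_0_4}.

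\textbf{Setup for (4) and (5).} Label the four boundary components of $\Sigma_{\gamma_1}$ as $\alpha_0,\alpha_1,\alpha_2,\alpha_3$. Two of them are the copies $\gamma_1^{\pm}$ of $\gamma_1$, and two are the punctures $\delta,\delta'$. The type $\Theta_1$ curve $\gamma_{(2,0)}$ separates $S_{1,2}$, so it cannot cut off $\{\gamma_1^+,\delta\}$: in that case its complement in $S_{1,2}$ would stay connected through the gluing. So, after relabeling, $\Theta_1$ groups $\{\gamma_1^+,\gamma_1^-\}$ against $\{\delta,\delta'\}$. The other two types group one copy of $\gamma_1$ with one puncture.

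\textbf{Property (4).} Use Theorem \ref{th_resenie_zadachi_S_0_4}: a simple $\gamma_{(m,t)}$ with $t$ even is of type $\Theta_1$. It therefore cobounds with $\gamma_1^+\cup\gamma_1^-$ a pair of pants in $\Sigma_{\gamma_1}$. In $S_{1,2}$, regluing the two copies of $\gamma_1$ gives a one-holed torus on one side and a twice-punctured disc on the other, so $\gamma_{(m,t)}$ separates.

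\textbf{Property (5).} For $t$ odd, Theorem \ref{th_resenie_zadachi_S_0_4} gives type $\Theta_2$ when $m/2$ is even and type $\Theta_3$ when $m/2$ is odd. Such a curve cuts off a pair of pants bounded by itself, one copy of $\gamma_1$, and one puncture. Which copy and which puncture depends on the type: the two types pair $\gamma_1^+$ with different punctures. The homology relation from that pair of pants, with orientations chosen so that $[\gamma_{(m,t)}]\equiv[\gamma_1]$ modulo $[\delta]$, gives $[\gamma_{(m,t)}]=[\gamma_1]\pm[\delta]$ or $[\gamma_1]\pm[\delta']$. We have $[\delta']=-[\delta]$ in $H_1(S_{1,2})$, since $\delta+\delta'$ bounds the complement of a one-holed torus. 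Hence switching type flips the sign, which produces the factor $(-1)^{m/2}$.

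The sign $\sigma(\gamma_1)$ is defined as the sign occurring for type $\Theta_2$. Reversing the orientation of $\gamma_1$ reverses the required orientation of $\gamma_{(m,t)}$, and so negates the $[\delta]$ term. The main obstacle is this bookkeeping: checking carefully that types $\Theta_2$ and $\Theta_3$ really attach the same copy of $\gamma_1$ to different punctures, or equivalently the same puncture to different copies. Either version gives the opposite sign, because the two copies $\gamma_1^{\pm}$ carry opposite boundary orientations. To confirm this I would first check the model curves $\gamma_{(0,1)}=\gamma_0$ and $\gamma_{(2,1)}$ directly.
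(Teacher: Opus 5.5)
Your proposal is correct and follows the paper's proof. The paper likewise takes the coordinates of Lemma~\ref{th_conbinatorial_lenght_S_0_4}, cites Lemma~\ref{le_o_harakterizacii_multicrivyh} for (3), and derives (4)--(5) from Theorem~\ref{th_resenie_zadachi_S_0_4} together with the fact that $\gamma_{(2,0)}$ separates $S_{1,2}$. Your homology bookkeeping with $[\delta']=-[\delta]$ spells out what the paper leaves implicit.

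The step you flag as the main obstacle is in fact automatic. Once $\Theta_1$ is the partition $\{\gamma_1^+,\gamma_1^-\}\mid\{\delta,\delta'\}$, the types $\Theta_2$ and $\Theta_3$ are the two remaining $2{+}2$ partitions. So they necessarily pair $\gamma_1^+$ with different punctures, and no check on the model curves is needed.
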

    \begin{proof}
        The first assertion follows from the construction of Dehn--Thurston coordinates (see Section \ref{RefProject_D-T_coord}).
        Fix on the surface $\Sigma_{\gamma_1}$ the coordinates from Lemma \ref{th_conbinatorial_lenght_S_0_4}.
        Then the second assertion holds, and the geodesic $\gamma_{(2, 0)}$ separates the surface 
        $S_{1, 2}$.
        The third assertion follows from Lemma \ref{le_o_harakterizacii_multicrivyh}.
        The fourth and fifth assertions follow from Theorem \ref{th_resenie_zadachi_S_0_4} and from the fact that 
        the geodesic $\gamma_{(2, 0)}$ separates the surface $S_{1, 2}$.
    \end{proof}

   \subsection{Combinatorial problem}
    Let $A(m, t)$ denote the matrix
    \begin{equation*}  
        \begin{pmatrix}
            m & 1 & 0 \\
            |t| & 0 & 0 \\
            m & 0 & 1
        \end{pmatrix}.
    \end{equation*}

   \begin{theorem}[Solution of the combinatorial problem] \label{th_reshenie_combinatornoy_zadachi}
    Let $\mathcal{Z}_n(L)$ denote the number of tuples $(m_1, t_1, \ldots, m_{2n}, t_{2n})$ such that:
    \begin{enumerate}
        \item $t_i$ are odd integers;
        \item $m_i$ are even positive integers;
        \item For any $i$, $\gcd(m_i / 2, t_i) = 1$;
        \item The inequality $\| A(m_1, t_1) \cdots A(m_{2n}, t_{2n}) \| \le L$ holds,
            where the norm is the operator $L^1$-norm $\|A\| = \max_{1 \le j \le n} \sum_{i=1}^{n} |a_{ij}|$.
        \item For the numbers $m_i / 2$, define residues $q_i \in \mathbb{Z} / 2\mathbb{Z}$ as follows:
            \begin{equation*}
            \begin{pmatrix}
                q_1 \\
                q_2 \\
                q_3 \\
                \vdots \\
                q_{2n}
            \end{pmatrix} \equiv 
            \begin{pmatrix}
                1 & 0 & 0 & \ldots & 0 \\
                1 & 1 & 0 & \ldots & 0 \\
                1 & 1 & 1 & \ldots & 0 \\
                \vdots & \vdots & \vdots & \ddots & \vdots \\
                1 & 1 & 1 & \ldots & 1
            \end{pmatrix}
            \begin{pmatrix}
                \frac{m_1}{2} + 1 \\
                \frac{m_2}{2} + 1 \\
                \frac{m_3}{2} + 1 \\
                \vdots \\
                \frac{m_{2n}}{2} + 1
            \end{pmatrix}
            \pmod{2},
        \end{equation*}
        then among the residues $q_i$ there must be exactly $n$ zeros and $n$ ones.
    \end{enumerate}
            
        Then the following lower bound holds:
        \[
        \sum_{n=1}^{\infty} \mathcal{Z}_n(L) \gg L^{3.011057 \ldots}.
        \]
    \end{theorem}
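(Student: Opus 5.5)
The plan is to bound $\mathcal{Z}_n(L)$ from below for each fixed $n$ by a quantity of the shape $c_0^{\,n} L^{3}(\log L)^{n-1}/(n-1)!$, uniformly in $n$ in a range growing like $\log L$, and then sum over $n$. The point is that $\sum_{n} c_0^{\,n}(\lambda\log L)^{n-1}/(n-1)! \asymp L^{\lambda c_0}$, so a per-step constant $\kappa=\lambda c_0$ in the logarithmic gain produces $L^{3+\kappa}$. The number $\eta=0.011057\ldots$ should come out as the optimal value of this constant for the region I restrict to. So the work splits into a structural estimate of the norm of the product, a counting estimate in a good region, a treatment of the arithmetic conditions (3) and (5), and the final summation.

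\emph{Norm of the product.} Write $A(m,t)=\begin{pmatrix} B & 0\\ * & 1\end{pmatrix}$ up to reordering, where $B=\begin{pmatrix} m&1\\ |t|&0\end{pmatrix}$ is the upper-left block and the third row copies $m$. First I would check the case of two factors directly. The first column of $A(m_1,t_1)A(m_2,t_2)$ is $(m_1m_2+|t_2|,\ |t_1|m_2,\ m_1m_2+m_2)$, so the norm is $\asymp m_1m_2+|t_1|m_2+|t_2|$. Counting then gives
\[
\mathcal{Z}_1(L)\gg \sum_{m_1m_2\le L} m_1\cdot L\asymp L^3 ,
\]
obtained by taking $|t_1|\le m_1$ and $|t_2|\le L$. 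For $2n$ factors I would prove by induction, in the region
\[
\mathcal{R}:\quad m_i\ge 2 M,\qquad |t_i|\le m_{i-1}m_i \ (i\ge 2),\qquad |t_1|\le m_1 ,
\]
that every entry of the product is at most $K^{2n}\bigl(\prod_i m_i+|t_{2n}|\bigr)$, with $K$ an absolute constant. The mechanism is the continuant recursion $p_{k}=m_kp_{k-1}+|t_k|p_{k-2}$, with $p_{k-2}\le p_{k-1}/m_{k-1}$ in $\mathcal{R}$, so each step multiplies by at most a bounded factor times $m_k$. The last $t_{2n}$ is free up to $L$, exactly as in the two-factor case.

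\emph{Counting in $\mathcal{R}$.} The number of admissible $(t_1,\dots,t_{2n-1})$ is $\asymp\prod_{i<2n} m_i\cdot\prod_{2\le i<2n} m_{i-1}$. After summing $t_{2n}$ up to $L$ and imposing $\prod m_i\le L/K^{2n}$, this becomes a weighted multiplicative Dirichlet-divisor sum over the $m_i$. Evaluated by iterated integration (the noncommutative analogue mentioned in the introduction, since the weights depend on neighbours), it gives
\[
L\cdot L^{2}\,\frac{(\log L)^{2n-2}}{(2n-2)!}\cdot c_1^{\,n}
\]
after losing powers of $K$. I expect that I will have to thin this region (for instance by forcing most $m_i$ to be bounded and only every other $m_i$ to range in logarithmically) in order for the exponent of $L$ to stay exactly $3$ while the logs accumulate. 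Choosing which indices are "large" is where the constant $\kappa$ is optimized.

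\emph{Arithmetic conditions and summation.} Condition (1) and the coprimality in (3) cost at most a fixed factor per index, by restricting $t_i$ to a residue class and using the density of coprime pairs in boxes. Condition (5) I would enforce deterministically. Prescribing the parities of all $m_i/2$ so that the partial sums $q_i$ alternate $1,0,1,0,\dots$ gives exactly $n$ zeros, at a cost of $2^{-2n}$. All of these losses are absorbed into $c_0^{\,n}$. Summing over $n\le \lambda\log L$ with the uniform bound, and optimizing the free parameters ($M$, the thinning, $\lambda$), gives $L^{3+\kappa}$ with $\kappa=0.011057\ldots$.

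The main obstacle is the uniformity in $n$. The lower bound must hold simultaneously for $n$ up to a multiple of $\log L$, with the norm-growth constant $K^{2n}$ and the arithmetic losses tracked exactly. Only the resulting balance of $(c\log L)^n/n!$ against $K^{-2n}$ yields a strictly positive $\kappa$. I would first make the induction for the norm as sharp as possible, with $K$ close to $1$ when $M$ is large, since that is what makes the net constant positive.
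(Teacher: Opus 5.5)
Your structural norm estimate is false for $n\ge 2$, and the count you build on it does not follow. Write $v_k$ for the first column of $A(m_1,t_1)\cdots A(m_k,t_k)$. Then $v_k=m_kv_{k-1}+|t_k|v_{k-2}+m_ke_3$, with $v_0=e_1$ and $v_{-1}:=e_2$. The other two columns are $v_{k-1}$ and $e_3$, so the norm of the product is
\[
N_k=\|v_k\|=m_k(N_{k-1}+1)+|t_k|\,N_{k-2},\qquad N_0=N_{-1}=1 .
\]
Hence $t_{2n}$ enters multiplied by $N_{2n-2}\ge\prod_{i\le 2n-2}m_i$, not additively. The phrase ``$t_{2n}$ free up to $L$'' is right only for $n=1$, and no bound $K^{2n}(\prod_i m_i+|t_{2n}|)$ can hold once $t_{2n}$ ranges up to $L$. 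Your own weights expose the problem. Take $\prod_{i<2n}m_i\prod_{2\le i<2n}m_{i-1}$ choices of $(t_1,\dots,t_{2n-1})$, times $L$ choices of $t_{2n}$, summed over $\prod_i m_i\le L$. This is of order $L^4(\log L)^{2n-3}$, not $L^3(\log L)^{2n-2}$, and for $n\ge2$ it is more than the a priori bound $h_{1,2}\ll L^4$ permits. With the correct range $|t_{2n}|\lesssim L/N_{2n-2}$, the count in your region does become of order $L^3(\log L)^{2n-2}$ for fixed $n$, i.e.\ one logarithm per \emph{factor}. That is a genuinely different, and in principle stronger, mechanism than the paper's. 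The paper uses $\|\prod_j A_j\|\le\prod_i\|A_{2i-1}A_{2i}\|$ and so gains only one logarithm per \emph{pair}. Note that with the correct count the exponent of $L$ is already $3$, so the ``thinning'' you contemplate would only discard half the gain.

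Even after this repair, the proposal does not prove the stated exponent, because $0.011057\ldots$ is not intrinsic to the problem. It is the numerical optimum of the paper's particular scheme, whose ingredients are:
\begin{itemize}
\item the uniform bound $\#\{\lambda_k\le L\}\ge r(M)L^3$ for all $L\ge M$ (Lemma~\ref{le_assimptotika_mu_i}), which also gives uniformity in $n$ with no induction;
\item the exact volume of $\{x_i\ge s,\ \prod x_i\le c^nL^3\}$;
\item the factor $\binom{2n}{n}$;
\item Stirling's formula and optimization in $M$ and $\alpha$.
\end{itemize}
Your scheme incurs different losses:
\begin{itemize}
\item $K^{-3}$ per index from the crude norm bound; in your region $N_i/N_{i-1}$ reaches about $2m_i$, so $K\approx 2$ however large $M$ is, not $K\to 1$;
\item $\tfrac12$ per index from fixing one parity pattern;
\item coprimality densities that are bounded below only on average over $m_i$.
\end{itemize}
Its optimum is therefore some other number, which you must actually compute and compare with $0.011057$.

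The parity loss is not harmless. The paper treats condition~(5) by giving every admissible $\mathbf q$ the same lower bound and summing over all $\binom{2n}{n}$ of them. Dropping that factor removes the $4$ inside the logarithm in $h$ of Lemma~\ref{le_o_poiske_max_urovnya}. After that, the paper's own optimization gives at most $\max_{\alpha>0}\frac{\alpha}{\Lambda}\log\frac{3ec\Lambda}{\alpha}=3c<0.0032$, where $\Lambda=\log(s/c)$ and $c=r(M)<1/(96\pi^2)$. A careful per-factor count may well beat $0.011057$, but as written your last step is an assertion, not a computation.
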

    To prove Theorem \ref{th_reshenie_combinatornoy_zadachi}, we formulate and prove a series of lemmas.

    Recall that the Möbius function is the function $\mu: \mathbb{N} \to \mathbb{Z}$ defined by
    \begin{equation*}
        \mu(d) = 
        \begin{cases}
            1 & d = 1, \\
            (-1)^r & d = p_1 \cdots p_r, \\
            0 & p^2 \mid d.
        \end{cases}
    \end{equation*}
    Its fundamental property is the identity
    \begin{equation*}
        \sum_{d \mid n} \mu(d) = 
        \begin{cases}
            1 & \text{if } n = 1, \\
            0 & \text{if } n \neq 1.
        \end{cases}
    \end{equation*}

    We shall need the following standard lemmas.

    \begin{lemma} \label{le_chislo_vzaimnoprostyh}
        Let $t \in \mathbb{N}$. Then
        \[
            \# \{m \in \mathbb{N} \mid \gcd(m, t) = 1, \ m \le x\} = \sum_{d \mid t} \mu(d) \left\lfloor \frac{x}{d} \right\rfloor.
        \]  
    \end{lemma}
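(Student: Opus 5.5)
The plan is to apply Möbius inversion to the indicator of coprimality. By the fundamental identity $\sum_{d\mid n}\mu(d)=[n=1]$, applied to $n=\gcd(m,t)$, the indicator of $\gcd(m,t)=1$ equals $\sum_{d\mid \gcd(m,t)}\mu(d)$. This gives
\[
\#\{m\in\mathbb{N}\mid \gcd(m,t)=1,\ m\le x\}=\sum_{m\le x}\ \sum_{d\mid m,\ d\mid t}\mu(d).
\]
We will then interchange the order of summation, which is legitimate since every sum is finite. The right-hand side becomes $\sum_{d\mid t}\mu(d)\cdot\#\{m\in\mathbb{N}\mid m\le x,\ d\mid m\}$.

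The remaining step is to count the positive multiples of $d$ not exceeding $x$. These are $m=dk$ with $k\in\mathbb{N}$ and $k\le x/d$, so there are exactly $\lfloor x/d\rfloor$ of them. This holds for real $x\ge 0$; for $x<1$ both sides vanish. Substituting yields the stated formula $\sum_{d\mid t}\mu(d)\lfloor x/d\rfloor$.

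There is no real obstacle here. The only point needing care is that the Möbius identity is applied to $\gcd(m,t)$, which is a positive integer because $t\ge 1$. That is why the hypothesis $t\in\mathbb{N}$ is required; for $t=0$ the divisor sum would be infinite.
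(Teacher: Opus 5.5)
Your proof is correct. It is the standard M\"obius-inversion argument the paper intends: the paper states this lemma without proof, right after recalling the identity $\sum_{d\mid n}\mu(d)=[n=1]$, which you apply to $n=\gcd(m,t)$ before swapping the finite sums and counting $\lfloor x/d\rfloor$ multiples of $d$.

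Your closing remark needs two corrections. First, $\gcd(m,t)\ge 1$ holds because $m\ge 1$, not because $t\ge1$; for $t=0$ only $d\le x$ contribute, so the formula still holds. Second, the restriction that is genuinely needed is $x\ge 0$, because for $x<0$ the right-hand side can be negative (e.g.\ $t=1$, $x=-\tfrac12$).
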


    \begin{lemma} \label{le_nechet_zeta} 
        \[
            \sum_{\substack{d -\text{ odd}}} \frac{\mu(d)}{d^2} = \frac{8}{\pi^2}.
        \]
    \end{lemma}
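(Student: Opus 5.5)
The identity $\sum_{d \text{ odd}} \mu(d)/d^2 = 8/\pi^2$ follows from the Euler product for the Möbius series. I will start from the standard fact that $\sum_{d\ge 1}\mu(d)d^{-s} = 1/\zeta(s)$ for $\operatorname{Re} s>1$. At $s=2$ the series converges absolutely, since $|\mu(d)|/d^2 \le 1/d^2$, so we may use the Euler product
\[
\sum_{d\ge1}\frac{\mu(d)}{d^2}=\prod_{p}\Bigl(1-\frac{1}{p^2}\Bigr)=\frac{1}{\zeta(2)}=\frac{6}{\pi^2}.
\]
The factorization is valid because $\mu$ is multiplicative and supported on squarefree integers. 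Hence the Dirichlet series of $\mu$ factors over primes, with local factor $1-p^{-s}$.

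Restricting the sum to odd $d$ amounts to replacing $\mu$ by the multiplicative function $\mu\cdot\mathbf 1_{2\nmid d}$. Its Euler product is the same product with the factor at $p=2$ deleted:
\[
\sum_{d\ \text{odd}}\frac{\mu(d)}{d^2}=\prod_{p>2}\Bigl(1-\frac1{p^2}\Bigr)=\Bigl(1-\frac14\Bigr)^{-1}\frac{6}{\pi^2}=\frac43\cdot\frac{6}{\pi^2}=\frac{8}{\pi^2}.
\]
For a direct justification without the language of Euler products, split each squarefree $d$ as $d$ or $2d'$ with $d'$ odd. Then
\[
\frac{6}{\pi^2}=\sum_{d}\frac{\mu(d)}{d^2}=\sum_{d\ \text{odd}}\frac{\mu(d)}{d^2}+\sum_{d'\ \text{odd}}\frac{\mu(2d')}{4d'^2}=\Bigl(1-\frac14\Bigr)\sum_{d\ \text{odd}}\frac{\mu(d)}{d^2},
\]
using $\mu(2d')=-\mu(d')$ for odd $d'$. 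Rearranging the sums is legitimate by absolute convergence.

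There is no real obstacle here. The only inputs are $\zeta(2)=\pi^2/6$ (Euler) and $\sum\mu(d)d^{-2}=1/\zeta(2)$. The latter follows from the fundamental property of $\mu$ stated above by Dirichlet convolution: $\bigl(\sum\mu(d)d^{-2}\bigr)\bigl(\sum n^{-2}\bigr)=\sum_N N^{-2}\sum_{d\mid N}\mu(d)=1$, again by absolute convergence.
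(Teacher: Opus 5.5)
Your proposal is correct, and your direct justification (splitting off the even squarefree terms as $2d'$ with $d'$ odd, using $\mu(2d')=-\mu(d')$ to get the factor $1-\tfrac14$, then inserting $\sum_d \mu(d)/d^2 = 6/\pi^2$) is exactly the paper's argument. The Euler-product version you give first is just an equivalent repackaging of the same computation.
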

    \begin{proof}
        We have
        \[
            \sum_{d} \frac{\mu(d)}{d^2}
            = \sum_{\substack{d -\text{ odd}}} \frac{\mu(d)}{d^2} + \sum_{\substack{d -\text{ even}}} \frac{\mu(d)}{d^2}
            = \sum_{\substack{d -\text{ odd}}} \frac{\mu(d)}{d^2} + \sum_{\substack{d -\text{ odd}}} \frac{\mu(2d)}{4d^2}
            = \left(1 - \frac{1}{4}\right) \sum_{\substack{d - \text{ odd}}} \frac{\mu(d)}{d^2}.
        \]
        To complete the proof, it suffices to use the identity
        \[
            \sum_{d} \frac{\mu(d)}{d^2} = \frac{6}{\pi^2}.
        \]
    \end{proof}
   In what follows, for sums with non-integer upper limit, we use the convention
    \[
    \sum_{k = 1}^{x} a_k = \sum_{k = 1}^{\lfloor x \rfloor} a_k.
    \]

    \begin{lemma}
        Let $\mathcal{X}_{1}(R, a)$ denote the number of pairs $(n, t)$ satisfying:
        \begin{enumerate} \label{en_usloviya}
            \item $t$ is an odd integer;
            \item $n$ is a positive integer;
            \item $\gcd(n, t) = 1$;
            \item the inequality $a n + |t| \le R$ holds.
        \end{enumerate}
        Then for any $a \ge 1$ and any $R \ge a + 1$, the following estimate holds:
        \begin{equation}\label{eq_neravenstvo_XLa}
            \left| \mathcal{X}_{1}(R, a) - \frac{4(R^2 - a^2)}{a \pi^2} \right|
            \le \frac{1 + 2a - 6a^2}{4a}
            + \frac{3(1 + a)}{2a} R
            + \frac{(a + 1)R - a^2}{2a} \log (R - a).
        \end{equation}
         
    \end{lemma}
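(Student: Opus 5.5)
We count by fixing $t$ and then counting admissible $n$. Since $n\ge1$ and $|t|\ge1$ is odd, the condition $an+|t|\le R$ forces $1\le |t|\le R-a$. For each odd $t$ with $|t|=s$, the admissible $n$ are the positive integers $n\le (R-s)/a$ coprime to $s$. By Lemma \ref{le_chislo_vzaimnoprostyh},
\[
\mathcal{X}_1(R,a)=2\sum_{\substack{s\le R-a\\ s \text{ odd}}}\ \sum_{d\mid s}\mu(d)\left\lfloor \frac{R-s}{ad}\right\rfloor ,
\]
where the factor $2$ accounts for the sign of $t$. We exchange the order of summation. Every divisor $d$ of an odd $s$ is odd, so we write $s=dk$ with $k$ odd, which gives
\[
\mathcal{X}_1(R,a)=2\sum_{\substack{d\le R-a\\ d\text{ odd}}}\mu(d)\sum_{\substack{k\le (R-a)/d\\ k\text{ odd}}}\left\lfloor\frac{R-dk}{ad}\right\rfloor .
\]

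Next we evaluate the inner sum. Replacing $\lfloor y\rfloor$ by $y$ costs at most $1$ per term. There are about $(R-a)/(2d)$ odd $k$ in the range, so this error totals $O\bigl(\sum_d (R-a)/d\bigr)=O(R\log(R-a))$; this is the source of the logarithmic term in \eqref{eq_neravenstvo_XLa}. The main part is
\[
\frac{1}{ad}\sum_{k \text{ odd}}(R-dk).
\]
This is an arithmetic progression and sums in closed form to roughly $\frac{(R-a)^2}{4ad^2}$. More precisely, we get $\frac{R^2-a^2}{4ad^2}$ plus boundary corrections of size $O\bigl((1+a)R/(ad)\bigr)$ coming from the parity and endpoint of the $k$-range. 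Summing these corrections over $d$ again contributes $O\bigl(\tfrac{a+1}{a}R\log(R-a)\bigr)$.

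The main term is then
\[
2\cdot\frac{R^2-a^2}{4a}\sum_{\substack{d\le R-a\\ d\text{ odd}}}\frac{\mu(d)}{d^2}.
\]
We extend the sum to all odd $d$ and apply Lemma \ref{le_nechet_zeta}, which gives $\frac{R^2-a^2}{2a}\cdot\frac{8}{\pi^2}\cdot\frac{1}{2}=\frac{4(R^2-a^2)}{a\pi^2}$. The precise constant here should be checked against the counting of odd $k$, since odd $k$ contribute half the density. The tail $\sum_{d>R-a}d^{-2}\le 1/(R-a)$, multiplied by $R^2/a$, costs $O(R(R+a)/(a(R-a)))$. Because $R\ge a+1$, this is bounded by the linear term $\frac{3(1+a)}{2a}R$ together with the constant term.

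The main obstacle is the explicit bookkeeping that produces exactly the stated constants $\frac{1+2a-6a^2}{4a}$, $\frac{3(1+a)}{2a}$ and $\frac{(a+1)R-a^2}{2a}$. The asymptotic shape follows routinely from the steps above. To get the constants, I would write each floor as $y-\{y\}$ and each odd-count as $\frac{X+1}{2}-\epsilon$ with $0\le\epsilon\le 1$. I would then bound $\sum_{d\le R-a}1/d\le 1+\log(R-a)$ and the tail $\sum_{d>D}1/d^2\le 1/D$, and collect the terms carefully, tracking signs so that the constant and linear terms combine into the stated forms.
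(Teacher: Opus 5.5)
\textbf{Overall.} Your route is the paper's: Lemma~\ref{le_chislo_vzaimnoprostyh}, interchange to a sum over odd $d$ and odd cofactors $k$, replace floors by their arguments, sum the arithmetic progression, and read off the main term from $\sum_{d\ \mathrm{odd}}\mu(d)/d^2=8/\pi^2$. But the lemma is an explicit inequality, so its whole content is the constants. What your argument actually gives is only $\mathcal{X}_1(R,a)=\frac{4(R^2-a^2)}{a\pi^2}+O_a(R\log R)$, and the bookkeeping you outline would not close.

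\textbf{Where the plan fails.} For large $R$ the stated logarithmic coefficient is $\frac{(a+1)R-a^2}{2a}\approx\frac{a+1}{2a}R$. This leaves room for a floor-replacement error of only about $\frac{R-a}{2}\log(R-a)$, which needs the odd-$d$ bound $\sum_{d\ \mathrm{odd}\le R-a}\frac1d\le 1+\frac12\log(R-a)$.
\begin{itemize}
\item With your $\sum_{d\le R-a}\frac1d\le 1+\log(R-a)$, the floor errors $2\sum_d N_d\le\sum_{d\ \mathrm{odd}}\bigl(\frac{R-a}{d}+1\bigr)$ give a bound with leading term $(R-a)\log(R-a)$. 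Here $N_d$ is the number of odd $k\le\frac{R-a}{d}$.
\item That already exceeds the whole log budget once $(a-1)R>a^2$, in particular for $a=8$ as used in Lemma~\ref{le_assimptotika_mu_i}. The excess grows like $R\log R$, so no linear term can absorb it.
\item Your claimed boundary corrections of size $O((1+a)R/(ad))$ would consume even more of the budget. In fact they are $O(1)$ per $d$: since $\sum_k k=N_d^2$, writing $N_d=\frac{R-a+d}{2d}-\epsilon$ with $0\le\epsilon<1$ gives exactly
\[
\sum_{k}\frac{R-dk}{ad}=\frac{RN_d-dN_d^2}{ad}=\frac{R^2-a^2}{4ad^2}+\frac{1-2\epsilon}{2d}-\frac{(1-2\epsilon)^2}{4a}.
\]
\end{itemize}
Separately, your main-term display is wrong as written: $\frac{R^2-a^2}{2a}\cdot\frac{8}{\pi^2}\cdot\frac12=\frac{2(R^2-a^2)}{a\pi^2}$. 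The factor $\frac12$ from odd $k$ is already the $4$ in $\frac{R^2-a^2}{4ad^2}$, so the correct line is $2\cdot\frac{R^2-a^2}{4a}\cdot\frac{8}{\pi^2}$.

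\textbf{How the paper closes the ledger.} It centres the floor, $|\lfloor y\rfloor-y+\frac12|\le\frac12$. This gives, for your inner sum $S_d$ (and it also follows from the exact formula above),
\[
\Bigl|S_d-\frac{R^2-a^2}{4ad^2}+\frac{R-a}{4d}\Bigr|\le\frac{1+2a}{4a}+\frac{(a+2)R-a^2}{4ad}.
\]
It then multiplies by $2\mu(d)$ and sums over odd $d\le R-a$, using $\#\{d\}\le\frac{R-a+1}{2}$, the odd harmonic bound, and the tail bound $\frac{1}{2(R-a)}$. The contributions are:
\begin{itemize}
\item the tail costs $\frac{R+a}{4a}$;
\item the secondary term costs $\frac{R-a}{2}\bigl(1+\frac12\log(R-a)\bigr)$;
\item the error terms cost $\frac{(1+2a)(R-a+1)}{4a}+\frac{(a+2)R-a^2}{2a}\bigl(1+\frac12\log(R-a)\bigr)$.
\end{itemize}
The log coefficients add to $\frac{R-a}{4}+\frac{(a+2)R-a^2}{4a}=\frac{(a+1)R-a^2}{2a}$. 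The remaining terms add to exactly $\frac{3(1+a)}{2a}R+\frac{1+2a-6a^2}{4a}$.

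So the ledger has no slack. Your weaker tail bound $\frac{1}{R-a}$ would overshoot by $\frac{R+a}{4a}$ unless you sharpen something else. Since the constant term $\frac{1+2a-6a^2}{4a}$ is negative, nothing can be absorbed into it either. One further caution if you follow the paper's ledger: its tail bound $\frac{1}{2(R-a)}$ can fail for non-integer $R-a$. At $R-a=2.9$ the odd tail is $1-8/\pi^2\approx0.189>\frac{1}{5.8}$, so that step needs a little care too.
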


    \begin{proof}
    Using Lemma \ref{le_chislo_vzaimnoprostyh}, we obtain
    \[
        \mathcal{X}_{1}(R, a) = 2 \sum_{\substack{t = 1 \\ t -\text{ odd}}}^{R - a} \sum_{d \mid t} 
        \mu(d) \left\lfloor \frac{R - t}{ad} \right\rfloor
        =
        2 \sum_{\substack{d = 1 \\ d -\text{ odd}}}^{R - a} \mu(d) \left( \sum_{\substack{s = 1 
        \\ s -\text{ odd}}}^{\frac{R - a}{d}} \left\lfloor \frac{R - sd}{ad} \right\rfloor \right).
    \]

    Using the inequality $\left| \lfloor x \rfloor - x + \frac{1}{2}  \right| \le \frac{1}{2}$, 
    we obtain the following estimate for the inner sum:
    \[
    \left| \sum_{\substack{s = 1 \\ s - \text{ odd}}}^{\frac{R - a}{d}} \left\lfloor \frac{R - sd}{ad} \right\rfloor - \frac{R^2 - a^2}{4ad^2} + \frac{R - a}{4d} \right|
    \le \frac{1 + 2a}{4a} + \frac{(a + 2)R - a^2}{4ad}.
    \]

    We complete the proof using Lemma \ref{le_nechet_zeta}, the condition $R \ge a + 1$, and the inequalities
    \begin{gather*}
    \left| \sum_{\substack{d = 1 \\ d -\text{ odd}}}^{R - a} \mu(d) \right| \le \frac{R - a + 1}{2}, \\
    \left| \sum_{\substack{d = 1 \\ d -\text{ odd}}}^{R - a} \frac{\mu(d)}{d} \right| \le 1 + \frac{1}{2} \log (R - a), \\
    \left| \sum_{\substack{d > R - a \\ d -\text{ odd}}}^{\infty} \frac{\mu(d)}{d^2} \right| \le \frac{1}{2(R - a)}.
    \end{gather*}
    These inequalities follow immediately from the bound $|\mu(d)| \le 1$.
\end{proof}

In exactly the same manner, the following lemma is derived.

\begin{lemma}
    Let $\mathcal{X}_{2}(R, a)$ denote the number of pairs $(n, t)$ such that:
    \begin{enumerate}
        \item The pair $(n, t)$ satisfies conditions 1--4 of Lemma \ref{en_usloviya}.
        \item $n \equiv 1 \pmod{2}$.
    \end{enumerate}
    Then for any $a \ge 1$ and any $R \ge a + 1$, the following estimate holds:
    \begin{equation}\label{eq_neravenstvo_X_2_La}
        \left| \mathcal{X}_{2}(R, a) - \frac{2(R^2 - a^2)}{a \pi^2} \right|
        \le \frac{3 - 6a}{8}
        + \frac{6a + 5}{8a} R
        + \frac{(a + 1)R - a^2}{4a} \log (R - a).
    \end{equation}
\end{lemma}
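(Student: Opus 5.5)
The plan is to repeat the proof of the estimate \eqref{eq_neravenstvo_XLa} for $\mathcal{X}_{1}(R,a)$, with one change: the Möbius inversion of Lemma \ref{le_chislo_vzaimnoprostyh} is done over odd integers $n$ only.

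First, the reduction. Since $t$ is odd, the pair of conditions ``$n$ odd and $\gcd(n,t)=1$'' is the same as $\gcd(n,2t)=1$. For a fixed odd $t$ with $1\le |t|\le R-a$, put $x=(R-|t|)/a$. I count the odd $n\le x$ coprime to $t$ by inclusion--exclusion over the (odd) divisors $d\mid t$. The odd multiples $n=d\,r$ of $d$ with $n\le x$ are exactly those with $r$ odd and $r\le x/d$. There are $\bigl\lfloor \tfrac{1}{2}\bigl(\tfrac{x}{d}+1\bigr)\bigr\rfloor$ of them. Using the symmetry $t\mapsto -t$, writing $t=sd$ with $s$ odd, and interchanging the order of summation gives
\[
\mathcal{X}_{2}(R,a)=2\sum_{\substack{d=1\\ d-\text{ odd}}}^{R-a}\mu(d)\sum_{\substack{s=1\\ s-\text{ odd}}}^{\frac{R-a}{d}}\left\lfloor \frac12\left(\frac{R-sd}{ad}+1\right)\right\rfloor .
\]

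Second, the inner sum. I use $\bigl|\lfloor y\rfloor-y+\tfrac12\bigr|\le \tfrac12$ with $y=\tfrac12\bigl(\tfrac{R-sd}{ad}+1\bigr)$, so each summand equals $\frac{R-sd}{2ad}$ up to an error of at most $\tfrac12$. The sum of the linear expression $\frac{R-sd}{2ad}$ over odd $s\le M=(R-a)/d$ is computed exactly, up to endpoint effects from $\lfloor M\rfloor$ and parity. Its leading part is $\frac{R^2-a^2}{8ad^2}$, with a correction of order $\frac{R-a}{d}$. The number of terms is at most $\frac{M}{2}+1$. This yields an estimate of the form
\[
\left|\,\text{inner sum}-\frac{R^2-a^2}{8ad^2}\right|\le c_0+\frac{c_1R-c_2a^2}{ad},
\]
with explicit constants $c_0,c_1,c_2$ depending on $a$, in analogy with the corresponding step in the proof for $\mathcal{X}_1$.

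Third, the sum over $d$. Multiplying by $2\mu(d)$ and summing over odd $d\le R-a$, the main term becomes $\frac{R^2-a^2}{4a}\sum_{d\ \text{odd}}\mu(d)d^{-2}$. By Lemma \ref{le_nechet_zeta} this equals $\frac{2(R^2-a^2)}{a\pi^2}$, once the tail $d>R-a$ is added back. The remaining errors are controlled by the three inequalities from the previous proof: $\bigl|\sum_{d\le R-a,\,d\text{ odd}}\mu(d)\bigr|\le \frac{R-a+1}{2}$, the bound $1+\frac12\log(R-a)$ for $\sum\mu(d)/d$, and the bound $\frac{1}{2(R-a)}$ for the tail of $\sum \mu(d)/d^2$. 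The tail contributes $\frac{R^2-a^2}{4a}\cdot\frac{1}{2(R-a)}=\frac{R+a}{8a}$, which is linear in $R$. Combining these with $R\ge a+1$ and $a\ge 1$ gives a bound of the form in \eqref{eq_neravenstvo_X_2_La}.

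The main obstacle is the constant bookkeeping needed to land exactly on the stated right-hand side $\frac{3-6a}{8}+\frac{6a+5}{8a}R+\frac{(a+1)R-a^2}{4a}\log(R-a)$, rather than some other $O(R\log R)$ bound. The qualitative shape (main term, an $O(R)$ term, and an $O(R\log R)$ term) follows from the steps above. To match the constants, I would handle the parity endpoint of the inner sum carefully, writing the number of odd $s\le M$ as $\lfloor (M+1)/2\rfloor$. I would also compare term by term with the corresponding inequality in the proof for $\mathcal{X}_1$: there the factor $2$ from $t\mapsto -t$ was combined with the full floor function, whereas here the extra halving from odd $n$ roughly halves each coefficient.
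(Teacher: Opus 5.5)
Your approach is the same as the paper's. The paper proves this lemma only by saying it is derived ``in exactly the same manner'' as the bound for $\mathcal{X}_1$, and your inversion over odd $d$ with inner count $\bigl\lfloor\tfrac12\bigl(\tfrac{R-sd}{ad}+1\bigr)\bigr\rfloor$ is the correct adaptation.

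What is missing is the constant bookkeeping, which is where the content of the lemma lies. The margin is thin enough that two of your intermediate steps, taken literally, would fall short.

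First, the linear part of the inner sum carries no correction of order $\frac{R-a}{d}$. Put $M=\frac{R-a}{d}$, $K=\lfloor\frac{M+1}{2}\rfloor=\frac{M+1}{2}-\theta$ and $u=1-2\theta\in(-1,1]$. Then, exactly,
\[
\sum_{\substack{s\le M\\ s\ \text{odd}}}\frac{R-sd}{2ad}=\frac{RK}{2ad}-\frac{K^2}{2a}=\frac{R^2-a^2}{8ad^2}+\frac{u}{4d}-\frac{u^2}{8a}.
\]
The $+\tfrac12$ inside the floor absorbs the term $-\frac{R-a}{4d}$ that appears for $\mathcal{X}_1$.

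Second, bound the rounding error by $\frac K2\le\frac{R-a}{4d}+\frac14$, not via the count $\frac M2+1$. Then the inner sum differs from $\frac{R^2-a^2}{8ad^2}$ by at most $\frac{R-a+1}{4d}+\frac14+\frac1{8a}$. Summing over odd $d\le R-a$ with the three auxiliary bounds you list gives a total error of at most
\[
\frac{R-a+1}{4}\log(R-a)+\frac{3(R-a+1)}{4}+\frac{2R+1}{8a}.
\]
The stated right-hand side exceeds this by $\frac{R-a}{4a}\log(R-a)+\frac{3(R-a)-1}{8a}\ge 0$. So you only need to land below it, not on it.

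This margin is $O(1/a)$ when $R-a$ is bounded. Either of your looser steps adds a term of order $R-a$ that does not shrink with $a$. It therefore overshoots already for moderate $a$, e.g.\ $a=4$, $R=a+1$.

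One caveat, inherited from the paper's proof for $\mathcal{X}_1$, concerns the tail bound $\bigl|\sum_{d>R-a,\ d\ \text{odd}}\mu(d)d^{-2}\bigr|\le\frac{1}{2(R-a)}$.
\begin{itemize}
\item For non-integer $R-a$ it does not follow from $|\mu(d)|\le 1$ alone; try $R-a=4.5$.
\item It is false for $R-a\in(2.64,3)$, where the tail equals $\frac{8}{\pi^2}-1\approx-0.189$. For large $a$ the margin above does not absorb this.
\end{itemize}
In that range, however, the only admissible pairs are $(n,t)=(1,\pm1)$, so $\mathcal{X}_2(R,a)=2$. The main term lies in $(1,3.1)$, and the right-hand side exceeds $\frac34(R-a)+1>2.9$. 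The lemma therefore holds there by direct inspection.
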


Fix two residues $\rho_1, \rho_2 \in \mathbb{Z} / 2\mathbb{Z}$.
\begin{lemma} \label{le_assimptotika_mu_i}
    Let $\lambda_k = \lambda_k(\rho_1, \rho_2)$ be the integers of the form $m_2(2m_1 + |t_1| + 1) + |t_2|$, arranged in non-decreasing order with multiplicities, where
    \begin{enumerate}
        \item $t_1, t_2$ are odd integers;
        \item $m_1, m_2$ are even positive integers;
        \item $\gcd(m_1 / 2, t_1) = \gcd(m_2 / 2, t_2) = 1$;
        \item $\frac{m_1}{2} \equiv \rho_1 \pmod 2$ and $\frac{m_2}{2} \equiv \rho_2 \pmod 2$.
    \end{enumerate}
    Then for any $M \ge 1000$ and any $k \ge M^3 r(M)$, the following inequality holds
    \[
        \lambda_k \le \sqrt[3]{\frac{k}{r(M)}},
    \]
    where
    \begin{equation} \label{eq_c_L}
        r(M) = \frac{1}{M^3} \left( \frac{(M - 5)^3 - 36^3}{96 \pi^2}
        - \frac{9}{128} \left( \log\left( \frac{M - 5}{4} \right) + 3 \right)
        \left( (M - 3)^2 - 36^2 \right) \right).
    \end{equation}

\end{lemma}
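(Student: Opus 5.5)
The plan is to replace the statement about the ordered sequence $\lambda_k$ by a lower bound on the counting function
\[
N(\Lambda)=\#\{k \mid \lambda_k\le \Lambda\}.
\]
Suppose we can show that $N(\Lambda)\ge r(M)\Lambda^3$ for every $\Lambda\ge M$. Given $k\ge M^3 r(M)$, put $\Lambda=\sqrt[3]{k/r(M)}$. Then $\Lambda\ge M$, so $N(\Lambda)\ge k$, and since the $\lambda_k$ are arranged in non-decreasing order this gives $\lambda_k\le\Lambda$. So everything reduces to the cubic lower bound for $N(\Lambda)$, with the explicit constant $r(M)$ from \eqref{eq_c_L}.

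To count, write $n_i=m_i/2$, so that $\lambda=2n_2(2n_1+|t_1|+1)+|t_2|$ (the factor in front of $n_2$ uses $m_1=2n_1$). I will first fix the outer pair $(n_1,t_1)$, with $n_1\equiv\rho_1$, $t_1$ odd and $\gcd(n_1,t_1)=1$. Setting $a=2(2n_1+|t_1|+1)\ge 8$, the inner count is the number of pairs $(n_2,t_2)$ with $a n_2+|t_2|\le\Lambda$, $t_2$ odd, $\gcd(n_2,t_2)=1$ and $n_2\equiv\rho_2$. This equals $\mathcal X_2(\Lambda,a)$ when $\rho_2=1$, and $\mathcal X_1(\Lambda,a)-\mathcal X_2(\Lambda,a)$ when $\rho_2=0$. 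In both cases \eqref{eq_neravenstvo_XLa} and \eqref{eq_neravenstvo_X_2_La} give a main term $\frac{2(\Lambda^2-a^2)}{a\pi^2}$, valid when $\Lambda\ge a+1$. The error is of order $\Lambda+\frac{\Lambda}{a}\log\Lambda$ plus a bounded term.

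I then sum over the outer pairs with $a\le\Lambda-1$, keeping the main term and subtracting the errors. For the main term the key observation is that the number of admissible $(n_1,t_1)$ with $a\approx s$ grows linearly in $s$. To make this quantitative I will apply the same counting again: the number of outer pairs with $2n_1+|t_1|\le R$ and $n_1\equiv\rho_1$ is again $\mathcal X_2$ or $\mathcal X_1-\mathcal X_2$ with parameter $1$ (since $R\ge 2$), so it is $\frac{R^2}{\pi^2}+O(R\log R)$ in either parity. Abel summation against the decreasing weight $(\Lambda^2-a^2)/a$ then turns the main term into
\[
\int \frac{\Lambda^2-s^2}{s}\,d\!\left(\frac{s^2}{16\pi^2}\right)\asymp \frac{\Lambda^3}{\pi^4},
\]
which is a genuine $\Lambda^3$ term. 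The leftover error is $O(\Lambda^2\log\Lambda)$, and this is exactly the shape of $r(M)$: a cubic $(M-5)^3/(96\pi^2)$ minus a $\log(M)\cdot M^2$ correction. The shifts $M-5$, $M-3$ and the constant $36$ will come from cutting the summation range a few units below $\Lambda$, so that $R\ge a+1$ holds and the floors are absorbed.

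The main obstacle is matching the explicit constants. If the double Abel summation is too wasteful, I would first try a cruder lower bound for the outer count: restrict to $|t_1|=1$ (automatically coprime to $n_1$) and $n_1\equiv\rho_1$, which still gives about $s/8$ outer pairs with $a\le s$, and then compare with the stated $r(M)$. I expect the factor $\frac1{96\pi^2}$ to come from a bound of this kind, in which only the inner count carries the $\pi^2$. The only remaining bookkeeping is verifying, at the end, that $M\ge1000$ makes every error inequality applicable and keeps $r(M)>0$.
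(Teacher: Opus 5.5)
The first step of your proposal is correct and matches the paper: reducing to $N(\Lambda)\ge r(M)\Lambda^3$ for $\Lambda\ge M$. The paper proves $N(L)\ge r(L)L^3$ for $L\ge 41$ and uses that $r$ is increasing on $[1000,\infty)$. The gap is in the counting, and it comes from the order of summation. There is also a small slip: $2m_1=4n_1$, so $a=2(4n_1+|t_1|+1)$.

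You fix the outer pair and sum the inner estimates over all $\asymp\Lambda^2$ outer pairs with $a\le\Lambda-1$, but the per-pair error is too large for that. Since $(a+1)R-a^2=a(R-a)+R$, the bound \eqref{eq_neravenstvo_XLa} contains the term $\frac{R-a}{2}\log(R-a)$, and \eqref{eq_neravenstvo_X_2_La} has the analogous term. This is $\asymp\Lambda\log\Lambda$ for every $a\le\Lambda/2$, not $O(\Lambda+\frac{\Lambda}{a}\log\Lambda)$. Summed over the outer pairs it gives an error of order $\Lambda^3\log\Lambda$, which swamps your $\Lambda^3$ main term. Even your own per-pair bound would give a total error $\asymp\Lambda^3$ (the $O(\Lambda)$ part times $\asymp\Lambda^2$ pairs), not $O(\Lambda^2\log\Lambda)$.

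This is not just crude bookkeeping. For $\rho_2=0$ and $a>(\Lambda-1)/2$, no even $n_2\ge 2$ fits, so the inner count is $0$. Yet the main term $\frac{2(\Lambda^2-a^2)}{a\pi^2}$ is of order $\Lambda-a$ there, and these outer pairs alone make the summed main term overshoot by a constant times $\Lambda^3$.

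Your fallback fails for a different reason. With $|t_1|=1$ the outer family is one-parameter, and $\sum_a\frac{\Lambda^2-a^2}{a}$ over an arithmetic progression of $a$'s is only $\asymp\Lambda^2\log\Lambda$. So no cubic bound, let alone the constant $\frac{1}{96\pi^2}$, can come from it.

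The missing idea is to freeze the multiplier $m_2$ instead of the outer pair. Take $m_2=4$ when $\rho_2=0$; then $\gcd(2,t_2)=1$ for every odd $t_2$. Take $m_2=2$ when $\rho_2=1$; this only enlarges the admissible range, so the bound for $m_2=4$ still applies. For each odd $t_2$, the condition $\lambda\le\Lambda$ becomes $2m_1+|t_1|\le R:=\frac{\Lambda-|t_2|}{4}-1$. The number of admissible $(m_1,t_1)$ is $\mathcal X_1(R,8)$ if $\rho_1=0$ (writing $m_1/2=2n$), or $\mathcal X_2(R,4)$ if $\rho_1=1$.

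Now the parameter $a\le 8$ is fixed. The main term is $\approx\frac{R^2}{2\pi^2}$, the error is $O(R\log R)$, and both are summed only over the $\asymp\Lambda$ values of $t_2$. This gives $N(\Lambda)\ge\sum_{t_2}G\bigl(\frac{\Lambda-t_2}{4}-1\bigr)$ with $G(x)=\frac{x^2}{\pi^2}-\frac{9}{8}x(\log x+3)$. That is a main term $\frac{\Lambda^3}{96\pi^2}$ and an $O(\Lambda^2\log\Lambda)$ correction. The cutoff $|t_2|\le\Lambda-40$, needed for $R\ge a+1$, produces the shifts $M-5$, $M-3$ and the constant $36$ in $r(M)$.

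In short, the cube comes from two free parameters $(m_1,t_1)$ under a linear constraint with bounded coefficients, times the linear freedom in $t_2$. Errors are never summed over a two-parameter family.
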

    \begin{proof} 
    We estimate from below the function $\#\{k \in \mathbb{N} \mid \lambda_k \le L\}$.

    Consider the case $\rho_1 = 0$ and $\rho_2 = 0$. We count only those $\lambda_k$ 
    for which $m_2 = 4$. In this case, the condition $\gcd(m_2 / 2, t_2) = 1$
    holds for any odd $t_2$, and the inequality
    $\lambda_k \le L$ becomes
    \[
        8n + |t_1| \le \frac{L - |t_2|}{4} - 1.
    \]
    Hence, for a fixed $t_2$, the number of pairs $(m_1, t_1)$ such that $\lambda_k \le L$
    is equal to $\mathcal{X}_{1}\left(\frac{L - |t_2|}{4} - 1, \, 8\right)$. 
    Thus, the following estimate holds:
    \begin{equation} \label{eq_lambda_k_le}
        \#\{k \in \mathbb{N} \mid \lambda_k \le L\}
        \ge 2 \sum_{\substack{t_2 = 1 \\ t_2- \text{ odd}}}^{L}
        \mathcal{X}_{1}\left(\frac{L - t_2}{4} - 1, \, 8 \right).
    \end{equation}

    In the case $\rho_1 = 0$ and $\rho_2 = 1$, the analogous estimate, obtained by counting only those 
    $\lambda_k$ for which $m_2 = 2$, is stronger than \eqref{eq_lambda_k_le}.
    Therefore, estimate \eqref{eq_lambda_k_le} remains valid.

    The remaining two cases are handled similarly to the first two,
    with the only difference that the function $\mathcal{X}_{1}(A, 8)$ is replaced by 
    $\mathcal{X}_{2}(A, 4)$.
    Thus, in the case $\rho_1 = 1$, the following estimate holds:
    \begin{equation}\label{eq_lambda_k_le_2}
        \#\{k \in \mathbb{N} \mid \lambda_k \le L\}
        \ge 2 \sum_{\substack{t_2 = 1 \\ t_2 -\text{ odd}}}^{L}
        \mathcal{X}_{2}\left(\frac{L - t_2}{4} - 1, \, 4 \right).
    \end{equation}

        Applying inequalities \eqref{eq_neravenstvo_XLa} and \eqref{eq_neravenstvo_X_2_La} 
        to the estimates \eqref{eq_lambda_k_le} and \eqref{eq_lambda_k_le_2}, respectively,
        we obtain the following. In our case, $a = 4$ or $8$, so $a \le 8$, and the condition 
        $R \ge a + 1$ becomes $|t_2| \le L - 40$. Hence, for $L \ge 41$, we obtain the following 
        estimate, independent of $\rho_1$ and $\rho_2$:

        \begin{equation}\label{eq_eq}
            \# \{k \in \mathbb{N} \mid \lambda_k \le L\} \ge \sum_{\substack{t_2 = 1 \\ t_2 -\text{ odd}}}^{L-40} G\left(\frac{L-t_2}{4} - 1\right),
        \end{equation}
        where
        \[
            G(x) = \frac{x^2}{\pi^2} - \frac{9x(\log x + 3)}{8}.
        \]

        Using the estimates
        \[
            \int_{1}^{\frac{M+1}{2}} (N - 2x + 1)^k \, dx
            \le \sum_{\substack{t = 1 \\ t -\text{ odd}}}^{M} (N - t)^k
            \le \int_{0}^{\frac{M+1}{2}} (N - 2x + 1)^k \, dx
        \]
        and
        \[
            \sum_{\substack{t = 1 \\ t- \text{odd}}}^{M} (N - t) \log (N - t)
            \le \log(N) \sum_{\substack{t = 1 \\ t -\text{ odd}}}^{M} (N - t),
        \]
        we obtain that for any $L \ge 41$
        \[
            \#\{k \in \mathbb{N} \mid \lambda_k \le L\} \ge r(L) L^3,
        \]
        where the function $r(L)$ is defined by \eqref{eq_c_L}.

        It is clear that
        \[
            r(L) = \frac{1}{96 \pi^2} \left(1 + O\left(\frac{\log L}{L}\right)\right).
        \]
        Moreover, it is easy to check that for $L \ge 1000$, the function $r(L)$ is positive and strictly increasing.
        Hence, for any fixed $M \ge 1000$ and any $L \ge M$, we have
        \[
            \#\{k \in \mathbb{N} \mid \lambda_k \le L\} \ge r(M) L^3.
        \]
        We complete the proof by substituting $L = \sqrt[3]{\frac{n}{r(M)}}$ into this estimate.
        \end{proof}

    \begin{lemma}[On finding the maximal level] \label{le_o_poiske_max_urovnya}
    Let $0 < c < 1$ and $1 \le s$. Then for any $0 < \alpha < 3$, the following estimate holds:
    \begin{equation} \label{eq_max_uroven}
        \frac{(2n)!}{(n!)^3} \left(
        c \log \left(\left(\frac{c}{s}\right)^{n+1} L^3\right)
        \right)^{n}
        \gg \frac{L^h}{\log L},
    \end{equation}
    where
    \begin{equation}\label{eq_n_lnL}
        n = \left\lfloor \alpha \frac{\log L}{\log \frac{s}{c}} \right\rfloor - 1.
    \end{equation}
    
    $h = \frac{\alpha}{\log \frac{s}{c}} \log \left(\frac{4ec\log (\frac{s}{c})(3-\alpha) }{\alpha}\right)$.

    \end{lemma}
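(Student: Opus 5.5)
Since $n \to \infty$ with $L$ through \eqref{eq_n_lnL}, the estimate is pure asymptotics. I will use Stirling's formula for the factorials and a first-order expansion of the logarithmic factor, then read off the exponent of $L$. Write $\beta = \log\frac{s}{c} > 0$. Then $n = \alpha\frac{\log L}{\beta} + O(1)$, and in particular $n \asymp \log L$.

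\textbf{Step 1 (factorials).} By Stirling, $(2n)! = \sqrt{4\pi n}\,(2n/e)^{2n}(1+o(1))$ and $(n!)^3 = (2\pi n)^{3/2}(n/e)^{3n}(1+o(1))$. Hence
\[
\frac{(2n)!}{(n!)^3} \asymp \frac{1}{n}\left(\frac{4e}{n}\right)^{n}.
\]

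\textbf{Step 2 (logarithmic factor).} Expand the logarithm:
\[
\log\left(\left(\tfrac{c}{s}\right)^{n+1}L^3\right) = 3\log L - (n+1)\beta .
\]
The floor in \eqref{eq_n_lnL} gives $(n+1)\beta \le \alpha\log L$, so this expression is at least $(3-\alpha)\log L$. The hypothesis $\alpha<3$ is exactly what keeps it positive, so the logarithm is well defined and of order $\log L$. It also gives a clean lower bound without tracking the fractional part.

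Combining the two steps, the left-hand side of \eqref{eq_max_uroven} is
\[
\gg \frac{1}{n}\left(\frac{4ec(3-\alpha)\log L}{n}\right)^{n}.
\]
Using $n \le \alpha\log L/\beta$, we get $\frac{\log L}{n} \ge \frac{\beta}{\alpha}$. The bracket is therefore at least $K := \frac{4ec\beta(3-\alpha)}{\alpha}$, and the left-hand side is $\gg K^{n}/n$.

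\textbf{Step 3 (exponent).} We have $K^n = \exp(n\log K)$ with $n = \frac{\alpha\log L}{\beta} - \theta$ for some $\theta\in(0,2]$. So
\[
K^n = L^{\frac{\alpha}{\beta}\log K}\, K^{-\theta} \asymp L^{h},
\]
with $h$ exactly as defined in the statement. Combined with $\frac1n \asymp \frac{1}{\log L}$, this gives $\frac{L^h}{\log L}$.

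The one point needing care is the sign of $\log K$. If $K\ge1$, the bound $\frac{\log L}{n}\ge\frac{\beta}{\alpha}$ is used in the correct direction and the $O(1)$ error in $n$ only changes constants. If $K<1$, then $h<0$ and lowering the base still only gives $\ge K^n$, so the argument is unchanged; the bounded error $\theta$ again affects only constants. This is where I expect the main (mild) obstacle. We must make sure no lost factor grows with $L$; the only candidate is $\left(\frac{\log L}{n}\cdot\frac{\alpha}{\beta}\right)^{n}$, and this is $\ge 1$, so nothing is lost.

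Finally, I will check that the implied constants depend only on $c$, $s$ and $\alpha$, and that $L$ large ensures $n\ge1$. Then Stirling applies and the estimate holds in the $\gg$ sense.
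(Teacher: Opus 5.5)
Your proposal is correct and follows essentially the same route as the paper: Stirling's formula for $(2n)!/(n!)^3$, the bound $\log\bigl((c/s)^{n+1}L^3\bigr)\ge(3-\alpha)\log L$ coming from the floor in \eqref{eq_n_lnL}, then $\log L/n\ge \log(s/c)/\alpha$, and finally $K^n\asymp L^h$ because $n$ differs from $\alpha\log L/\log(s/c)$ by a bounded amount. Your extra bookkeeping about the sign of $\log K$ and the bounded offset $\theta$ only spells out what the paper compresses into ``applying \eqref{eq_n_lnL} twice.''
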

    \begin{proof}
    Denote the left-hand side of \eqref{eq_max_uroven} by $X$.
    From \eqref{eq_n_lnL} it follows that
    \[
        \left(\frac{c}{s}\right)^{n+1} L^3 \ge L^{3 - \alpha}.
    \]

    Using Stirling's formula, we obtain the asymptotic expansion
    \[
        \frac{(2n)!}{(n!)^3} \sim \frac{(4e)^n}{\pi \sqrt{2} \, n^{n+1}}.
    \]

    Then
    \[
        X = \frac{(2n)!}{(n!)^3} \left(c \log \left(\left(\frac{c}{s}\right)^{n+1} L^3\right)\right)^n
        \gg
        \frac{\left(4ec(3 - \alpha) \log L\right)^n}{n^{n+1}}
        \gg \frac{1}{\log L} \left( \frac{4ec(3 - \alpha) \log L}{n} \right)^n.
    \]
    Applying \eqref{eq_n_lnL} twice, we obtain
    \[
        X \gg \frac{1}{\log L} \left( \frac{4ec \log \left( \frac{s}{c} \right) (3 - \alpha)}{\alpha} \right)^n
        \gg
        \frac{L^h}{\log L},
    \]
    where
    \[
        h = \frac{\alpha}{\log \left( \frac{s}{c} \right)} \log \left( \frac{4ec \log \left( \frac{s}{c} \right) (3 - \alpha)}{\alpha} \right).
    \]
    \end{proof}

 \begin{proof}[Proof of Theorem \ref{th_reshenie_combinatornoy_zadachi}.]
    Let $\mathcal{Q}$ denote the subset of $(\mathbb{Z}/2\mathbb{Z})^{n}$
    consisting of all $\mathbf{q} = (q_1, \ldots, q_n)$ such that among the $q_i \in \mathbb{Z}/2\mathbb{Z}$
    there are exactly $n$ ones and $n$ zeros.
    Denote by $\mathcal{Z}_{n, \mathbf{q}}(L)$ the number of tuples counted by $\mathcal{Z}_n(L)$
    with the additional restriction that the residue tuple $(q_1, \ldots, q_n)$ is equal to $\mathbf{q}$. Then $\mathcal{Z}_n(L)$ decomposes as the sum
    \[
        \mathcal{Z}_n(L) = \sum_{\mathbf{q} \in \mathcal{Q}} \mathcal{Z}_{n, \mathbf{q}}(L).
    \]
    Define the column of residues $\mathbf{r} = (r_1, \ldots, r_n)$ by
                \begin{equation*}
                \begin{pmatrix}
                    &q_1&\\
                    &q_2&\\
                    &q_3&\\
                    & \vdots &\\
                    &q_{2n}&\\
                \end{pmatrix} \equiv 
                \begin{pmatrix}
                    &1 & 0 & 0 & \ldots & 0\\
                    &1 & 1 & 0 & \ldots & 0\\
                    &1 & 1 & 1 & \ldots & 0\\
                    & \cdots & \cdots & \cdots & \ddots  & 0\\
                    &1 & 1 & 1 & \ldots & 1\\
                \end{pmatrix}
                \begin{pmatrix}
                    &r_1 + 1&\\
                    &r_2 + 1&\\
                    &r_3 + 1&\\
                    & \vdots &\\
                    &r_{2n} + 1&\\
                \end{pmatrix}
                \mod 2.
            \end{equation*} 
    Then $\mathcal{Z}_{n, \mathbf{q}}(L)$ counts the number of tuples $(m_1, t_1, \ldots, m_{2n}, t_{2n})$ such that
    \[
        \frac{m_j}{2} \equiv r_j \pmod 2.
    \]
    We estimate $\mathcal{Z}_{n, \mathbf{q}}(L)$ for a fixed $\mathbf{q}$.
    Note that the following inequality holds:
    \begin{gather} \label{ga_neravenstvo_norma}
        \left\| \prod_{j=1}^{2n} A(m_j, t_j) \right\|
        \le \prod_{i=1}^{n} \| A(m_{2i-1}, t_{2i-1}) A(m_{2i}, t_{2i}) \|.
    \end{gather}
    It is easy to see that
    \[
        \| A(m_{2i-1}, t_{2i-1}) A(m_{2i}, t_{2i}) \|
        = m_{2i}(2m_{2i-1} + |t_{2i-1}| + 1) + |t_{2i}|.
    \]
    Define the numbers $\lambda_k(\rho_1, \rho_2)$ as in Lemma \ref{le_assimptotika_mu_i}.
    From the definition of the sequence $\lambda_k(\rho_1, \rho_2)$ and inequality \eqref{ga_neravenstvo_norma}, it follows that
    \[
        \mathcal{Z}_{n, \mathbf{q}}(L) \ge \#\{(k_1, \ldots, k_n) \mid \lambda_{k_1} \cdots \lambda_{k_n} \le L\}.
    \]

    From Lemma \ref{le_assimptotika_mu_i}, for any $M \ge 1000$, we obtain the estimate
    \begin{gather}\label{eq_Z_n_q_estimate}
        \mathcal{Z}_{n, \mathbf{q}}(L) \ge 
        \left\{ (k_1, \ldots, k_n) \in \mathbb{N}^n \;\middle|\; k_i \ge s, \;
        \sqrt[3]{\frac{k_1}{c}} \cdots \sqrt[3]{\frac{k_n}{c}} \le L \right\},
    \end{gather}
    where $c = r(M)$, $s = M^3 r(M)$, and the function $r(M)$ is defined by \eqref{eq_c_L}.

    Since the right-hand side is independent of $\mathbf{q}$, we obtain the estimate
    \[
        \mathcal{Z}_n(L) \ge \binom{2n}{n}
        \left\{ (k_1, \ldots, k_n) \in \mathbb{N}^n \;\middle|\; k_i \ge s, \;
        \sqrt[3]{\frac{k_1}{c}} \cdots \sqrt[3]{\frac{k_n}{c}} \le L \right\}.
    \]

    We now estimate the counting function
    \[
        \#\{(k_1, \ldots, k_n) \in \mathbb{N}^n \mid k_i \ge s, \; k_1 \cdots k_n \le c^n L^3\}
        \ge
        \int\limits_{\substack{x_1 \cdots x_n \le c^n L^3 \\ s \le x_i}} dx_1 \cdots dx_n.
    \]

    Using standard methods of multidimensional integration, one obtains the following formula:
    \[
        \int\limits_{\substack{x_1 \cdots x_n \le A \\ s \le x_i}} dx_1 \cdots dx_n
        = \frac{s^n}{(n-1)!} \int_{1}^{A / s^n} (\log y)^{n-1} \, dy.
    \]
    Thus, we obtain the estimate
    \[
        \sum_{n=1}^{\infty} \mathcal{Z}_n(L) \ge \sum_{n=1}^{\infty} Q_n,
    \]
    where
    \[
        Q_n = \frac{(2n)! \, s^n}{(n!)^2 (n-1)!} \int_{1}^{\left(\frac{c}{s}\right)^n L^3} (\log y)^{n-1} \, dy.
    \]
    Using integration by parts, one easily derives the identity
    \[
        \frac{1}{(n-1)!} \int_{1}^{A} (\log y)^{n-1} \, dy
        + \frac{1}{n!} \int_{1}^{A} (\log y)^n \, dy
        = \frac{A (\log A)^n}{n!}.
    \]
    Using this identity, one readily obtains the estimate
    \[
        Q_n + Q_{n+1} \ge
        \frac{(2n)!}{(n!)^3} \left(
        \frac{c^{n+1}}{s} L^3 \log^n \left( \left( \frac{c}{s} \right)^{n+1} L^3 \right)
        \right).
    \]
    From Lemma \ref{le_o_poiske_max_urovnya}, for any $0 < \alpha < 3$, it follows that
    \[
        \sum_{n=1}^{\infty} \mathcal{Z}_n(L) \gg \frac{L^{3 + h}}{\log L},
    \]
    where
    \[
        h = \frac{\alpha}{\log \left( \frac{s}{c} \right)}
        \log \left( \frac{4ec \log \left( \frac{s}{c} \right) (3 - \alpha)}{\alpha} \right).
    \]
    Numerical computations show that the maximum of $h$ is attained approximately 
    at \[M = e^{11.2039} \text{ and } \alpha = 0.330679.\]
    Substituting these precise values of $M$ and $\alpha$ yields 
    \[h = 0.011057\ldots\]
    Therefore, the estimate with this $h$ is certainly valid.

    \end{proof}

    \begin{lemma}\label{le_neravenstvo_kompleks_cyklov}
        Consider a set of positive real numbers $\ell_0$ and $\ell_{(m_1, t_1, \ldots, m_n, t_n)}$, where $n \in \mathbb{N}$, $m_i \ge 0$, and $t_i \in \mathbb{R}$,
        satisfying the inequalities
        \begin{align}
            \ell_{(m_1, t_1, m_2, t_2)} &\le m_2 \ell_{(m_1, t_1)} + |t_2| \ell_0 + m_2 C, \label{al_neravenstvo_kompleks_cyklov} \\
            \ell_{(m_1, t_1, \ldots, m_n, t_n)} &\le m_n \ell_{(m_1, t_1, \ldots, m_{n-1}, t_{n-1})}
            + |t_n| \ell_{(m_1, t_1, \ldots, m_{n-2}, t_{n-2})}
            + m_n C \quad \text{for } n \ge 3, \label{al_neravenstvo_kompleks_cyklov_1}
        \end{align}
        where the constant $C > 0$ is independent of $n$.

        Then for any $n \ge 2$,
        \begin{gather*}
            \ell_{(m_1, t_1, \ldots, m_n, t_n)} \le
            P_n \cdot \ell_{(m_1, t_1)}
            + Q_n \cdot \ell_0
            + R_n \cdot C,
        \end{gather*}
        where $P_n, Q_n, R_n$ are defined by
        \begin{equation} \label{eq_A_1_A_2_A_n}
            \begin{pmatrix}
                P_n \\
                Q_n \\
                R_n
            \end{pmatrix}
            =
            A(m_2, t_2)
            \cdots
            A(m_n, t_n)
            \begin{pmatrix}
                1 \\
                0 \\
                0
            \end{pmatrix}.
        \end{equation}

    \end{lemma}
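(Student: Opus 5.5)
The argument is an induction that packages the two-step recurrence as a first-order recurrence for row vectors. To shorten notation, write $\ell_k = \ell_{(m_1, t_1, \ldots, m_k, t_k)}$ for $k \ge 1$; in particular $\ell_1 = \ell_{(m_1,t_1)}$, and $\ell_0$ is the given number. For $k \ge 1$ set
\[
u_k = \bigl(\ell_k,\ \ell_{k-1},\ C\bigr),
\]
viewed as a row vector with positive entries. For row vectors we write $u \le v$ to mean entrywise inequality.

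The key observation is how a row vector multiplies $A(m,t)$:
\[
(a, b, c)\, A(m,t) = \bigl(ma + |t| b + mc,\ a,\ c\bigr).
\]
Hence inequality \eqref{al_neravenstvo_kompleks_cyklov} (for $k = 2$) and inequality \eqref{al_neravenstvo_kompleks_cyklov_1} (for $k \ge 3$) say exactly that the first coordinate of $u_k$ is at most the first coordinate of $u_{k-1} A(m_k, t_k)$. The second and third coordinates of $u_{k-1} A(m_k, t_k)$ are $\ell_{k-1}$ and $C$, which equal those of $u_k$. Therefore
\[
u_k \le u_{k-1}\, A(m_k, t_k) \qquad \text{for all } k \ge 2 .
\]

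All entries of every $A(m,t)$ are nonnegative, because $m_i \ge 0$ and $|t_i| \ge 0$. So right multiplication by $A(m,t)$ preserves entrywise inequalities between row vectors: if $u \le v$, then $uA \le vA$. Iterating the previous display by induction on $n$ gives
\[
u_n \le u_1\, A(m_2,t_2) \cdots A(m_n,t_n) \qquad \text{for every } n \ge 2 .
\]
The base case $n = 2$ is just the case $k = 2$ above, i.e. \eqref{al_neravenstvo_kompleks_cyklov}.

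Finally, take the first coordinate, i.e. multiply both sides on the right by the column $(1,0,0)^{T}$. This yields
\[
\ell_n \le \bigl(\ell_1, \ell_0, C\bigr) \begin{pmatrix} P_n \\ Q_n \\ R_n \end{pmatrix} = P_n \ell_{(m_1,t_1)} + Q_n \ell_0 + R_n C,
\]
where $P_n, Q_n, R_n$ are given by \eqref{eq_A_1_A_2_A_n}. This is the claimed estimate.

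The only real point is recognising that the products in \eqref{eq_A_1_A_2_A_n} accumulate on the right. This forces the induction to be written for row vectors $u_k$ multiplied from the right. A naive induction on the column vector $(P_n, Q_n, R_n)^T$ would instead require peeling matrices off on the left, which does not match the recurrence.
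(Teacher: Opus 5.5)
Your proof is correct, but it is organized differently from the paper's. The paper first derives the second-order recurrence
\[
(P_{n+1},Q_{n+1},R_{n+1})^{T} = m_{n+1}(P_n,Q_n,R_n)^{T} + |t_{n+1}|(P_{n-1},Q_{n-1},R_{n-1})^{T} + m_{n+1}(0,0,1)^{T}
\]
for the coefficient columns. It then runs a two-step induction on the scalar inequalities, substituting the bounds for $\ell_{n-1}$ and $\ell_{n-2}$ into \eqref{al_neravenstvo_kompleks_cyklov_1}.

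You instead treat $A(m,t)$ as a transfer matrix acting on the row vectors $u_k=(\ell_k,\ell_{k-1},C)$. Then you need only a one-step induction, plus the fact that right multiplication by an entrywise nonnegative matrix preserves $\le$. Your version explains why a matrix product appears at all. It also handles the start cleanly: the paper's step at $n=3$ silently relies on the empty-product convention $(P_1,Q_1,R_1)=(1,0,0)$. The paper's version, in exchange, makes the recurrence satisfied by $P_n,Q_n,R_n$ explicit.

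Your closing remark is inaccurate. A column-vector induction does work, and it is exactly what the paper does; it peels off the \emph{last} factor, not the first. Write $M_n=A(m_2,t_2)\cdots A(m_n,t_n)$ and let $e_1,e_2,e_3$ be the standard basis columns. Then $M_{n+1}e_1=M_nA(m_{n+1},t_{n+1})e_1$, where $A(m,t)e_1=m e_1+|t|e_2+m e_3$. The column structure of $A$ gives $M_ne_2=M_{n-1}e_1$ and $M_ne_3=e_3$, which yields the recurrence above. So the row-vector formulation is a convenience, not something forced by the shape of the product.
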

    \begin{proof} 
    The sequences $\{P_n\}, \{Q_n\}, \{R_n\}$ satisfy the recurrence relation
    \begin{equation} \label{eq_P_Q_R}
        \begin{pmatrix}
            P_{n+1} \\
            Q_{n+1} \\
            R_{n+1}
        \end{pmatrix}
        =
        m_{n+1}
        \begin{pmatrix}
            P_n \\
            Q_n \\
            R_n
        \end{pmatrix}
        +
        |t_{n+1}|
        \begin{pmatrix}
            P_{n-1} \\
            Q_{n-1} \\
            R_{n-1}
        \end{pmatrix}
        +
        m_{n+1}
        \begin{pmatrix}
            0 \\
            0 \\
            1
        \end{pmatrix}.
    \end{equation}
    For $n = 2$, the statement of the lemma reduces to \eqref{al_neravenstvo_kompleks_cyklov}.  
    We proceed by induction from $n-1$ to $n$. Applying the induction hypothesis to inequality \eqref{al_neravenstvo_kompleks_cyklov_1}, we obtain
    \begin{gather*}
        \ell_{(m_1, t_1, \ldots, m_n, t_n)} \le \\
        m_n \left( P_{n-1} \cdot \ell_{(m_1, t_1)}
        + Q_{n-1} \cdot \ell_0
        + R_{n-1} \cdot C \right)
        + |t_n| \left( P_{n-2} \cdot \ell_{(m_1, t_1)}
        + Q_{n-2} \cdot \ell_0
        + R_{n-2} \cdot C \right)
        + m_n C.
    \end{gather*}
    Using \eqref{eq_P_Q_R}, we obtain the desired inequality.
    \end{proof}

    \subsection{Proof of Theorem \ref{th_ocenka_S_1_2}.}

    Let $y \in H_1(S_{1,2}; \mathbb{Z}) / \langle [\delta] \rangle$ be a primitive homology class 
    such that $y = x + \mathbb{Z}[\delta]$, where $\delta$ is a geodesic 
    representing a loop around a puncture of $S_{1,2}$. We construct the cycle complex $\mathcal{B}_{1,2}$ 
    for the class $y$ and identify the geodesics 
    belonging to the class $y$ with vertices of $\mathcal{B}_{1,2}$. 
    From Theorem \ref{th_o_topologii_kompleksa_ciklov}, it follows that $\mathcal{B}_{1,2}$ is a tree; 
    moreover, from the definition 
    of $\mathcal{B}_{1,2}$, it follows that geodesics corresponding to adjacent vertices are disjoint.
    Abusing terminology slightly, we shall call the length of a vertex the length of the corresponding 
    geodesic on $S_{1,2}$.

    We enumerate the vertices of $\mathcal{B}_{1,2}$ as follows. 
    Let $\alpha_0$ be a vertex of minimal length in $\mathcal{B}_{1,2}$, and let $\alpha_{(0,1)}$ be a shortest vertex adjacent to 
    $\alpha_0$ in $\mathcal{B}_{1,2}$. If the choice of $\alpha_0$ and $\alpha_{(0,1)}$ is not unique, we fix any pair satisfying the above conditions.
    Let $\Sigma_{\alpha}$ denote the surface obtained by cutting 
    $S_{1,2}$ along the geodesic $\alpha$. Note that $\alpha_0$ is non-separating;
    hence, $\Sigma_{\alpha_0}$ is homeomorphic to $S_{0,2}^{2}$. For integral multicurves 
    on $\Sigma_{\alpha_0}$, we fix the special Dehn--Thurston coordinates 
    whose construction and properties are described in Lemma \ref{le_specialnye_coordinaty_D_T}, 
    taking $\gamma_0 = \alpha_{(0,1)}$ and $\gamma_1 = \alpha_0$. 
    Note that in this case item 2 of Lemma \ref{le_specialnye_coordinaty_D_T} does not apply, since 
    we take as $\mathcal{P} = \{\gamma_0\}$ the pants decomposition corresponding to the longer, rather than the shorter, 
    of the two curves $\alpha_0$ and $\alpha_{(0,1)}$.

    From Lemma \ref{le_specialnye_coordinaty_D_T}, it follows that the curves in $\mathcal{B}_{1,2}$ adjacent to $\alpha_0$
    are encoded by the following two sets of indices:
    \begin{enumerate}
        \item $\left\{ (m_1, t_1) \in 2\mathbb{N} \times \mathbb{Z} \mid \gcd\left(\frac{m_1}{2}, t_1\right) = 1,\ t_1 -\text{ odd},\ \frac{m_1}{2} -\text{ even} \right\} \cup \{(0, 1)\}$;
        \item $\left\{ (m_1, t_1) \in 2\mathbb{N} \times \mathbb{Z} \mid \gcd\left(\frac{m_1}{2}, t_1\right) = 1,\ t_1 -\text{ odd},\ \frac{m_1}{2} -\text{ odd} \right\}$.
    \end{enumerate}
    By item 5 of Lemma \ref{le_specialnye_coordinaty_D_T}, this partition corresponds to the two 
    possible homology classes $\left[\alpha_{(0, 1)}\right] \pm [\delta]$.
    The case
    \[
        \left\{ (m_1, t_1) \in 2\mathbb{N} \times \mathbb{Z} \;\middle|\; \gcd\left(\frac{m_1}{2}, t_1\right) = 1,\ t_1 -\text{ even} \right\}
    \]
    is not taken into account, since it contains geodesics that separate the surface $S_{1,2}$. Consequently,
    all curves with even $t_1$ are homologically trivial and do not correspond to any vertex of the complex $\mathcal{B}_{1,2}$.
    Denote by $\alpha_{(m_1, t_1)}$ the geodesic corresponding to a point of $\mathcal{B}_{1,2}$ and
    having coordinates $(m_1, t_1)$. Thus we have introduced a 
    numbering of the vertices of $\mathcal{B}_{1,2}$ that are at distance $1$ from $\alpha_0$.

    \begin{remark}
    Note that here (and below), for all curves corresponding to vertices of the cycle complex $\mathcal{B}_{1,2}$,
    we always choose the orientation such that the homology classes of these curves in the group  
    $H_1(S_{1,2}; \mathbb{Z}) / \langle [\delta] \rangle$ are equal to $x + \mathbb{Z}[\delta]$. Hence, for each of 
    these curves, a specific $\sigma(\gamma)$ arises, which appears in formulas \eqref{eq_gomology_formula}.
    \end{remark}
    We now introduce a numbering for the vertices at distance $2$ from $\alpha_0$. Each such vertex 
    is adjacent to some vertex $\alpha_{(m_1, t_1)}$ and hence lies in the surface $\Sigma_{\alpha_{(m_1,t_1)}}$.
    Moreover, $\alpha_0 \subset \Sigma_{\alpha_{(m_1,t_1)}}$ for any curve $\alpha_{(m_1, t_1)}$ in $\mathcal{B}_{1,2}$.
    For each surface $\Sigma_{\alpha_{(m_1,t_1)}}$, we fix the special Dehn--Thurston coordinates (see Lemma \ref{le_specialnye_coordinaty_D_T}), 
    taking $\gamma_0 = \alpha_0$ and $\gamma_1 = \alpha_{(m_1,t_1)}$.
    For these coordinates, inequality \eqref{al_specialnye_metricheskoe_neravenstvo} holds, since $\alpha_0$ is a vertex of minimal length in $\mathcal{B}_{1,2}$.
    From Lemma \ref{le_specialnye_coordinaty_D_T}, it follows that the curves in $\mathcal{B}_{1,2}$ adjacent to $\alpha_{(m_1, t_1)}$
    are encoded by two sets of indices:
    \begin{enumerate}
    \item $\left\{ (m_2, t_2) \in 2\mathbb{N} \times \mathbb{Z} \mid \gcd\left(\frac{m_2}{2}, t_2\right) = 1,\ t_2 -\text{ odd},\ \frac{m_2}{2} -\text{ even} \right\} \cup \{(0, 1)\}$;
    \item $\left\{ (m_2, t_2) \in 2\mathbb{N} \times \mathbb{Z} \mid \gcd\left(\frac{m_2}{2}, t_2\right) = 1,\ t_2 -\text{ odd},\ \frac{m_2}{2} -\text{ odd} \right\}$.
    \end{enumerate}
    The case of even $t_2$ again corresponds to separating geodesics, so we only consider odd $t_2$.
    Denote by $\alpha_{(m_1, t_1, m_2, t_2)}$ the geodesics belonging to the surface $\Sigma_{\alpha_{(m_1, t_1)}}$ and having coordinates $(m_2, t_2)$.
    Note that the geodesics $\alpha_{(m_1, t_1, 0, 1)}$ coincide with $\alpha_0$, so we do not include them.
    Since $\mathcal{B}_{1,2}$ has no cycles, there are no other index sets corresponding to the same vertex.  
    Thus, the vertices at distance $2$ from $\alpha_0$ are in bijection with the following set of indices: 
    \[\left\{(m_1, t_1, m_2, t_2) \in \left(2 \mathbb{N} \times \mathbb{Z} \cup \{(0, 1)\} \right)\times 2 \mathbb{N} \times \mathbb{Z} \mid 
    \gcd\left(\frac{m_i}{2}, t_i\right) = 1, \ t_i -\text{ odd}\right\}.\]    Let $\ell_{(m_1, t_1, \ldots, m_n, t_n)}$ denote the length of $\alpha_{(m_1, t_1, \ldots, m_n, t_n)}$, and let 
    $\ell_0$ denote the length of $\alpha_0$.
    Then from Lemma \ref{le_vypuklost_B} we obtain the inequality
    \[
        \ell_{(m_1, t_1)} \le \frac{\ell_0 + \ell_{(m_1, t_1, m_2, t_2)}}{2}.
    \]
    Since $\alpha_0$ is shortest, we have $\ell_0 \le \ell_{(m_1, t_1, m_2, t_2)}$.
    Combining this with the previous inequality yields
    \[
        \ell_{(m_1, t_1)} \le \ell_{(m_1, t_1, m_2, t_2)}.
    \]

    Similarly to the vertices at distance $2$ from $\alpha_0$, we describe all remaining vertices of $\mathcal{B}_{1,2}$.
    Suppose that for vertices at distance $n \ge 2$ from $\alpha_0$, a numbering $\alpha_{(m_1, t_1, \ldots, m_n, t_n)}$ has been fixed, and the inequality
    \[
        \ell_{(m_1, t_1, \ldots, m_{n-1}, t_{n-1})} \le \ell_{(m_1, t_1, \ldots, m_n, t_n)}
    \]
    holds.
    Then each geodesic adjacent to $\alpha_{(m_1, t_1, \ldots, m_n, t_n)}$ lies on the surface 
    $\Sigma_{\alpha_{(m_1, t_1, \ldots, m_n, t_n)}}$. Since $\alpha_{(m_1, t_1, \ldots, m_n, t_n)}$ is 
    non-separating on $S_{1,2}$, the surface $\Sigma_{\alpha_{(m_1, t_1, \ldots, m_n, t_n)}}$ is again 
    homeomorphic to $S_{0,2}^{2}$. For $\Sigma_{\alpha_{(m_1, t_1, \ldots, m_n, t_n)}}$, we fix
    the special Dehn--Thurston coordinates, taking 
    \[\gamma_0 = \alpha_{(m_1, t_1, \ldots, m_{n-1}, t_{n-1})} 
    \text{ and } \gamma_1 = \alpha_{(m_1, t_1, \ldots, m_n, t_n)}.\]
    In view of the inequality
    \[
        \ell_{(m_1, t_1, \ldots, m_{n-1}, t_{n-1})} \le \ell_{(m_1, t_1, \ldots, m_n, t_n)},
    \]
    for integral multicurves on the surface $\Sigma_{\alpha_{(m_1, t_1, \ldots, m_n, t_n)}}$,
    the inequality \eqref{al_specialnye_metricheskoe_neravenstvo} holds.
    As before, we denote the vertices at distance $n+1$ from $\alpha_0$ by
    $\alpha_{(m_1, t_1, \ldots, m_{n+1}, t_{n+1})}$, where $m_{n+1}$ is even and positive,
    $t_{n+1}$ is odd and integral, and $\gcd\left(\frac{m_{n+1}}{2}, t_{n+1}\right) = 1$.
    Then from Lemma \ref{le_vypuklost_B} and the inequality
    $\ell_{(m_1, t_1, \ldots, m_{n-1}, t_{n-1})} \le \ell_{(m_1, t_1, \ldots, m_n, t_n)}$,
    it follows that
    \[
        \ell_{(m_1, t_1, \ldots, m_n, t_n)} \le \ell_{(m_1, t_1, \ldots, m_{n+1}, t_{n+1})}.
    \]
    Since $\mathcal{B}_{1,2}$ has no cycles and the case $m_{n+1} = 0$ is excluded, the vertices
    of $\mathcal{B}_{1,2}$ at distance $n$ from $\alpha_0$ are in
    bijection with the following set of indices $(m_1, t_1, \ldots, m_n, t_n)$:
    \begin{enumerate}
        \item $(m_1, t_1) \in 2\mathbb{N} \times (2\mathbb{Z} + 1) \cup \{(0, 1)\}$;
        \item $(m_i, t_i) \in 2\mathbb{N} \times (2\mathbb{Z} + 1)$ for $i \ge 2$;
        \item $\gcd\left(\frac{m_i}{2}, t_i\right) = 1$.
    \end{enumerate}
    At any step except the first, the inequality $\ell_{\gamma_0} \le \ell_{\gamma_1}$ holds,
    so formula \eqref{al_specialnye_metricheskoe_neravenstvo} applies.
    Hence, for $n = 2$, we have
    \[
        \ell_{(m_1, t_1, m_2, t_2)} \le m_2 \ell_{(m_1, t_1)} + |t_1| \ell_0 + m_2 C;
    \]
    and for any $n \ge 3$,
    \[
        \ell_{(m_1, t_1, \ldots, m_n, t_n)} \le m_n \ell_{(m_1, t_1, \ldots, m_{n-1}, t_{n-1})}
        + |t_n| \ell_{(m_1, t_1, \ldots, m_{n-2}, t_{n-2})}
        + m_n C,
    \]
    where the constant $C$ is independent of $n$.

    From Lemma \ref{le_neravenstvo_kompleks_cyklov}, we obtain the inequalities
    \[
        \ell_{(m_1, t_1, \ldots, m_n, t_n)} \le
        P_n \cdot \ell_{(m_1, t_1)}
        + Q_n \cdot \ell_0
        + R_n \cdot C,
    \]
    where $P_n, Q_n, R_n$ are defined by
    \begin{equation*}
        \begin{pmatrix}
            P_n \\
            Q_n \\
            R_n
        \end{pmatrix}
        =
        A(m_2, t_2) \cdots A(m_n, t_n)
        \begin{pmatrix}
            1 \\
            0 \\
            0
        \end{pmatrix}.
    \end{equation*}

    \[
        \ell_{(m_1, t_1, \ldots, m_n, t_n)} \le \max\{C, \ell_0, \ell_{(m_1, t_1)}\} \left( P_n + Q_n + R_n \right).
    \]
    Let $A_n$ denote the matrix $A(m_n, t_n)$, and let $e_1$ be the vector $(1, 0, 0)^t$.
    Then
    \begin{align*}
        \max\{C, \ell_0, \ell_{(m_1, t_1)}\} (P_n + Q_n + R_n)
        &= \max\{C, \ell_0, \ell_{(m_1, t_1)}\} \| A_2 \cdots A_n e_1 \| \le\\
         \le \max\{C, \ell_0, \ell_{(m_1, t_1)}\} \| A_2 \cdots A_n \| \| e_1 \| 
        &= \max\{C, \ell_0, \ell_{(m_1, t_1)}\} \| A_2 \cdots A_n \|,
    \end{align*}
    where we use the $L^1$-norm of a vector, $\|v\| = |v_1| + |v_2| + |v_3|$, and the corresponding operator norm
    \[
        \|A\| = \max_{1 \le j \le 3} \sum_{i=1}^{3} |a_{ij}|.
    \]
    Thus, we obtain the following inequality:
    \begin{align} \label{al_ocenka_dliny_v_B_1_2}
        \ell_{(m_1, t_1, \ldots, m_n, t_n)} \le \max\{C, \ell_0, \ell_{(m_1, t_1)}\} \| A_2 \cdots A_n \|.
    \end{align}

    It remains to determine the restriction on the parameters that guarantees the equality
    $\left[\alpha_{(m_1, t_1, \ldots, m_n, t_n)}\right] = x$.
    Since the numbering was constructed using special Dehn--Thurston coordinates, from item 5 of
     Lemma \ref{le_specialnye_coordinaty_D_T} we have
    \begin{align*}
        \left[\alpha_{(m_1, t_1)}\right] &= \left[\alpha_0\right] + (-1)^{\frac{m_1}{2}} \sigma(\alpha_0) [\delta], \\
        \left[\alpha_{(m_1, t_1, \ldots, m_{n+1}, t_{n+1})}\right] &= \left[\alpha_{(m_1, t_1, \ldots, m_n, t_n)}\right] 
        + (-1)^{\frac{m_{n+1}}{2}} \sigma\left(\alpha_{(m_1, t_1, \ldots, m_n, t_n)}\right) [\delta],
    \end{align*}
    where the function $\sigma$ takes values in $\{-1, 1\}$. Moreover, it was noted in the construction of 
    the numbering that $\alpha_0 = \alpha_{(m_1, t_1, 0, 1)}$ and $\alpha_{(m_1, t_1, \ldots, m_n, t_n)} = 
    \alpha_{(m_1, t_1, \ldots, m_{n+1}, t_{n+1}, 0, 1)}$.
       
    Hence,
    \begin{align*}
        [\alpha_0] &= [\alpha_{(m_1, t_1)}] + \sigma(\alpha_{(m_1, t_1)})[\delta], \\
        [\alpha_{(m_1, t_1, \ldots, m_n, t_n)}] &=
        [\alpha_{(m_1, t_1, \ldots, m_{n+1}, t_{n+1})}] + \sigma(\alpha_{(m_1, t_1, \ldots, m_{n+1}, t_{n+1})})[\delta].
    \end{align*}
    From these and the previous equalities, one easily obtains
    \begin{align*}
        \sigma(\alpha_{(m_1, t_1, \ldots, m_n, t_n)}) &= \sigma(\alpha_0) \prod_{k=1}^n (-1)^{\frac{m_k}{2} + 1}, \\
        [\alpha_{(m_1, t_1, \ldots, m_n, t_n)}] &=
        [\alpha_0] - \sigma(\alpha_0) \left( \sum_{k=1}^{n} (-1)^{\sum_{j=1}^k \left( \frac{m_j}{2} + 1 \right)} \right) [\delta].
    \end{align*}

    Consider the case where $[\alpha_0] = x$. Let $q_k = \sum_{j=1}^k \left( \frac{m_j}{2} + 1 \right)$. Then for the equality
    \[
        [\alpha_0] = [\alpha_0] - \sigma(\alpha_0) \left( \sum_{k=1}^{n} (-1)^{q_k} \right) [\delta]
        = [\alpha_{(m_1, t_1, \ldots, m_n, t_n)}]
    \]
    to hold, it is necessary and sufficient that $n$ be even and that among the numbers $q_k$, exactly $\frac{n}{2}$ are even and $\frac{n}{2}$ are odd.
    By definition, the $q_k$ are related to the $m_i$ as follows:

    \begin{equation*}
        \begin{pmatrix}
            &q_1&\\
            &q_2&\\
            &q_3&\\
            & \vdots &\\
            &q_n&\\
            \end{pmatrix}=
            \begin{pmatrix}
            &1 & 0 & 0 & \ldots & 0\\
            &1 & 1 & 0 & \ldots & 0\\
            &1 & 1 & 1 & \ldots & 0\\
            & \cdots & \cdots & \cdots & \ddots  & 0\\
            &1 & 1 & 1 & \ldots & 1\\
        \end{pmatrix}
        \begin{pmatrix}
            &\frac{m_1}{2} + 1&\\
            &\frac{m_2}{2} + 1&\\
            &\frac{m_3}{2} + 1&\\
            & \vdots &\\
            &\frac{m_n}{2} + 1&\\
            \end{pmatrix}
    \end{equation*}
    Note that the matrix in the relation above is invertible over $\mathbb{Z}/2\mathbb{Z}$; hence, the row
    $\left( \frac{m_1}{2}, \ldots, \frac{m_n}{2} \right)$
    is in bijection with the row $(q_1, \ldots, q_n)$ up to parity.
    Therefore, for the equality
    $\left[\alpha_{(m_1, t_1, \ldots, m_n, t_n)}\right] = x$
    to hold, it is necessary and sufficient that
    $\alpha_{(m_1, t_1, \ldots, m_n, t_n)}$ correspond to a vertex in the complex $\mathcal{B}_{1,2}$,
    that $n$ be even, and that the parities of the row
    $\left( \frac{m_1}{2}, \ldots, \frac{m_n}{2} \right)$
    correspond to the parities of some row $(q_1, \ldots, q_n)$ in which exactly half of the entries are
    even and half are odd.
    
    From the above and estimate (\ref{al_ocenka_dliny_v_B_1_2}), it follows that
    \[
        h_{1,2}(L, x) \ge \sum_{(m_1, t_1)} \sum_{n=1}^{\infty} \mathcal{Z}_n\left( \frac{L}{\max\{C, \ell_0, \ell_{(m_1, t_1)}\}} \right)
        \ge \sum_{n=1}^{\infty} \mathcal{Z}_n\left( \frac{L}{\max\{C, \ell_0, \ell_{(0, 1)}\}} \right),
    \]
    where $\mathcal{Z}_n$ is defined in Theorem \ref{th_reshenie_combinatornoy_zadachi}.
    We complete the proof in the case $[\alpha_0] = x$ by applying Theorem \ref{th_reshenie_combinatornoy_zadachi}.
    The general case reduces to the case $[\alpha_0] = x$ by means of Theorem \ref{th_ne_zavisimosti_assimptotik_ot_klassa_gomologiy}.

\end{document}